\newtheorem{theorem}{Theorem}[section]
\newtheorem{lemma}[theorem]{Lemma}
\theoremstyle{remark}
\theoremstyle{definition}
\newtheorem*{notation}{Notation}
\newtheorem*{example}{Example}
\newcommand{\Sq}{\mathcal{S}_q}
\newcommand{\centerv}
[2] 
[0cm] 
{
 \raisebox{-0.5\height + #1} {#2}
}
\title{Steinberg skein identities at roots of unity}
\date{}
\author[Higgins]{Vijay Higgins}
\address{Department of Mathematics, University of California, Los Angeles, CA 90095, USA}
\email{higginsv@math.ucla.edu}
\author[Tambe]{Indraneel Tambe}
\address{Department of Mathematics, University of California, Los Angeles, CA 90095, USA}
\email{indraneel@math.ucla.edu}
\thanks{
	2020 {\em Mathematics Classification:} Primary 57K31. Secondary 17B37.\\
	{\em Key words and phrases: Skein modules at roots of unity, Steinberg module, Jones-Wenzl idempotents, Frobenius homomorphism}}
\begin{document}

\begin{abstract}
We obtain a family of skein identities in the Kauffman bracket skein module which relate Frobenius elements to Jones-Wenzl projectors at roots of unity. We view these skein identities as certain incarnations of Steinberg tensor product formulae from the theory of tilting modules of the quantum group $U_q(sl_2)$. We show that the simplest skein identities yield a short new proof of the existence of the Chebyshev-Frobenius homomorphism of Bonahon-Wong.
\end{abstract}

\maketitle

\section{Introduction}
The Kauffman bracket skein module $\mathcal{S}_q(M)$ of an oriented $3\text{-manifold}$ $M$ was introduced by Przytycki and Turaev \cite{Prz91,Tur88} as a generalization of the Jones polynomial to links in $M.$ The skein module is spanned by unoriented framed links in $M$ modulo the Kauffman bracket skein relations. These skein relations are the same ones defining the Temperley-Lieb ribbon category which diagrammatically encodes the subcategory of representations of the quantum group $U_q(sl_2)$ monoidally generated by the defining $2\text{-dimensional}$ representation $V.$

When $q$ is not a root of unity, the whole representation category of $U_q(sl_2)$ may be obtained by taking the idempotent completion of the Temperley-Lieb category, with the $k+1$ dimensional irreducible representation $V_k=\text{Sym}^k(V)$ corresponding to $JW_k,$ the Jones-Wenzl idempotent on $k$ strands. On the other hand, when $q$ is a root of unity, idempotent completion of the Temperley-Lieb category yields the category of tilting modules of $U_q(sl_2),$ which are those modules that appear as direct summands of some tensor power $V^{\otimes k}$ of the defining representation \cite{Eli15}. In this way, one might view the Kauffman bracket skein module at a root of unity as being governed by the skein theory of tilting modules of $U_q(sl_2).$ However, one of the most surprising and important tools in studying skein modules at roots of unity, the Chebyshev-Frobenius homomorphism of Bonahon-Wong \cite{BW16}, has been come to be understood as involving strands in the Kauffman bracket skein module labeled by a certain non-tilting module, the image $Fr(V)$ of the standard representation $V$ under the so-called Frobenius functor of Lusztig \cite{Lus90}.

Our goal is to gain some understanding of the relationship between the Frobenius elements appearing in the skein module and the category of tilting modules governing the Temperley-Lieb category. To this end, we obtain skein identities involving both Frobenius elements and Jones-Wenzl projectors at roots of unity. We view the identities as certain incarnations of Steinberg tensor product formulae of Andersen-Wen \cite{AW92}. As an application, we show that the easiest skein identities yield a new proof of the existence of the Chebyshev-Frobenius homomorphism of Bonahon-Wong. All of our proofs use elementary and straightforward skein theoretic techniques which we estimate will generalize to skein theories for $\mathrm{SL}_d$ for $d>2$ to find alternate ways other than those of \cite{Hig25,KLW25} to study higher rank $\mathrm{SL}_d$ Frobenius homomorphisms. We hope the more complicated identities we prove can be used in the future to help compute the image of the Frobenius homomorphism on a web rather than a knot. Additionally, following \cite{STWZ23} our skein identities may be useful for studying modified traces \cite{GKPM11} and TQFTs in the non-semisimple setting.

\subsection{Main results}

In our setting, $q\in \mathbb{C}$ is a root of unity and $n$ is the smallest positive integer such that $q^n \in \{-1,1\}.$ The Jones-Wenzl projectors $JW_k$ exist for $1 \leq k \leq n-1$, as is evident from the well-known recursive construction of Jones-Wenzl projectors (see Equation (\ref{JW recursion})) . The element $JW_n$ does not exist, but the elements $JW_{mn-1}$ exist for all $m \geq 1$ \cite{GW93,STWZ23,MS22}.

We also recall that if $t=q^{n^2},$ then the Frobenius homomorphism $\mathcal{S}_t(M) \rightarrow \mathcal{S}_q(M)$ of Bonahon-Wong \cite{BW16} sends a framed knot $K$ to the threading $K^{[T_n]}$ of the Chebyshev polynomial $T_n$ along the knot. In the setting of Muller skein algebras \cite{Mul16} or stated skein algebras \cite{Le18,CL19}, a version of the Frobenius homomorphism sends an arc $\alpha$ to its $n^{\text{th}}$ framed power $\alpha^{(n)}$ \cite{LP19,BL22,KQ19}. From the works \cite{KQ19,BL22,GJS25, BR22} one can view either $K^{[T_n]}$ or $\alpha^{(n)}$ as an element colored by the Frobenius representation $Fr(V)$ of $U_q(sl_2)$ at the root of unity $q.$ These observations inspire us to obtain the following skein identities which relate these Frobenius elements to Jones-Wenzl projectors at roots of unity.

\begin{theorem}\label{intro Steinberg skein identities}(Theorem \ref{Steinberg skein identities})
When $q$ is a root of unity and $n$ is the smallest positive integer such that $q^n \in \{-1,1\},$ the following local skein identities hold in the Kauffman bracket skein module $\mathcal{S}_q(M)$ of any oriented $3\text{-manifold}$ $M.$

\begin{equation*}
\begin{tikzpicture}[baseline=6ex]
\draw [ultra thick] (0,0)--(0,2);
\node [draw, minimum width=.75cm, ultra thick, fill = white] (node) at (0,1) {$n-1$};
\draw [ultra thick] (.9,.25)--(.9,1.75);
\draw [ultra thick] (.9,1.75) arc (180:0:.25);
\draw [ultra thick] (1.4,1.75)--(1.4,.25);
\draw [ultra thick] (.9,.25) arc (-180:0:.25);
\filldraw [fill=white, draw=black] (1.15,1) circle [radius=.075];
\node (node) at (.9,1.75) [left] {$T_n$};
\end{tikzpicture}=
\begin{tikzpicture}[baseline=6ex]
\draw [ultra thick] (0,0)--(0,2);
\draw [ultra thick] (1.4,1.75) arc (0:90:.25);
\draw [ultra thick] (1.15,2) -- (.65,2);
\draw [ultra thick] (.65,2) arc (90:180:.25);
\draw [ultra thick] (.4,1.75)--(.4,.25);
\draw [ultra thick] (1.4,1.75)--(1.4,.25);
\draw [ultra thick] (1.4,.25) arc (0:-90:.25);
\draw [ultra thick] (1.15,0)--(.65,0);
\draw [ultra thick] (.65,0) arc (-90:-180:.25);
\node [draw, minimum width=.75cm, ultra thick, fill = white] (node) at (0,1) {$2n-1$};
\filldraw [fill=white, draw=black] (1.15,1) circle [radius=.075];
\node (node) at (.9,1.75) [right] {$n$};
\end{tikzpicture}
\text{ and }
\begin{tikzpicture}[baseline=6ex]
\draw [ultra thick] (0,0)--(0,2);
\node [draw, minimum width=.75cm, ultra thick, fill = white] (node) at (0,1) {$n-1$};
\draw [ultra thick] (.9,.25)--(.9,1.75);
\node (node) at (.9,1) [right] {$n$};
\node [draw, minimum width=.5cm, ultra thick, fill = white] (node) at (.9,1.75) {};
\node [draw, minimum width=.5cm, ultra thick, fill = white] (node) at (.9,.25) {};
\end{tikzpicture}=
\begin{tikzpicture}[baseline=6ex]
\draw [ultra thick] (-.25,0)--(-.25,2);
\draw [ultra thick] (.9, 1.75)--(.25,1);
\draw [ultra thick] (.9,.25)--(.25,1);
\node [draw, minimum width=.75cm, ultra thick, fill = white] (node) at (0,1) {$2n-1$};
\node [draw, minimum width=.5cm, ultra thick, fill = white] (node) at (.9,1.75) {};
\node [draw, minimum width=.5cm, ultra thick, fill = white] (node) at (.9,.25) {};
\node (node) at (.55,1.4) [right] {$n$};
\node (node) at (.55,.6) [right] {$n$};
\end{tikzpicture}. 
\end{equation*}
\end{theorem}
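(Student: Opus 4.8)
Conceptually these identities are the skein-theoretic shadow of the Steinberg tensor product formula of \cite{AW92}: $JW_{n-1}$ is the projector onto the Steinberg module, the $T_n$-threaded loop is the Frobenius twist $Fr(V)$, and $JW_{2n-1}$ is the projector onto their tensor product. The proof I would give, though, stays entirely inside skein theory and uses only three ingredients: (i) the recursive construction of the projectors $JW_k$ for $k\le n-1$ (Equation~(\ref{JW recursion})) and of $JW_{2n-1}$ from \cite{GW93,STWZ23,MS22}; (ii) the facts that, when $JW_j$ exists, any element on $j$ strands annihilated by every turnback is a scalar multiple of $JW_j$, and that $JW_N$ absorbs a nested $JW_k\otimes 1^{\otimes(N-k)}$ for $k\le N$; and (iii) the Chebyshev three-term recursion $T_{k+1}=XT_k-T_{k-1}$ governing the threadings. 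Both displayed identities are assertions about $JW_{2n-1}$: in the first it is traced down, in the second it is kept at the level of morphisms.

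For the first identity, the right-hand side is the partial closure $\mathrm{tr}_n(JW_{2n-1})$ obtained by capping off $n$ of the $2n-1$ strands, read as an element on the remaining $n-1$ strands. Since capping commutes with multiplying by turnbacks on the untouched strands and $e_i JW_{2n-1}=JW_{2n-1}e_i=0$, the element $\mathrm{tr}_n(JW_{2n-1})$ is annihilated by every turnback on $n-1$ strands, hence equals $c\cdot JW_{n-1}$ for a scalar $c$ by (ii). The left-hand side is visibly $T_n(\Delta_1)\cdot JW_{n-1}$, where $\Delta_1$ is the value of the unknot, because the $T_n$-threaded loop is split off and contributes only its scalar value. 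Since $JW_{n-1}\neq 0$, the identity reduces to the scalar equality $c=T_n(\Delta_1)$. The subtle point is that at our root of unity the $JW_{n-1}$-colored unknot evaluates to $0$, so $c$ cannot be extracted by closing the remaining strands; instead I would read $c$ off the explicit recursion for $JW_{2n-1}$ --- equivalently, isolate the coefficient of one conveniently chosen Temperley--Lieb diagram after the partial closure --- and recognize the resulting combination of vanishing quantum integers, by way of the same recursion that defines $T_n$, as exactly $T_n(\Delta_1)$.

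The second identity is the morphism-level refinement in which the $n$-strand Frobenius bundle --- the local model of the element $Fr(V)$, i.e.\ the $T_n$-threading --- is not traced off but kept as the pictured coupon and sandwiched against $JW_{2n-1}$, with the remaining $n-1$ strands passing straight through. Here I would use the absorption property of $JW_{2n-1}$, namely that it absorbs $JW_{n-1}$ applied to $n-1$ consecutive strands on either side, to insert a $JW_{n-1}$ at no cost, then expand $JW_{2n-1}$ via its recursion and collapse every correction term using turnback annihilation, so that the right-hand side reduces to $JW_{n-1}$ placed alongside the Frobenius coupon of the left-hand side. In both parts I expect the main obstacle to be the root-of-unity degeneracy that animates the whole subject: because the quantum integers $[n]$, $[2n]$ and the $JW_{n-1}$- and $JW_{2n-1}$-colored unknots all vanish, none of the relevant scalars survive the usual ``evaluate the closed diagram'' shortcut, and the real work is to push the Chebyshev recursion through the explicit construction of $JW_{2n-1}$ and check that the vanishing quantum integers conspire to leave precisely the Chebyshev data --- the ``miraculous cancellation'' of Bonahon--Wong, here made to fall out of the idempotent calculus. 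Once the displayed case is settled, the analogous identities for $JW_{mn-1}$, $m\ge 3$, should follow by induction on $m$ mirroring the three-term recursion, and the $m=2$ case is the easiest one, from which the new proof of the Chebyshev--Frobenius homomorphism of \cite{BW16} is then extracted.
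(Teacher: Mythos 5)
Your sketch of the second identity matches the paper's argument: expand $JW_{2n-1}$ on the right-hand side by the explicit formula (\ref{JW_2n-1}) in terms of $JW_{n-1}$'s (note there is no usable three-term recursion for $JW_{2n-1}$ at the root of unity, since $JW_{2n-2}$ need not exist; the expansion must be the closed formula $A_0+\sum_k(-1)^k(A_k+A_k')$), then absorb and kill every capped term, leaving only $A_0$. That part is fine. The problem is your reading of the first identity. The dot inside the $T_n$-labelled loop means the loop encircles a region of $M$ that need not be contractible: the identity asserts that for an \emph{arbitrary} framed knot $K$, closing $n$ of the strands of $JW_{2n-1}$ along $K$ yields $JW_{n-1}$ disjoint from the threading $K^{[T_n]}$. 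This is a polynomial identity in the skein module of the annulus, not the scalar identity $c=T_n(\Delta_1)$ you reduce to. Your step ``the $T_n$-threaded loop is split off and contributes only its scalar value'' is exactly what fails, and the loss is fatal for the intended application: the full annular statement is what feeds into the new proof of the Chebyshev--Frobenius homomorphism, whereas the contractible special case carries almost no information (indeed $T_n(\Delta_1)=\mp 2$ and the closed $JW_{n-1}$ vanishes, as you note).

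Correcting for this, your uniqueness argument would have to be upgraded: the partial closure along $K$ is killed by all turnbacks on the remaining $n-1$ strands, so it equals $JW_{n-1}\sqcup K^{[P]}$ for some polynomial $P\in\mathbb{C}[x]$ in the annulus skein, and the real work is to compute every coefficient of $P$, not to ``isolate the coefficient of one conveniently chosen diagram.'' The paper does this by closing each diagram $A_j$ of (\ref{JW diagram notation}) around the annulus, applying Lemma \ref{telescoping} to each term, observing that the alternating sum over $j$ telescopes to $-S_{n-2}$ threaded along $K$ (and the same again for the $A_j'$ by symmetry), adding the contribution $xS_{n-1}$ of $A_0$, and invoking $T_n=xS_{n-1}-2S_{n-2}$ from (\ref{Tk and Sk}). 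Your instinct that the Chebyshev recursion must emerge from the explicit construction of $JW_{2n-1}$ is right, but as written the proposal neither states the correct form of the claim nor supplies the telescoping mechanism that actually produces $T_n$.
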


Here, the blank boxes represent Jones-Wenzl projectors on an unspecified number of strands. The dot in the first identity represents a region in $M$ that might not be contractible. The knot labeled by $T_n$ represents the knot threaded by the Chebyshev polynomial $T_n(x)$; see Section \ref{KBSM}. We view the skein identities as incarnations of the following Steinberg tensor product formula of Andersen-Wen \cite{AW92}; see Section \ref{Quantum group root of unity}.

\begin{equation*}
V_{n-1} \otimes Fr(V) \cong V_{2n-1}.
\end{equation*}

After Bonahon and Wong originally discovered the Chebyshev-Frobenius homomorphism using their quantum trace map \cite{BW11,BW16}, L\^{e} found a skein-theoretic proof of its existence \cite{Le15}. The homomorphism was used to characterize the centers and much of the representation theory of skein algebras at roots of unity in \cite{BW16,FKBL19,GJS25}. We show that our skein identities provide a short alternative proof of the homomorphism's existence.

\begin{theorem}(Theorem \ref{Chebyshev-Frobenius})
Our skein identities of Theorem \ref{intro Steinberg skein identities} imply the existence of the Chebyshev-Frobenius homomorphism of \cite{BW16}, yielding a new proof.
\end{theorem}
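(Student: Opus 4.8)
The plan is to deduce the homomorphism directly from the skein identities, via a short reduction. Recall that $\mathcal{S}_t(M)$ is the quotient of the free $\mathbb{C}$-module on isotopy classes of framed links in $M$ by the two families of local Kauffman relations at the parameter $t=q^{n^2}$: for each Kauffman triple --- three framed links identical outside a ball and equal inside it to a crossing $L_+$ and its two smoothings $L_0,L_\infty$ --- one imposes $L_+=t\,L_0+t^{-1}L_\infty$, and a contractible unknot split off from a link $L$ is set equal to $(-t^2-t^{-2})L$. Since threading the Chebyshev polynomial $T_n$ along each component is a canonical, isotopy-equivariant operation, $L\mapsto L^{[T_n]}$ is already a well-defined linear map from the free module on framed links in $M$ into $\mathcal{S}_q(M)$, with no Reidemeister bookkeeping needed at this stage. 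The entire task is to show that this map annihilates the two families of $t$-relations, after which it descends to the sought homomorphism $\Phi\colon\mathcal{S}_t(M)\to\mathcal{S}_q(M)$; by construction $\Phi$ is the Bonahon--Wong assignment $K\mapsto K^{[T_n]}$, and the argument bypasses the quantum trace map.

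The unknot relation is immediate. A contractible unknot threaded by $T_n$ equals the scalar $T_n(-q^2-q^{-2})$ in $\mathcal{S}_q(M)$ (expand $T_n$ in powers and use that $i$ disjoint contractible unknots contribute $(-q^2-q^{-2})^i$), so that relation is carried to the numerical identity $T_n(-q^2-q^{-2})=-t^2-t^{-2}$. With $t=q^{n^2}$ and $q^n\in\{1,-1\}$ this is an elementary property of the Chebyshev polynomials of the first kind, which I would verify by hand, attending to the sign normalization of $T_n$ and to the dependence on whether $q^n=1$ or $q^n=-1$.

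For the crossing relation one uses that threading is local. If $L_+,L_0,L_\infty$ agree outside a ball $B$ and inside $B$ are the three standard tangles, then each $L_\bullet^{[T_n]}$ is obtained by gluing the common threaded exterior into $B$ filled by the threading of the corresponding tangle, so $L_+^{[T_n]}=t\,L_0^{[T_n]}+t^{-1}L_\infty^{[T_n]}$ reduces to a single local skein identity in $\mathcal{S}_q$ of a ball with the relevant marked boundary data: the $T_n$-threaded crossing equals $t$ times the $T_n$-threaded $0$-smoothing plus $t^{-1}$ times the $T_n$-threaded $\infty$-smoothing, together with the mirror identity for the opposite crossing. This is where Theorem~\ref{intro Steinberg skein identities} does the work. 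The $T_n$-threaded strand is the skein-theoretic shadow of the Frobenius module $Fr(V)$, and the two skein identities there --- incarnations of $V_{n-1}\otimes Fr(V)\cong V_{2n-1}$ --- rewrite configurations of $T_n$-threaded strands in terms of the genuine Jones--Wenzl projectors $JW_{n-1}$ and $JW_{2n-1}$, which do exist at this root of unity. Feeding this dictionary into the usual recursive Jones--Wenzl calculus --- twist coefficients, partial closures, and the Chebyshev recursion --- one computes the $T_n$-threaded crossing and reads off the Kauffman relation at $t$. The possibly non-contractible region in the first identity is precisely what lets this run uniformly, covering the cases where the two strands meeting at the crossing lie on different components or are linked with the rest of the link.

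Finally, framed Reidemeister~I need not be checked separately: applying the local crossing identity to a curl, together with the unknot computation, forces a threaded kink to equal $-t^{\pm3}$ times the threaded strand, matching the framing relation in $\mathcal{S}_t(M)$. I expect the real difficulty to be the scalar bookkeeping in the step involving $JW_{n-1}$ and $JW_{2n-1}$: one must fix compatible normalizations for the Frobenius threading, for the trivalent vertices and partial closures encoded in the two identities, and for the ribbon twist on each projector, and then confirm that the surviving coefficients come out as exactly $t^{\pm1}$ rather than $t^{\pm1}$ times a unit or sign depending on the parity of $n$ and on whether $q^n$ equals $1$ or $-1$. Once those normalizations are pinned down, the remaining argument is short and formal.
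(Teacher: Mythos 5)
Your overall architecture matches the paper's: check that $L\mapsto L^{[T_n]}$ kills the two defining relations of $\mathcal{S}_t(M)$, handle the unknot relation by evaluating $T_n$ at the loop value, and make the Steinberg identities carry the crossing relation. But there is a genuine gap in the crossing step. The identities of Theorem \ref{intro Steinberg skein identities} have the form $JW_{n-1}\otimes(\text{Frobenius strand})=JW_{2n-1}(\cdots)$: they can only be applied when a copy of $JW_{n-1}$ sits next to the $T_n$-threaded strand. In the bare local picture of two $T_n$-threaded strands meeting at a crossing there is no Jones--Wenzl projector anywhere, so your plan to ``feed this dictionary into the usual recursive Jones--Wenzl calculus'' has no way to start. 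The paper's proof supplies the missing mechanism: it reduces to a thickened \emph{punctured} surface $\Sigma$, closes $JW_{n-1}$ up around the puncture, proves via a leading-term argument in the multicurve basis (Lemma \ref{Steinberg loop}, combined with Equation (\ref{JW closed})) that this closed element is not a zero divisor in $\Sq(\Sigma)$, multiplies the desired crossing identity by it, applies the green-strand Steinberg identity (\ref{(green strand Steinberg identity)}) four times to convert every $T_n$-threaded arc into an $n$-cable through $JW_{2n-1}$, and then invokes the separately proved Lemma \ref{lem:ncross} (the resolution of a crossing of two $n$-cables attached to Jones--Wenzl projectors, with coefficients $t^{\pm 1/2}$), whose own proof again runs through the Steinberg identities and the explicit formula (\ref{JW_2n-1}) for $JW_{2n-1}$. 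None of the ingredients in this chain --- the puncture, the non-zero-divisor cancellation, or the cabled-crossing lemma --- appears in your proposal, and without them the reduction you describe does not go through.

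Two smaller points. First, your normalizations do not match the paper's: with the Kauffman relations (\ref{Kauffman bracket relations}) the unknot is $-(q+q^{-1})$, so the relation to verify is $T_n(-q-q^{-1})=-(t+t^{-1})$ and the crossing coefficients are $t^{\pm 1/2}$ with $t^{1/2}=(q^{1/2})^{n^2}$; as written, your identity $T_n(-q^2-q^{-2})=-t^2-t^{-2}$ is actually false for even $n$ under the hypothesis $q^n\in\{-1,1\}$. Second, treating the crossing relation as a single identity ``in a ball'' glosses over the fact that threading $T_n$ is an operation on whole components, not on tangles; the paper's passage to a punctured surface and the zero-divisor lemma is precisely what lets one manipulate the local picture rigorously.
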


We introduce other Frobenius elements $\widetilde{JW_k}$ in skein modules (see Section \ref{thick JW}), which we call thick Jones-Wenzl elements, using a certain local notation involving green strands which is modeled on the result of applying the Frobenius homomorphism to a neighborhood containing an ordinary Jones-Wenzl projector $JW_k.$ Let $\widehat{k}=n-1+kn.$ Using the elements $\widetilde{JW}_k$ we obtain formulas for the projectors $JW_{\widehat{k}}$ for $k>1.$ We then prove the following skein identities, which provide relationships between these Frobenius elements and the Jones-Wenzl projectors $JW_{n-1}$ and $JW_{\widehat{k}}.$

\begin{theorem}\label{Intro other Steinberg skein identities}(Theorem \ref{other Steinberg skein identities})
Suppose $q$ is a root of unity and $n$ is the smallest positive integer such that $q^{n} \in \{-1,1\}.$ Let $k \geq 1$ and set $\widehat{k}=n-1+kn.$ The following skein identities hold in $\Sq(M)$ for any oriented $3\text{-manifold}$ $M.$

\begin{equation*}
\begin{tikzpicture}[baseline=6ex]
\draw [ultra thick] (0,0)--(0,2);
\draw [ultra thick, color=ForestGreen] (1.4,0)--(1.4,2);
\draw [ultra thick, color=ForestGreen] (1.1,0)--(1.1,2);
\draw [ultra thick, color=ForestGreen] (1.7,0)--(1.7,2);
\node [draw, minimum width=.75cm, ultra thick, fill = white] (node) at (0,1) {$n-1$};
\node [draw=ForestGreen, ellipse, minimum width=.75cm, ultra thick, fill=white] (node) at (1.4,1) {$k$};

\end{tikzpicture}=
\begin{tikzpicture}[baseline=6ex]
\draw [ultra thick] (-.25,0)--(-.25,2);
\draw [ultra thick, color=ForestGreen] (.05,0)--(.05,2);
\draw [ultra thick, color=ForestGreen] (.35,0)--(.35,2);
\draw [ultra thick, color=ForestGreen] (.65,0)--(.65,2);
\node [draw, minimum width=1.4cm, ultra thick, fill = white] (node) at (0,1) {$\widehat{k}$};
\node (node) at (-.25,2) [above] {$n-1$};
\node (node) at (-.25,0) [below] {$n-1$};
\end{tikzpicture}.
\end{equation*}
\end{theorem}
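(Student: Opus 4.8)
The plan is to characterize both sides as the Jones--Wenzl projector $JW_{\widehat k}$ and invoke its uniqueness. Recall that, when it exists, $JW_N$ is the unique element of the Temperley--Lieb algebra on $N$ strands whose coefficient on the identity diagram equals $1$ and which is annihilated by precomposition and postcomposition with every elementary cap $e_1,\dots,e_{N-1}$. Since $JW_{\widehat k}$ exists (it is one of the projectors $JW_{mn-1}$, with $m=k+1$), it suffices to verify these two properties for the left-hand side $p:=JW_{n-1}\otimes\widetilde{JW}_k$, read as an element on $\widehat k=n-1+kn$ strands by resolving each of the $k$ green strands of $\widetilde{JW}_k$ into a bundle of $n$ strands as in Section \ref{thick JW}. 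The normalization property is the easy one: the top (maximal through-strand) term of $JW_{n-1}$ is the identity, the top term of the resolution of a green strand is its $n$-fold cable because the Chebyshev polynomial $T_n$ is monic, and hence the top term of $\widetilde{JW}_k=Fr(JW_k)$ is the identity on $kn$ strands; tensoring gives top term the identity on $\widehat k$ strands with coefficient $1$.

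The content is therefore the annihilation property, and here I would organize the caps $e_i$, $1\le i\le\widehat k-1$, into three types. First, caps supported on the initial $n-1$ strands are killed immediately by the $JW_{n-1}$ factor. Second, the ``green turnbacks,'' namely the caps obtained by applying the Frobenius operation to the elementary caps of the Temperley--Lieb algebra on $k$ strands, are killed by $\widetilde{JW}_k$: this annihilation is the image under the multiplicative Frobenius operation of the relations $e_jJW_k=JW_ke_j=0$. Third, and this is the crux, there remain the ``stray'' elementary black caps sitting inside a single green bundle or between two consecutive green bundles. These are not green turnbacks, and a single green bundle in isolation is not annihilated by them; the mechanism that makes $p$ vanish against them is that the Steinberg factor $JW_{n-1}$, after being transported across the green region using the $k=1$ case of the present theorem, supplies exactly the extra slack needed to reabsorb such a cap into a green turnback.

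Concretely, the $k=1$ identity (which follows from Theorem \ref{Steinberg skein identities}, being the skein incarnation of $V_{n-1}\otimes Fr(V)\cong V_{2n-1}$) says that $JW_{n-1}$ placed next to one green strand behaves like the corresponding sub-bundle of $JW_{2n-1}$; in particular it kills the cap straddling the last black strand and the first strand of that green bundle, and, more generally, a local black cap landing on that green strand. One then propagates: having used $JW_{n-1}$ on the first green strand, the internal ($t$-parameter) Jones--Wenzl structure that $\widetilde{JW}_k=Fr(JW_k)$ imposes on its $k$ green strands lets one slide the resulting Steinberg-absorbing clasp onto the next green strand, and so on, reducing every stray cap to the two cases already treated at the cost of lower through-strand correction terms which are themselves killed by the same properties. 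An equivalent and perhaps more transparent organization of the same argument is an induction on $k$: write $\widehat k=\widehat{k-1}+n$, express $\widetilde{JW}_k$ through $\widetilde{JW}_{k-1}$ and one green strand by transporting Wenzl's recursion along the Frobenius operation, match this against the $n$-strands-at-a-time recursion for the special projectors $JW_{mn-1}$, and feed in the inductive hypothesis $JW_{n-1}\otimes\widetilde{JW}_{k-1}=JW_{\widehat{k-1}}$ together with the $k=1$ identity.

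I expect the main obstacle to be exactly this propagation: making rigorous the claim that a single copy of the Steinberg clasp $JW_{n-1}$, physically adjacent only to the leftmost green strand, annihilates black caps arbitrarily deep in the green region. The resolution must exploit that $\widetilde{JW}_k$ is genuinely $Fr(JW_k)$ and not a bare cabling, so that its green strands are glued by a Frobenius-twisted Jones--Wenzl projector through which clasps can be slid; keeping careful track of the resulting lower-order terms, of the fact that the Frobenius operation is well-defined and multiplicative on the local pieces involved (itself among the inputs established earlier in the paper), and of the sign and quantum-integer bookkeeping coming from $[mn+j]_q=\pm[j]_q$ at a root of unity together with $t=q^{n^2}\in\{-1,1\}$, is the technical heart of the proof.
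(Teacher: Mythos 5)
Your main argument has a genuine gap at exactly the point you flag as the crux. Casting the claim as ``both sides satisfy the defining axioms of $JW_{\widehat{k}}$'' reduces everything to showing that $JW_{n-1}\otimes\widetilde{JW_k}$ is uncappable, and the caps you call ``stray'' --- those landing inside a single green bundle or straddling two consecutive bundles (or the $JW_{n-1}$/green boundary) --- are precisely where the miraculous cancellations live. Your proposed mechanism, transporting the Steinberg clasp across the green region by ``sliding'' it along the Frobenius-twisted projector, is never carried out, and you yourself identify it as the technical heart; as written this is a description of what a proof would need to do, not a proof. In the paper the analogous uncappability check (which appears in the proof of the recursion (\ref{JWhat}) for $JW_{\widehat{k}}$, not of Theorem \ref{other Steinberg skein identities} itself) is done by a concrete and different mechanism: inserting the explicit formula (\ref{JW_2n-1}) for $JW_{2n-1}$ so that only the $A_{n-1}'$ term survives, applying the $\widetilde{JW}$ analogue of the recursion (\ref{JW recursion}), and cancelling two terms via the green-trace identity (\ref{trace green strand}). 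There is a second, structural issue: the theorem is a local identity in $\Sq(M)$ for an arbitrary $3$-manifold, in which the green strands of $\widetilde{JW_k}$ may close up into knots threaded by $T_n$ (possibly around non-contractible regions); a $T_n$-threading is not the $n$-cable, so these instances are not partial closures of the single Temperley--Lieb identity $JW_{n-1}\otimes Fr(JW_k)=JW_{\widehat{k}}$, and the uniqueness argument, which lives entirely inside $\mathrm{TL}_{\widehat{k},\widehat{k}}$, does not reach them. (For $k=1$ this is visible already in Theorem \ref{Steinberg skein identities}, where the $T_n$-loop identity required a separate telescoping computation.)

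Your closing one-sentence alternative --- induction on $k$, expressing $\widetilde{JW_k}$ through $\widetilde{JW_{k-1}}$, matching against an $n$-strands-at-a-time recursion for $JW_{mn-1}$, and feeding in the inductive hypothesis together with the $k=1$ case --- is essentially the paper's actual proof: one absorbs $\widetilde{JW_{k-1}}$ into $\widetilde{JW_k}$, applies the inductive hypothesis to produce $JW_{\widehat{k-1}}$, pulls a $JW_{n-1}$ out of its rightmost strands, applies the $k=1$ identity (\ref{(green strand Steinberg identity)}) to merge it with the last green strand into $JW_{2n-1}$, and recognizes the result as the formula (\ref{JWhat}) for $JW_{\widehat{k}}$. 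Because these are all local moves, this route automatically covers every termination of the green strands. But it leans on the recursion (\ref{JWhat}), which the paper must (and does) establish separately; in your write-up that ingredient is only alluded to, so the gap above reappears there.
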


We view these skein identities as incarnations of the Steinberg tensor product formulas

\begin{equation*}
V_{n-1} \otimes Fr(V_{k}) \cong V_{n-1+kn}.
\end{equation*}

The $k=1$ case of Theorem \ref{Intro other Steinberg skein identities} is the same as Theorem \ref{intro Steinberg skein identities}. For each $k$ the identity in Theorem \ref{Intro other Steinberg skein identities} actually represents many skein identities, since the element $\widetilde{JW_k}$ can take on many forms in the skein module depending on the the destinations of the strands emanating from it.

\subsection{Acknowledgments}
V.H. and I.T. were partially supported by National Science Foundation RTG grant DMS-2136090. V.H. would like to thank Elijah Bodish for teaching him about the Steinberg tensor product formula.

\section{Quantum integers and Chebyshev polynomials}

For $q \in \mathbb{C}\setminus\{0\}$ and $k \in \mathbb{Z}_{\geq 0},$ the quantum integer $[k] \in \mathbb{C}$ is given by the expression $[k]=q^{k-1}+q^{k-3}+\cdots +q^{3-k}+q^{1-k}.$ If $q \neq \pm 1,$ we may write $[k]=(q^{k}-q^{-k})/(q-q^{-1}).$ The quantum factorial $[k]!$ is given by $[k]!=[k][k-1]\cdots[1].$

The following Chebyshev polynomials $T_k \in \mathbb{Z}[x]$ of the first kind and $S_k\in \mathbb{Z}[x]$ of the second kind are ubiquitious in representation theory and in skein theory. They are defined by the following recursive definitions, which show that they have integer coefficients.

For $k \geq 2,$ the polynomial $T_k$ is defined by the recursive formula

\begin{equation*}
T_k=xT_{k-1}-T_{k-2},
\end{equation*}
with the initial values of $T_1=x$ and $T_0=2.$

For $k \geq 2,$ the polynomial $S_k$ is given by the same recursive formula

\begin{equation*}
S_k=xS_{k-1}-S_{k-2},
\end{equation*}
but with initial values of $S_1=x$ and $S_0=1.$

They satisfy the properties

\begin{equation}\label{Tk defining property}
T_k(q+q^{-1})=q^k+q^{-k} \text{ and } S_k(q+q^{-1})=[k+1].
\end{equation}

We will make use of the identity

\begin{equation}\label{Tk and Sk}
T_k=xS_{k-1}-2S_{k-2}.
\end{equation}

\section{Kauffman bracket skein module and polynomial threadings}\label{KBSM}

The Kauffman bracket skein module $\mathcal{S}_q(M)$ of an oriented $3\text{-manifold } M$ is the module spanned by formal $\mathbb{C}\text{-linear}$ combinations of isotopy classes of framed unoriented links in $M$ modulo the following Kauffman bracket skein relations (\ref{Kauffman bracket relations}). Although the module depends on the choice of $q^{1/2},$ we will suppress this choice in the notation.

\begin{align}\label{Kauffman bracket relations}
\begin{tikzpicture}[baseline=1.5ex]
\draw [ultra thick] (0,0)--(1/2,1/2);
\node (a) at (1/4,1/4) {};
\draw [ultra thick] (0,1/2)--(a);
\draw [ultra thick] (a)--(1/2,0);
\end{tikzpicture}&=q^{1/2}
\begin{tikzpicture}[baseline=1.5ex]
\draw [ultra thick] (0,0) to [out=45,in=-45] (0,1/2);
\draw [ultra thick] (1/2,0) to [out=135, in=-135] (1/2,1/2);
\end{tikzpicture}+q^{-1/2}
\begin{tikzpicture}[baseline=1.5ex]
\draw [ultra thick] (0,1/2) to [out=-45, in =-135] (1/2,1/2);
\draw [ultra thick] (0,0) to [out=45, in=135] (1/2,0);
\end{tikzpicture}&
\begin{tikzpicture}[baseline=-.5ex]
\draw [ultra thick] (0,0) circle [radius=.25];
\end{tikzpicture}=-(q+q^{-1}).
\end{align}

When $M=\Sigma \times (0,1)$ is the thickening of an oriented surface $\Sigma$, the skein module $\mathcal{S}_q(M)$ has a natural algebra structure denoted by $\mathcal{S}_q(\Sigma)$ which is induced by the following product of links. If $L_1, L_2$ are two links in $\Sigma \times (0,1),$ their product is given by isotoping $L_1$ to obtain $L_1'$ contained in $\Sigma \times (1/2,1),$ isotoping $L_2$ to obtain $L_2'$ in $\Sigma \times (0,1/2)$ and then taking the union of the links $[L_1][L_2]=[L_1' \cup L_2'] \in \Sq(\Sigma).$

Here, we consider surfaces $\Sigma$ which have been obtained from a compact oriented surface by removing some nonnegative number of points, which we will refer to as punctures. One may study $\Sq(\Sigma)$ by working with link diagrams on the surface $\Sigma.$ By using the Kauffman bracket relations to remove crossings and trivial loops, one may rewrite any link diagram in $\Sq(\Sigma)$ in the basis consisting of multicurves on $\Sigma$ without trivial loops \cite{Prz91,SW07}.

When $\Sigma=A$ is the annulus, we see that $\Sq(A)=\mathbb{C}[x]$, where $x$ is the diagram of the core loop of the annulus. Any framed knot $K$ in an oriented manifold $M$ is given by an embedding $k: A \hookrightarrow M$. By the functoriality of skein modules \cite{Prz91}, $k$ induces a homomorphism of skein modules $k_*: \Sq(A) \rightarrow \Sq(M)$ which sends the core loop of the annulus to the element $[K] \in \Sq(M).$ For any polynomial $P \in \mathbb{C}[x]$, the element denoted by $K^{[P]} \in \Sq(M)$ is called the \underline{threading of $P$ along $K$} and is given by $K^{[P]}=k_*(P).$ If a link $L=\cup K_i$ is a union of knots $K_i,$ then the threading of $P$ along $L$ is given by the union of threadings of $P$ along each component $K_i$ of $L.$

Although the following lemma is a consequence of a deep and general result that the skein algebra of any surface has no nontrivial zero divisors \cite{BW11,PS19}, we will present a short elementary proof for our specific case.

\begin{lemma}\label{Steinberg loop}
Suppose $\Sigma$ is a surface with at least one puncture. Suppose that $P=\displaystyle\sum_{i=0}^{\text{deg}(P)}a_ix^i \in \mathbb{C}[x]$ is a nonzero polynomial and $[K]\in \Sq(\Sigma)$ is represented by a knot $K$ with a diagram that is a simple loop around a puncture of $\Sigma.$ If $K$ does not bound a disk on $\Sigma$ then $K^{[P]}$ is not a zero divisor in $\Sq(\Sigma).$
\end{lemma}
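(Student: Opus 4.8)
The plan is to work with the basis of $\Sq(\Sigma)$ given by multicurves without trivial loops and to introduce a grading (or filtration) coming from the geometric intersection number with the loop $K$. Concretely, for a multicurve $\gamma$ on $\Sigma$ let $\iota(\gamma)$ denote the minimal geometric intersection number of $\gamma$ with $K$; since $K$ is a simple loop around a puncture that does not bound a disk, there exist multicurves realizing each value of $\iota$, and crucially $\iota$ is additive in the following sense: if $\gamma$ is reduced (no trivial loops, minimal intersection with $K$), then resolving the product $[K]\cdot[\gamma]$ using the Kauffman relations produces a leading term (in the $\iota$-filtration) equal to the multicurve obtained by smoothing all crossings so as to increase intersection with $K$, i.e. a reduced multicurve $\gamma'$ with $\iota(\gamma') = \iota(\gamma)+2$, and all other terms have strictly smaller $\iota$. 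This is the standard ``highest term'' argument used to prove that the core loop of an annular neighborhood is not a zero divisor.

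First I would make precise the filtration: let $F_{\le d}\Sq(\Sigma)$ be the span of multicurves $\gamma$ with $\iota(\gamma)\le d$. I would check that multiplication by $[K]$ maps $F_{\le d}$ into $F_{\le d+2}$, and that on the associated graded the operator ``multiply by $[K]$'' acts, up to an invertible scalar (a power of $q^{\pm 1/2}$ times $\pm 1$, never zero since $q\neq 0$), by the injective map $\gamma \mapsto \gamma'$ on reduced multicurves. The key geometric input is that a reduced multicurve stays reduced after adding one more parallel strand along $K$ near the puncture — this uses that $K$ is around a puncture (so a small annular neighborhood on one side is available) and that $K$ is essential, so no bigon can form between $K$ and $\gamma$ after the smoothing. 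Then for a nonzero $z = \sum c_\gamma \gamma \in \Sq(\Sigma)$, if $d$ is the top $\iota$-degree appearing in $z$, the element $[K]\cdot z$ has top $\iota$-degree exactly $d+2$ with nonzero leading part (the leading parts of distinct reduced multicurves are linearly independent basis elements), hence $[K]\cdot z \neq 0$; more generally $[K]^m\cdot z\neq 0$ for all $m$.

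Next, to handle $K^{[P]}$ rather than $[K]$ itself, I would recall from the construction in Section~\ref{KBSM} that $K^{[P]} = k_*(P(x)) = \sum_{i=0}^{\deg P} a_i [K]^i$ where $[K]^i$ is the threading of $x^i$, i.e.\ $i$ parallel copies of $K$, which by the above has top $\iota$-degree exactly $2i$. Therefore, writing $d_P = \deg P$ and using that $a_{d_P}\neq 0$, the element $K^{[P]}$ itself has a well-defined behavior on the associated graded: for a nonzero $z$ with top $\iota$-degree $d$, the product $K^{[P]}\cdot z$ has top $\iota$-degree exactly $d + 2d_P$, with leading part equal to $a_{d_P}$ times the leading part of $[K]^{d_P}\cdot z$, which is nonzero by the injectivity statement and linear independence of basis multicurves. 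Hence $K^{[P]}\cdot z \neq 0$, so $K^{[P]}$ is not a zero divisor.

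The main obstacle, and the step deserving the most care, is the geometric claim that adding a parallel copy of $K$ to a reduced multicurve produces a multicurve that is still reduced (no trivial loops, and minimal position with respect to $K$), so that the $\iota$-degree genuinely increases by exactly $2$ with a nonvanishing coefficient rather than dropping. I would prove this by passing to the cover or by a direct innermost-bigon argument: any bigon between $\gamma \cup K'$ and $K$ (where $K'$ is the new parallel copy) would have to involve $\gamma$, contradicting minimality of $\gamma$, or involve only $K$ and $K'$, contradicting that they are disjoint parallel essential curves around the puncture. Once this is nailed down, the filtration/leading-term bookkeeping is routine. A minor additional point to address is that $\Sigma$ may be noncompact (punctured): one should note the multicurve basis and the intersection number $\iota$ still behave well, and that finitely many basis curves appear in any given $z$, so the ``top degree'' is well-defined.
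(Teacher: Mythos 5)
There is a genuine gap, and it lies in the choice of filtration. You grade multicurves by the minimal geometric intersection number $\iota(\gamma)$ with $K$. But $K$ is a simple loop around a puncture: it bounds a once-punctured disk, so it can be isotoped into an arbitrarily small neighborhood of the puncture, disjoint from any compact simple closed curve. Hence $\iota(\gamma)=0$ for \emph{every} multicurve $\gamma$, the filtration $F_{\le d}$ is all of $\Sq(\Sigma)$ already at $d=0$, and the product $[K]\cdot\gamma$ involves no crossings and no Kauffman resolution at all --- it is simply the disjoint union of $\gamma$ with a small parallel copy of $K$ near the puncture, which again has $\iota=0$. Consequently the claims that $\iota(\gamma')=\iota(\gamma)+2$, that $[K]^i$ has top $\iota$-degree $2i$, and that $K^{[P]}\cdot z$ has top $\iota$-degree $d+2\deg P$ are all false, and the leading-term argument that is supposed to isolate the coefficient $a_{\deg P}$ collapses: all the summands $a_i[K]^i\cdot z$ sit in the same (zero-th) filtration level, so nothing prevents cancellation as far as your filtration can see.

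The correct "degree" here --- and the one the paper uses --- is not intersection number with $K$ but the \emph{multiplicity} $m(D)$ of components of a basis multicurve $D$ that are isotopic to $K$, so that $D=K^{m(D)}D'$ with $m(D')=0$. Because $K$ is peripheral, multiplying a basis diagram by $K$ just adds one more disjoint parallel copy near the puncture, producing a new basis diagram with multiplicity raised by exactly one and with coefficient exactly $1$; this map on basis diagrams is injective, so the term $a_{\deg P}K^{\deg P+m(y)}\bigl(\sum b_iD_i'\bigr)$ survives as the unique top-multiplicity part of $K^{[P]}y$. Your "key geometric input" (a reduced multicurve stays reduced after adding a parallel copy of $K$, and distinct multicurves stay distinct) is exactly the right observation and is what makes this work --- the hypothesis that $K$ does not bound a disk is needed so that the added copies are not trivial loops that would be evaluated to scalars --- but it must be packaged with the multiplicity grading rather than the intersection-number grading for the argument to close. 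A small further point: one should also note, as the paper does, that $K^{[P]}$ is central (again because $K$ is peripheral), so checking $K^{[P]}y\neq 0$ on one side suffices.
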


\begin{proof}
The proof strategy is to consider multiplication in the multicurve basis, keeping track of highest degree terms in the sense of the number of copies of $K$ in each multicurve. The details are as follows. For a basis multicurve diagram $D \in \Sq(\Sigma),$ let $m(D) \in \mathbb{Z}_{\geq 0}$ denote the multiplicity of $K$ in $D,$ which is the number of copies of the curves isotopic to $K$ appearing in $D.$ Then each basis diagram $D$ can be rewritten as $D=K^{m(D)}D' \in \Sq(\Sigma)$ where $D'$ is a basis diagram such that $m(D')=0.$ We have that two basis diagrams $D,E$ are isotopic if and only if $m(D)=m(E)$ and $E'$ is isotopic to $D'.$ Furthermore, we can extend the notion of the multiplicity of $K$ to consider arbitrary elements of the skein algebra. If $y=\sum b_iD_i$ is any element written as a linear combination of basis diagrams $D_i,$ then we define $m(y)=\displaystyle\max_{b_i \neq 0} \{m(D_i)\}.$ We define the top term of $y$ to be equal to $y_{\text{top}}=\displaystyle\sum_{m(D_i)=m(y)}b_iD_i.$

To show that $K^{[P]}$ is not a zero divisor, we must show that for any nonzero element $y \in \Sq(\Sigma)$ we have $K^{[P]}y \neq 0$ and $yK^{[P]} \neq 0.$ Since $K$ is a simple loop around a puncture on $\Sigma,$ $K^{[P]}$ is in the center of $\Sq(\Sigma)$. Thus, we only need to show the first non-equality. We will do this by showing that $(K^{[P]}y)_{\text{top}}\neq0.$

We suppose $y\neq 0$ and we write $y=\sum b_i D_i$ in the basis, where $b_i \in \mathbb{C}$ and $D_i$ are basis diagrams. We rewrite the sum by grouping the diagrams according to $m(D_i)$ in the following way

\begin{equation*}
y=\sum b_iD_i=\sum b_iK^{m(D_i)}D_i'=\sum_{l=0}^{m(y)}K^l\left(\sum_{m(D_i)=l} b_i D_i'\right).
\end{equation*}

We have that $y_{\text{top}}=K^{m(y)}\left(\displaystyle\sum_{m(D_i)=m(y)} b_i D_i'\right)\neq 0$ by assumption. We then compute that for $K^{[P]}=\displaystyle\sum_{j=0}^{\deg(P)} a_j K^j$ we have

\begin{equation*}
K^{[P]}y=\sum_{i,j} a_jb_iK^{m(D_i)+j}D_i'=\sum_{j=1}^{\deg(P)}\sum_{l=0}^{m(y)} a_jK^{l+j}\left(\sum_{m(D_i)=l} b_i D_i'\right).
\end{equation*}

We have that

\begin{equation}
(K^{[P]}y)_{\text{top}}=a_{\deg(P)}K^{\deg(P)+m(y)}\left(\sum_{m(D_i)=m(y)}b_iD_i'\right) \neq 0
\end{equation}
since it is a nontrivial linear combination of a nonempty set of distinct basis diagrams. Therefore, $K^{[P]}y \neq 0.$
\end{proof}

\section{Temperley-Lieb category and Jones-Wenzl projectors}

The Temperley-Lieb category $\text{TL}$ is built from certain tangle diagrams in a rectangle. $\mathrm{TL}$ has objects $n \in \mathbb{Z}_{\geq 0}.$ For objects $k,l,$ the vector space of morphisms $k \rightarrow l$ is given by $\mathrm{TL}_{k,l}$ and is spanned by diagrams of framed $(k,l)\text{-tangles}$ in a rectangle modulo the Kauffman bracket relations (\ref{Kauffman bracket relations}), with $k$ endpoints at the bottom of the rectangle and $l$ endpoints at the top. Composition in the category is given by concatenating diagrams. Although the tangle diagrams may contain crossings or trivial loops, the Kauffman bracket relations can be used to rewrite any diagram in $\mathrm{TL}_{k,l}$ as a linear combination of diagrams in the standard basis of planar matchings. For any object $k,$ the space $\mathrm{TL}_{k,k}$ is an endomorphism algebra where the identity element $\text{Id}_k \in \mathrm{TL}_{k,k}$ is given by the diagram of $k$ vertical strands.

Let $\text{Rep}_q(\mathrm{SL}_2)$ denote the category of finite dimensional representations of the quantum group $U_q(sl_2).$ The category $\text{TL}$ is isomorphic to the full subcategory of $\text{Rep}_q(\mathrm{SL}_2)$ whose objects are given by tensor powers $V^{\otimes k}$ of the standard 2-dimensional vector representation $V$; see \cite{RTW32,Kup96} for the classical and semisimple cases and \cite{Eli15} for the non-semisimple case. Consequently, the idempotent completion of $\mathrm{TL}$ yields the category of tilting modules of $\text{Rep}_q(\mathrm{SL}_2),$ consisting of objects which are direct summands of some $V^{\otimes k}.$ When $q=\pm 1$ or when $q$ is not a root of unity, this category is semisimple and every representation is a tilting module. However, when $q$ is a root of unity, there are interesting modules which are not tilting modules including the Frobenius module $Fr(V)$ which we will discuss in Section \ref{Quantum group root of unity}. 

When $q$ is generic, the correspondence between irreducible representations $V_k=\text{Sym}^k(V)$ and special elements of $\mathrm{TL}$ is given by the Jones-Wenzl projectors which we now recall. An element $JW_k$ of $\mathrm{TL}_{k,k}$ is called a \underline{Jones-Wenzl projector} if it satisfies the following two axiomatic properties:

\begin{enumerate}[i)]
    \item \label{axiom 1} The coefficient of $\text{Id}_k$ is $1$ when $JW_k$ is expressed in the standard basis of $\mathrm{TL}_{k,k},$
    \item \label{axiom 2} the element $JW_k$ is ``uncappable", meaning it satisfies (\ref{uncappable}):
\end{enumerate}

\begin{equation}\label{uncappable}
	\begin{tikzpicture}[baseline=6ex]
		\draw [ultra thick] (-0.5,.25)--(-0.5,1.75);
		\draw [ultra thick] (.5,.25)--(.5,1.75);
		\draw [dotted, ultra thick] (-.4,1.5)--(.4,1.5);
		\draw [ultra thick] (.25,1)--(.25,.6);
		\draw [ultra thick] (-.25,1)--(-.25,.6);
		\draw [ultra thick] (.25,.6) arc (0: -180: .25);
		\node [draw, minimum width=1.25cm, ultra thick, fill = white] (node) at (0,1) {$k$};
	\end{tikzpicture}=0=
	\begin{tikzpicture}[baseline=6ex]
		\draw [ultra thick] (-0.5,.25)--(-0.5,1.75);
		\draw [ultra thick] (.5,.25)--(.5,1.75);
		\draw [dotted, ultra thick] (-.4,.5)--(.4,.5);
		\draw [ultra thick] (.25,1)--(.25,1.4);
		\draw [ultra thick] (-.25,1)--(-.25,1.4);
		\draw [ultra thick] (.25,1.4) arc (0: 180: .25);
		\node [draw, minimum width=1.25cm, ultra thick, fill = white] (node) at (0,1) {$k$};
	\end{tikzpicture}
	.\end{equation}

Here, we draw the element $JW_k$ as a box labeled by $k.$ Each strand drawn is understood to depict a collection of a certain number of parallel strands, which may be indicated by an integer label next to the strand. By convention, a diagram is zero if any strand label is negative. We will often leave strands unlabeled whenever the label either is arbitrary or when it can be deduced from the diagram. The axioms guarantee that if $JW_k$ exists, it is unique. Further, they imply the following properties of absorption and both vertical and horizontal symmetry.

\begin{equation}\label{absorption}
\begin{tikzpicture}[baseline=6ex]
\draw [ultra thick] (0,0)--(0,1.75);
\draw [ultra thick] (-.5, .4)--(-.5,1.75);
\draw [ultra thick] (.5,.4)--(.5,1.75);
\node [draw, minimum width=1.25cm, ultra thick, fill = white] (node) at (0,.5) {$a$};
\node [draw, minimum width=.75cm, ultra thick, fill=white] (node) at (0,1.25) {$b$};
\end{tikzpicture}=
\begin{tikzpicture}[baseline=6ex]
\draw [ultra thick] (0,0)--(0,1.75);
\node [draw, minimum width=1.25cm, ultra thick, fill = white] (node) at (0,1) {$a$};
\end{tikzpicture} \text{ for } b \leq a, \text{ and }
\begin{tikzpicture}[baseline=6ex]
\draw [ultra thick] (0,0)--(0,2);
\node [draw, minimum width=1.25cm, ultra thick, fill = white] (node) at (0,1) {$\scalebox{-1}[1]{B}$};
\end{tikzpicture}=
\begin{tikzpicture}[baseline=6ex]
\draw [ultra thick] (0,0)--(0,2);
\node [draw, minimum width=1.25cm, ultra thick, fill = white] (node) at (0,1) {B};
\end{tikzpicture}=
\begin{tikzpicture}[baseline=6ex]
\draw [ultra thick] (0,0)--(0,2);
\node [draw, minimum width=1.25cm, ultra thick, fill = white] (node) at (0,1) {$\scalebox{1}[-1]{B}$};
\end{tikzpicture}.
\end{equation}

The following recursive definition of Jones-Wenzl projectors guarantees that each $JW_k$ exists if $[k]! \neq 0.$

\begin{equation}\label{JW recursion}
\begin{tikzpicture}[baseline=6ex]
\draw [ultra thick] (0,0)--(0,2);
\node [draw, minimum width=1.25cm, ultra thick, fill = white] (node) at (0,1) {$k$};
\end{tikzpicture}=
\begin{tikzpicture}[baseline=6ex]
\draw [ultra thick] (0,0)--(0,2);
\draw [ultra thick] (1,0)--(1,2);
\node [draw, minimum width=1.25cm, ultra thick, fill = white] (node) at (0,1) {$k-1$};
\end{tikzpicture}+\frac{[k-1]}{[k]}
\begin{tikzpicture}[baseline=6ex]
\draw [ultra thick] (-0.5,0.5)--(-0.5,1.5);
\draw [ultra thick] (0,0)--(0,.6);
\draw [ultra thick] (0,2)--(0,1.4);
\draw [ultra thick] (1,1.4) arc (0: -180: .25);
\draw [ultra thick] (1,.6) arc (0: 180: .25);
\draw [ultra thick] (1,0)--(1,.6);
\draw [ultra thick] (1,2)--(1,1.4);
\node [draw, minimum width=1.25cm, ultra thick, fill = white] (node) at (0,1.5) {$k-1$};
\node [draw, minimum width=1.25cm, ultra thick, fill = white] (node) at (0,.5) {$k-1$};
\end{tikzpicture}.
\end{equation}

We will make use of the following well-known identities.

\begin{lemma}
Assume $JW_k$ exists for $1 \leq k \leq m.$ Then we have 

\begin{equation}\label{JW trace}
\centerv {
\def\svgscale{0.6}
\begingroup%
  \makeatletter%
  \providecommand\color[2][]{%
    \errmessage{(Inkscape) Color is used for the text in Inkscape, but the package 'color.sty' is not loaded}%
    \renewcommand\color[2][]{}%
  }%
  \providecommand\transparent[1]{%
    \errmessage{(Inkscape) Transparency is used (non-zero) for the text in Inkscape, but the package 'transparent.sty' is not loaded}%
    \renewcommand\transparent[1]{}%
  }%
  \providecommand\rotatebox[2]{#2}%
  \newcommand*\fsize{\dimexpr\f@size pt\relax}%
  \newcommand*\lineheight[1]{\fontsize{\fsize}{#1\fsize}\selectfont}%
  \ifx\svgwidth\undefined%
    \setlength{\unitlength}{88.88556353bp}%
    \ifx\svgscale\undefined%
      \relax%
    \else%
      \setlength{\unitlength}{\unitlength * \real{\svgscale}}%
    \fi%
  \else%
    \setlength{\unitlength}{\svgwidth}%
  \fi%
  \global\let\svgwidth\undefined%
  \global\let\svgscale\undefined%
  \makeatother%
  \begin{picture}(1,0.95672883)%
    \lineheight{1}%
    \setlength\tabcolsep{0pt}%
    \put(0,0){\includegraphics[width=\unitlength,page=1]{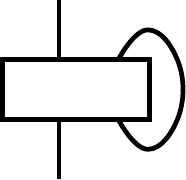}}%
    \put(0.20400246,0.43139691){\color[rgb]{0,0,0}\makebox(0,0)[lt]{\lineheight{1.25}\smash{\begin{tabular}[t]{l}$\scriptstyle m$\end{tabular}}}}%
    \put(0.79212653,0.85727666){\color[rgb]{0,0,0}\makebox(0,0)[lt]{\lineheight{1.25}\smash{\begin{tabular}[t]{l}$\scriptstyle 1$\end{tabular}}}}%
  \end{picture}%
\endgroup%

}
= (-1)\frac{[m+1]}{[m]} \times
\centerv {
\def\svgscale{0.6}
\begingroup%
  \makeatletter%
  \providecommand\color[2][]{%
    \errmessage{(Inkscape) Color is used for the text in Inkscape, but the package 'color.sty' is not loaded}%
    \renewcommand\color[2][]{}%
  }%
  \providecommand\transparent[1]{%
    \errmessage{(Inkscape) Transparency is used (non-zero) for the text in Inkscape, but the package 'transparent.sty' is not loaded}%
    \renewcommand\transparent[1]{}%
  }%
  \providecommand\rotatebox[2]{#2}%
  \newcommand*\fsize{\dimexpr\f@size pt\relax}%
  \newcommand*\lineheight[1]{\fontsize{\fsize}{#1\fsize}\selectfont}%
  \ifx\svgwidth\undefined%
    \setlength{\unitlength}{57.56680226bp}%
    \ifx\svgscale\undefined%
      \relax%
    \else%
      \setlength{\unitlength}{\unitlength * \real{\svgscale}}%
    \fi%
  \else%
    \setlength{\unitlength}{\svgwidth}%
  \fi%
  \global\let\svgwidth\undefined%
  \global\let\svgscale\undefined%
  \makeatother%
  \begin{picture}(1,1.47722935)%
    \lineheight{1}%
    \setlength\tabcolsep{0pt}%
    \put(0,0){\includegraphics[width=\unitlength,page=1]{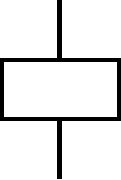}}%
    \put(0.13977526,0.68353869){\color[rgb]{0,0,0}\makebox(0,0)[lt]{\lineheight{1.25}\smash{\begin{tabular}[t]{l}$\scriptstyle m-1$\end{tabular}}}}%
  \end{picture}%
\endgroup%

}
.
\end{equation}

\begin{equation}\label{JW closed}
\centerv{\def\svgscale{0.5}
\begingroup%
  \makeatletter%
  \providecommand\color[2][]{%
    \errmessage{(Inkscape) Color is used for the text in Inkscape, but the package 'color.sty' is not loaded}%
    \renewcommand\color[2][]{}%
  }%
  \providecommand\transparent[1]{%
    \errmessage{(Inkscape) Transparency is used (non-zero) for the text in Inkscape, but the package 'transparent.sty' is not loaded}%
    \renewcommand\transparent[1]{}%
  }%
  \providecommand\rotatebox[2]{#2}%
  \newcommand*\fsize{\dimexpr\f@size pt\relax}%
  \newcommand*\lineheight[1]{\fontsize{\fsize}{#1\fsize}\selectfont}%
  \ifx\svgwidth\undefined%
    \setlength{\unitlength}{101.46328183bp}%
    \ifx\svgscale\undefined%
      \relax%
    \else%
      \setlength{\unitlength}{\unitlength * \real{\svgscale}}%
    \fi%
  \else%
    \setlength{\unitlength}{\svgwidth}%
  \fi%
  \global\let\svgwidth\undefined%
  \global\let\svgscale\undefined%
  \makeatother%
  \begin{picture}(1,0.7504815)%
    \lineheight{1}%
    \setlength\tabcolsep{0pt}%
    \put(0,0){\includegraphics[width=\unitlength,page=1]{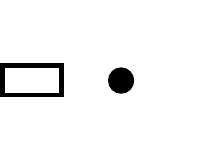}}%
    \put(0.07410974,0.34801418){\color[rgb]{0,0,0}\makebox(0,0)[lt]{\lineheight{1.25}\smash{\begin{tabular}[t]{l}$\scriptstyle m$\end{tabular}}}}%
    \put(0,0){\includegraphics[width=\unitlength,page=2]{2.0-JWcloseSm-start.pdf}}%
  \end{picture}%
\endgroup%

}
=
\centerv{\def\svgscale{0.5}
\begingroup%
  \makeatletter%
  \providecommand\color[2][]{%
    \errmessage{(Inkscape) Color is used for the text in Inkscape, but the package 'color.sty' is not loaded}%
    \renewcommand\color[2][]{}%
  }%
  \providecommand\transparent[1]{%
    \errmessage{(Inkscape) Transparency is used (non-zero) for the text in Inkscape, but the package 'transparent.sty' is not loaded}%
    \renewcommand\transparent[1]{}%
  }%
  \providecommand\rotatebox[2]{#2}%
  \newcommand*\fsize{\dimexpr\f@size pt\relax}%
  \newcommand*\lineheight[1]{\fontsize{\fsize}{#1\fsize}\selectfont}%
  \ifx\svgwidth\undefined%
    \setlength{\unitlength}{103.67101474bp}%
    \ifx\svgscale\undefined%
      \relax%
    \else%
      \setlength{\unitlength}{\unitlength * \real{\svgscale}}%
    \fi%
  \else%
    \setlength{\unitlength}{\svgwidth}%
  \fi%
  \global\let\svgwidth\undefined%
  \global\let\svgscale\undefined%
  \makeatother%
  \begin{picture}(1,0.77930529)%
    \lineheight{1}%
    \setlength\tabcolsep{0pt}%
    \put(0,0){\includegraphics[width=\unitlength,page=1]{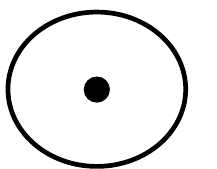}}%
    \put(0.76167114,0.66850081){\color[rgb]{0,0,0}\makebox(0,0)[lt]{\lineheight{1.25}\smash{\begin{tabular}[t]{l}$\scriptstyle S_m$\end{tabular}}}}%
  \end{picture}%
\endgroup%

}.
\end{equation}

\end{lemma}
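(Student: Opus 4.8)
The plan is to prove the partial‑trace identity (\ref{JW trace}) first, by a direct computation from the recursion (\ref{JW recursion}), and then to prove the closure identity (\ref{JW closed}) by induction on $m$, using (\ref{JW trace}) in the inductive step; this dependence is why I would bundle the two statements together.

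For (\ref{JW trace}), I would expand $JW_m$ by the recursion (\ref{JW recursion}) and then close off the rightmost strand. Closing the rightmost strand of the first term, $JW_{m-1}$ next to a plain vertical strand, yields $JW_{m-1}$ disjoint from a single circle, which by the second relation in (\ref{Kauffman bracket relations}) contributes $-(q+q^{-1})\,JW_{m-1}=-[2]\,JW_{m-1}$. In the second term --- $\tfrac{[m-1]}{[m]}$ times two copies of $JW_{m-1}$ joined by a turnback (a cup and a cap) on the last two strands --- closing the rightmost strand fuses the cup, the cap, and the closing arc into one through‑strand, so the turnback becomes the identity on $m-1$ strands and the term contributes $\tfrac{[m-1]}{[m]}\,JW_{m-1}JW_{m-1}=\tfrac{[m-1]}{[m]}\,JW_{m-1}$. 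Adding the two contributions and using the quantum‑integer identity $[2][m]=[m+1]+[m-1]$ gives $\bigl(-[2]+\tfrac{[m-1]}{[m]}\bigr)JW_{m-1}=\tfrac{-[2][m]+[m-1]}{[m]}JW_{m-1}=-\tfrac{[m+1]}{[m]}JW_{m-1}$, which is (\ref{JW trace}). The division is legitimate since $[m]\neq 0$ (as $JW_m$ exists).

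For (\ref{JW closed}), by functoriality of $\Sq$ over the ring $\Sq(A)=\mathbb{C}[x]$ it suffices to prove the identity in the solid torus, namely that the closure of $JW_m$ in $\Sq(A)$ equals $S_m(x)$, where $x$ is the core curve; the version threaded along an arbitrary knot then follows by push‑forward. I would induct on $m$; the base case $m=1$ is the tautology that the closure of $JW_1=\mathrm{Id}_1$ is $x=S_1(x)$. For the step ($m\ge 2$), apply (\ref{JW recursion}) and close up: the closure of $JW_m$ equals the closure of ($JW_{m-1}$ next to a plain vertical strand), plus $\tfrac{[m-1]}{[m]}$ times the closure of the turnback term. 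The first summand is the closure of $JW_{m-1}$ disjoint from one parallel core curve, which equals $x\,S_{m-1}(x)$ by the inductive hypothesis (adjoining a disjoint parallel core curve is multiplication by $x$ in $\mathbb{C}[x]$). For the second summand, I would factor the turnback term $(JW_{m-1}\otimes\mathrm{Id}_1)\,e\,(JW_{m-1}\otimes\mathrm{Id}_1)$ (with $e$ the turnback on the last two strands) as $uv$, where $u\colon m-2\to m$ adjoins a cup creating two new strands and then applies $JW_{m-1}$, and $v\colon m\to m-2$ applies $JW_{m-1}$ and then caps off the last two strands. Then $vu$ is precisely the partial trace of $JW_{m-1}$ over its last strand, which by (\ref{JW trace}) with $m$ replaced by $m-1$ equals $-\tfrac{[m]}{[m-1]}JW_{m-2}$. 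Since the annular closure is a trace, the closure of $uv$ equals the closure of $vu$, so the second summand equals $\tfrac{[m-1]}{[m]}\cdot\bigl(-\tfrac{[m]}{[m-1]}\bigr)$ times the closure of $JW_{m-2}$, i.e.\ $-S_{m-2}(x)$ by the inductive hypothesis. Adding, the closure of $JW_m$ is $x\,S_{m-1}(x)-S_{m-2}(x)=S_m(x)$ by the defining recursion of $S_m$.

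The one substantive point is the second step of the inductive argument: recognizing that the turnback term of Wenzl's recursion, once closed, is a rescaled partial trace of $JW_{m-1}$, so that (\ref{JW trace}) governs it. The other ingredients are bookkeeping --- the circle relation, the identity $[2][m]=[m+1]+[m-1]$, the fact that adjoining a disjoint parallel core curve is multiplication by $x$ in $\Sq(A)=\mathbb{C}[x]$ (equivalently, multiplicativity of threading), and the trace property of the annular closure --- and one need only observe that $JW_{m-1},JW_{m-2}$ exist and $[m-1]\neq 0$ whenever $JW_m$ does.
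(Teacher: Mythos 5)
Your proof is correct and follows the same route the paper intends: the paper's own proof consists of the remark that both identities follow from the recursion (\ref{JW recursion}) together with a citation to \cite{Lic97}, and your argument --- resolving the recursion term by term for the partial trace, then inducting on $m$ for the closure using the cyclicity of the annular closure to reduce the turnback term to a partial trace --- is precisely the standard derivation found there. The only cosmetic omissions are the trivial base case $m=0$ (the empty closure equals $1=S_0$, needed when the inductive step is first invoked at $m=2$) and the observation that the hypothesis should be read as $[k]\neq 0$ for $1\le k\le m$ so that the recursion applies; neither affects correctness.
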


\begin{proof}
These identities are consequences of the recursion (\ref{JW recursion}). Full proofs can be found in \cite{Lic97}.
\end{proof}

\begin{lemma}\label{crossing absorption}
Suppose $JW_M$ exists. Then for any $k,l \geq 0$ we have
\begin{equation}
\centerv{ \def\svgscale{0.5}
\begingroup%
  \makeatletter%
  \providecommand\color[2][]{%
    \errmessage{(Inkscape) Color is used for the text in Inkscape, but the package 'color.sty' is not loaded}%
    \renewcommand\color[2][]{}%
  }%
  \providecommand\transparent[1]{%
    \errmessage{(Inkscape) Transparency is used (non-zero) for the text in Inkscape, but the package 'transparent.sty' is not loaded}%
    \renewcommand\transparent[1]{}%
  }%
  \providecommand\rotatebox[2]{#2}%
  \newcommand*\fsize{\dimexpr\f@size pt\relax}%
  \newcommand*\lineheight[1]{\fontsize{\fsize}{#1\fsize}\selectfont}%
  \ifx\svgwidth\undefined%
    \setlength{\unitlength}{85.03936467bp}%
    \ifx\svgscale\undefined%
      \relax%
    \else%
      \setlength{\unitlength}{\unitlength * \real{\svgscale}}%
    \fi%
  \else%
    \setlength{\unitlength}{\svgwidth}%
  \fi%
  \global\let\svgwidth\undefined%
  \global\let\svgscale\undefined%
  \makeatother%
  \begin{picture}(1,1.19313337)%
    \lineheight{1}%
    \setlength\tabcolsep{0pt}%
    \put(0,0){\includegraphics[width=\unitlength,page=1]{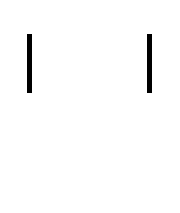}}%
    \put(0.3070526,1.05704965){\color[rgb]{0,0,0}\makebox(0,0)[lt]{\lineheight{1.25}\smash{\begin{tabular}[t]{l}$\scriptstyle k$\end{tabular}}}}%
    \put(0.60028331,1.06471872){\color[rgb]{0,0,0}\makebox(0,0)[lt]{\lineheight{1.25}\smash{\begin{tabular}[t]{l}$\scriptstyle \ell$\end{tabular}}}}%
    \put(0.4025477,0.54919185){\color[rgb]{0,0,0}\makebox(0,0)[lt]{\lineheight{1.25}\smash{\begin{tabular}[t]{l}$\scriptscriptstyle M$\end{tabular}}}}%
    \put(0,0){\includegraphics[width=\unitlength,page=2]{2.0-JWcross-startflip-opp.pdf}}%
  \end{picture}%
\endgroup%

}=q^{kl/2}\centerv{ \def\svgscale{0.6}
\begingroup%
  \makeatletter%
  \providecommand\color[2][]{%
    \errmessage{(Inkscape) Color is used for the text in Inkscape, but the package 'color.sty' is not loaded}%
    \renewcommand\color[2][]{}%
  }%
  \providecommand\transparent[1]{%
    \errmessage{(Inkscape) Transparency is used (non-zero) for the text in Inkscape, but the package 'transparent.sty' is not loaded}%
    \renewcommand\transparent[1]{}%
  }%
  \providecommand\rotatebox[2]{#2}%
  \newcommand*\fsize{\dimexpr\f@size pt\relax}%
  \newcommand*\lineheight[1]{\fontsize{\fsize}{#1\fsize}\selectfont}%
  \ifx\svgwidth\undefined%
    \setlength{\unitlength}{30.59716172bp}%
    \ifx\svgscale\undefined%
      \relax%
    \else%
      \setlength{\unitlength}{\unitlength * \real{\svgscale}}%
    \fi%
  \else%
    \setlength{\unitlength}{\svgwidth}%
  \fi%
  \global\let\svgwidth\undefined%
  \global\let\svgscale\undefined%
  \makeatother%
  \begin{picture}(1,2.31610146)%
    \lineheight{1}%
    \setlength\tabcolsep{0pt}%
    \put(0.22403492,1.06283569){\color[rgb]{0,0,0}\makebox(0,0)[lt]{\lineheight{1.25}\smash{\begin{tabular}[t]{l}$\scriptstyle M$\end{tabular}}}}%
    \put(0,0){\includegraphics[width=\unitlength,page=1]{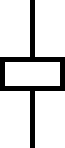}}%
  \end{picture}%
\endgroup%

}.
\end{equation}
\end{lemma}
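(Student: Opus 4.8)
The plan is to resolve the cable crossing using the Kauffman bracket relations (\ref{Kauffman bracket relations}) and then invoke the uncappability axiom (\ref{uncappable}) of $JW_M$ to kill every resulting term but one.

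First I would regard the left-hand side as $JW_M$ with a braid $\beta$ attached, where $\beta$ is the positive braid in which a cable of $k$ strands crosses over a cable of $\ell$ strands; note that $\beta$ has exactly $k\ell$ crossings. Applying the first relation of (\ref{Kauffman bracket relations}) at each crossing expresses $\beta$, as a morphism in the Temperley--Lieb category, as a $\mathbb{C}$-linear combination of planar matchings indexed by the $2^{k\ell}$ choices of smoothing, closed loops being evaluated by the second relation of (\ref{Kauffman bracket relations}). The unique smoothing that is turnback-free at every crossing yields the trivial braid, with coefficient $(q^{1/2})^{k\ell}=q^{k\ell/2}$. Any other smoothing uses a cap--cup turnback somewhere among the $k+\ell$ strands of $\beta$, and since the number of through-strands is non-increasing under composition of planar matchings, the resulting matching has strictly fewer than $k+\ell$ through-strands. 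Hence $JW_M\circ\beta$ equals $q^{k\ell/2}$ times $JW_M$ (with no crossing) plus a linear combination of diagrams in each of which at least two of the $k+\ell$ strands abutting $JW_M$ are joined to one another on the side facing $JW_M$.

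Next I would show that each of those leftover diagrams is zero. In a planar matching with fewer than $k+\ell$ through-strands, at least two of the endpoints meeting $JW_M$ are joined by an arc on the $JW_M$ side; among all such arcs choose one joining strand positions $a<b$ with $b-a$ minimal. If $b-a>1$, then by planarity position $a+1$ cannot run through to the far side, so it is joined to another such position strictly between $a$ and $b$, contradicting minimality; therefore $b=a+1$, and we have a cup on two adjacent strands immediately below $JW_M$. Composing with $JW_M$ turns this into a cap on two adjacent strands of $JW_M$, which vanishes by (\ref{uncappable}) (after moving the cap to the position pictured there via the horizontal symmetry in (\ref{absorption}), if necessary). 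Combining the two paragraphs gives
\[
JW_M\circ\beta=q^{k\ell/2}\,JW_M,
\]
which is the asserted identity; the variants in which the crossing lies on the other side of $JW_M$, or with the two cables interchanged, follow by the same argument after applying the symmetries of (\ref{absorption}).

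The only step that is more than bookkeeping is the combinatorial observation that a through-strand-deficient planar matching necessarily contains an \emph{adjacent} cup, which is exactly what makes the local relation (\ref{uncappable}) applicable; the innermost-arc argument above supplies this, and I expect it to be the only point requiring genuine care, the remainder being a routine tally of powers of $q^{1/2}$.
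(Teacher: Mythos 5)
Your argument is correct, and it rests on the same two ingredients as the paper's proof: the Kauffman bracket relation (\ref{Kauffman bracket relations}) to resolve crossings and the uncappable property (\ref{uncappable}) to kill the turnback terms. The difference is organizational. The paper resolves the $kl$ crossings one at a time, always taking the crossing \emph{closest} to $JW_M$; with that ordering, the $q^{-1/2}$ term of each resolution produces a cap sitting directly on two adjacent strands of the projector, so it dies on the spot by (\ref{uncappable}), and no global analysis of smoothings is needed --- the factor $q^{kl/2}$ accumulates one crossing at a time. You instead expand all $2^{kl}$ smoothings at once, which forces you to prove the auxiliary combinatorial fact that any through-strand-deficient planar matching contains an arc joining two \emph{adjacent} endpoints on the projector side; your innermost-arc argument supplies this correctly, and it is indeed the one point requiring care. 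So your route is a valid, slightly more global version of the same proof: it buys a clean statement (identity smoothing plus manifestly vanishing junk) at the cost of an extra planarity lemma that the paper's nearest-crossing-first ordering renders unnecessary. One cosmetic remark: the adjacent cap produced by your argument can sit at an arbitrary position along the top of $JW_M$, and (\ref{uncappable}) already covers all adjacent positions (that is what the dotted strands indicate), so the appeal to the reflection symmetry in (\ref{absorption}) is not needed there.
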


\begin{proof}
Resolve the crossing closest to the $JW$ projector using (\ref{Kauffman bracket relations}) and then use the uncappable property (\ref{uncappable}) of the $JW$ projector. Repeat this step until all crossings are gone. In this way, each crossing accounts for a factor of $q^{1/2}$ for an overall factor of $(q^{1/2})^{kl}.$
\end{proof}

In the following lemma and later, a blank box represents a $JW$ projector on an unspecified number of strands at least as large as the number of strands drawn. Although they may exist, we do not draw strands leaving the blank box that are not involved in the identity.

\begin{lemma}\label{JW crossing}
\begin{equation}
\centerv { \def\svgscale{0.5}
\begingroup%
  \makeatletter%
  \providecommand\color[2][]{%
    \errmessage{(Inkscape) Color is used for the text in Inkscape, but the package 'color.sty' is not loaded}%
    \renewcommand\color[2][]{}%
  }%
  \providecommand\transparent[1]{%
    \errmessage{(Inkscape) Transparency is used (non-zero) for the text in Inkscape, but the package 'transparent.sty' is not loaded}%
    \renewcommand\transparent[1]{}%
  }%
  \providecommand\rotatebox[2]{#2}%
  \newcommand*\fsize{\dimexpr\f@size pt\relax}%
  \newcommand*\lineheight[1]{\fontsize{\fsize}{#1\fsize}\selectfont}%
  \ifx\svgwidth\undefined%
    \setlength{\unitlength}{157.23641463bp}%
    \ifx\svgscale\undefined%
      \relax%
    \else%
      \setlength{\unitlength}{\unitlength * \real{\svgscale}}%
    \fi%
  \else%
    \setlength{\unitlength}{\svgwidth}%
  \fi%
  \global\let\svgwidth\undefined%
  \global\let\svgscale\undefined%
  \makeatother%
  \begin{picture}(1,0.90232879)%
    \lineheight{1}%
    \setlength\tabcolsep{0pt}%
    \put(0,0){\includegraphics[width=\unitlength,page=1]{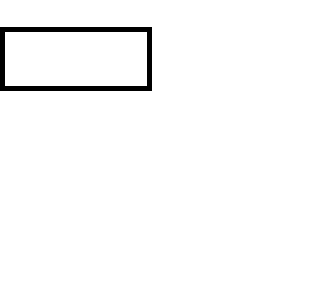}}%
    \put(0.65032765,0.37691636){\color[rgb]{0,0,0}\makebox(0,0)[lt]{\lineheight{1.25}\smash{\begin{tabular}[t]{l}$\scriptstyle m$\end{tabular}}}}%
    \put(0,0){\includegraphics[width=\unitlength,page=2]{2.4-ncross-prelim1-start.pdf}}%
    \put(0.10948995,0.10706463){\color[rgb]{0,0,0}\makebox(0,0)[lt]{\lineheight{1.25}\smash{\begin{tabular}[t]{l}$\scriptstyle 1$\end{tabular}}}}%
  \end{picture}%
\endgroup%

}
=
q^{m/2}
\centerv { \def\svgscale{0.5}
\begingroup%
  \makeatletter%
  \providecommand\color[2][]{%
    \errmessage{(Inkscape) Color is used for the text in Inkscape, but the package 'color.sty' is not loaded}%
    \renewcommand\color[2][]{}%
  }%
  \providecommand\transparent[1]{%
    \errmessage{(Inkscape) Transparency is used (non-zero) for the text in Inkscape, but the package 'transparent.sty' is not loaded}%
    \renewcommand\transparent[1]{}%
  }%
  \providecommand\rotatebox[2]{#2}%
  \newcommand*\fsize{\dimexpr\f@size pt\relax}%
  \newcommand*\lineheight[1]{\fontsize{\fsize}{#1\fsize}\selectfont}%
  \ifx\svgwidth\undefined%
    \setlength{\unitlength}{157.23642544bp}%
    \ifx\svgscale\undefined%
      \relax%
    \else%
      \setlength{\unitlength}{\unitlength * \real{\svgscale}}%
    \fi%
  \else%
    \setlength{\unitlength}{\svgwidth}%
  \fi%
  \global\let\svgwidth\undefined%
  \global\let\svgscale\undefined%
  \makeatother%
  \begin{picture}(1,0.90139592)%
    \lineheight{1}%
    \setlength\tabcolsep{0pt}%
    \put(0,0){\includegraphics[width=\unitlength,page=1]{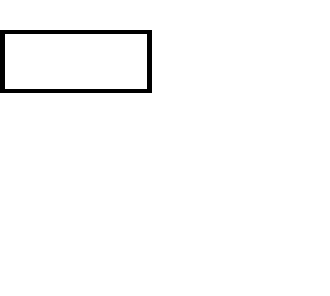}}%
    \put(0.41464221,0.5408375){\color[rgb]{0,0,0}\rotatebox{-34.879051}{\makebox(0,0)[lt]{\lineheight{1.25}\smash{\begin{tabular}[t]{l}$\scriptstyle m-1$\end{tabular}}}}}%
    \put(0,0){\includegraphics[width=\unitlength,page=2]{2.4-ncross-prelim1-end-term1.pdf}}%
    \put(0.01511818,0.28631031){\color[rgb]{0,0,0}\makebox(0,0)[lt]{\lineheight{1.25}\smash{\begin{tabular}[t]{l}$\scriptstyle 1$\end{tabular}}}}%
    \put(0.82928431,0.55639983){\color[rgb]{0,0,0}\makebox(0,0)[lt]{\lineheight{1.25}\smash{\begin{tabular}[t]{l}$\scriptstyle 1$\end{tabular}}}}%
    \put(0,0){\includegraphics[width=\unitlength,page=3]{2.4-ncross-prelim1-end-term1.pdf}}%
  \end{picture}%
\endgroup%

}
+
q^{-m/2}
\centerv { \def\svgscale{0.5}
\begingroup%
  \makeatletter%
  \providecommand\color[2][]{%
    \errmessage{(Inkscape) Color is used for the text in Inkscape, but the package 'color.sty' is not loaded}%
    \renewcommand\color[2][]{}%
  }%
  \providecommand\transparent[1]{%
    \errmessage{(Inkscape) Transparency is used (non-zero) for the text in Inkscape, but the package 'transparent.sty' is not loaded}%
    \renewcommand\transparent[1]{}%
  }%
  \providecommand\rotatebox[2]{#2}%
  \newcommand*\fsize{\dimexpr\f@size pt\relax}%
  \newcommand*\lineheight[1]{\fontsize{\fsize}{#1\fsize}\selectfont}%
  \ifx\svgwidth\undefined%
    \setlength{\unitlength}{157.23640382bp}%
    \ifx\svgscale\undefined%
      \relax%
    \else%
      \setlength{\unitlength}{\unitlength * \real{\svgscale}}%
    \fi%
  \else%
    \setlength{\unitlength}{\svgwidth}%
  \fi%
  \global\let\svgwidth\undefined%
  \global\let\svgscale\undefined%
  \makeatother%
  \begin{picture}(1,0.90635719)%
    \lineheight{1}%
    \setlength\tabcolsep{0pt}%
    \put(0,0){\includegraphics[width=\unitlength,page=1]{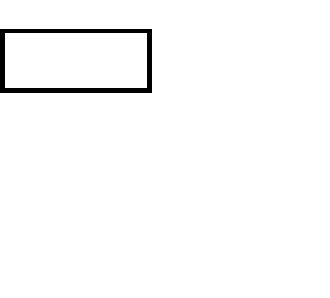}}%
    \put(0.12619551,0.54455652){\color[rgb]{0,0,0}\rotatebox{-31.232846}{\makebox(0,0)[lt]{\lineheight{1.25}\smash{\begin{tabular}[t]{l}$\scriptstyle m-1$\end{tabular}}}}}%
    \put(0,0){\includegraphics[width=\unitlength,page=2]{2.4-ncross-prelim1-step3-term2.pdf}}%
    \put(0.12619542,0.10901161){\color[rgb]{0,0,0}\makebox(0,0)[lt]{\lineheight{1.25}\smash{\begin{tabular}[t]{l}$\scriptstyle 1$\end{tabular}}}}%
    \put(0.64609546,0.63255968){\color[rgb]{0,0,0}\makebox(0,0)[lt]{\lineheight{1.25}\smash{\begin{tabular}[t]{l}$\scriptstyle 1$\end{tabular}}}}%
    \put(0,0){\includegraphics[width=\unitlength,page=3]{2.4-ncross-prelim1-step3-term2.pdf}}%
  \end{picture}%
\endgroup%

}.
\end{equation}    
\end{lemma}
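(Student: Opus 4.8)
The plan is to prove this by applying the Kauffman bracket relation (\ref{Kauffman bracket relations}) to one well-chosen crossing and then using Lemma~\ref{crossing absorption} to clear all of the remaining crossings at once. Conceptually, the two terms on the right are the skein-theoretic shadows of the decomposition of the braiding on $V\otimes V_m$ according to its action on the two summands $V_{m+1}$ and $V_{m-1}$, which is why they come with scalar coefficients $q^{m/2}$ and $q^{-m/2}$; but the proof itself needs only elementary diagrammatics.

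Concretely, I would resolve the crossing between the single strand and the strand of the blank $JW$ box that lies closest to the box. By (\ref{Kauffman bracket relations}) this produces two terms. In the first, with coefficient $q^{1/2}$, the single strand slips past that box-strand and runs into the box alongside the other $m-1$ strands, which it still crosses over; here the blank box plays the role of $JW_M$ in Lemma~\ref{crossing absorption}, so that lemma (applied with $k=1$ and $\ell=m-1$) removes the $m-1$ remaining crossings at the cost of $q^{(m-1)/2}$, and the total coefficient is $q^{1/2}\cdot q^{(m-1)/2}=q^{m/2}$. What is left is exactly the first diagram on the right: the $m-1$ now-uncrossed strands form the diagonal bundle labeled $m-1$, and the single strand together with the displaced box-strand account for the two strands labeled $1$. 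In the second term, with coefficient $q^{-1/2}$, the skein relation inserts a cup--cap pair that attaches the single strand to the box; after an isotopy one is again in position to run the argument of Lemma~\ref{crossing absorption} on the $m-1$ surviving crossings, but now the cup has reversed their handedness, so they are absorbed at the cost of $q^{-(m-1)/2}$, the total coefficient is $q^{-1/2}\cdot q^{-(m-1)/2}=q^{-m/2}$, and the remaining picture is the second diagram on the right. The case $m=1$ is nothing more than (\ref{Kauffman bracket relations}) with no crossings left to absorb, and it fixes the precise shapes of the two right-hand diagrams.

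The step I expect to be the main obstacle is this second (turnback) term. After the cup--cap pair is introduced one must isotope the picture carefully into the shape to which the crossing-absorption argument applies, and then verify that the handedness of every one of the $m-1$ remaining crossings has genuinely been reversed, so that the exponents of $q^{1/2}$ sum to $-m/2$ rather than to $(m-2)/2$. Drawing out the case $m=2$ in full is the most economical way to confirm that the signs and the absorbed cups behave as claimed, and the general statement then follows from the identical picture with the box emitting $m$ strands in place of $2$.
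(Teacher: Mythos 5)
Your proposal is correct and takes essentially the same approach as the paper, whose entire proof reads: resolve the left-most crossing and then apply Lemma \ref{crossing absorption}. The only additional content in your write-up is the (correct) observation that in the turnback term the remaining $m-1$ crossings are absorbed via the mirror image of Lemma \ref{crossing absorption} and so contribute $q^{-(m-1)/2}$ rather than $q^{(m-1)/2}$ --- a genuine subtlety that the paper's one-line proof leaves implicit, and which you rightly flag as the step to check carefully.
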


\begin{proof}
Resolve the left-most crossing and then apply Lemma \ref{crossing absorption}.
\end{proof}

\begin{lemma}\label{triangle}
Suppose that $JW_k$ exists for $1 \leq k \leq n-1.$ Then the following identity holds for any $m \leq n-1.$
\begin{equation}
\centerv {
\def\svgscale{0.6}
\begingroup%
  \makeatletter%
  \providecommand\color[2][]{%
    \errmessage{(Inkscape) Color is used for the text in Inkscape, but the package 'color.sty' is not loaded}%
    \renewcommand\color[2][]{}%
  }%
  \providecommand\transparent[1]{%
    \errmessage{(Inkscape) Transparency is used (non-zero) for the text in Inkscape, but the package 'transparent.sty' is not loaded}%
    \renewcommand\transparent[1]{}%
  }%
  \providecommand\rotatebox[2]{#2}%
  \newcommand*\fsize{\dimexpr\f@size pt\relax}%
  \newcommand*\lineheight[1]{\fontsize{\fsize}{#1\fsize}\selectfont}%
  \ifx\svgwidth\undefined%
    \setlength{\unitlength}{81.6200755bp}%
    \ifx\svgscale\undefined%
      \relax%
    \else%
      \setlength{\unitlength}{\unitlength * \real{\svgscale}}%
    \fi%
  \else%
    \setlength{\unitlength}{\svgwidth}%
  \fi%
  \global\let\svgwidth\undefined%
  \global\let\svgscale\undefined%
  \makeatother%
  \begin{picture}(1,1.21554148)%
    \lineheight{1}%
    \setlength\tabcolsep{0pt}%
    \put(0,0){\includegraphics[width=\unitlength,page=1]{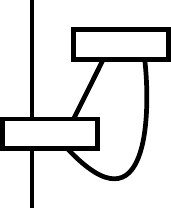}}%
    \put(0.53537723,0.62994808){\color[rgb]{0,0,0}\makebox(0,0)[lt]{\lineheight{1.25}\smash{\begin{tabular}[t]{l}$\scriptscriptstyle m$\end{tabular}}}}%
    \put(0.65298179,0.22950604){\color[rgb]{0,0,0}\makebox(0,0)[lt]{\lineheight{1.25}\smash{\begin{tabular}[t]{l}$\scriptscriptstyle 1$\end{tabular}}}}%
    \put(0.12340541,0.39440059){\color[rgb]{0,0,0}\makebox(0,0)[lt]{\lineheight{1.25}\smash{\begin{tabular}[t]{l}$\scriptscriptstyle n-1$\end{tabular}}}}%
  \end{picture}%
\endgroup%

}
=
\frac{[n-m-1]}{[n-1]} 
\centerv  {
\def\svgscale{0.6}
\begingroup%
  \makeatletter%
  \providecommand\color[2][]{%
    \errmessage{(Inkscape) Color is used for the text in Inkscape, but the package 'color.sty' is not loaded}%
    \renewcommand\color[2][]{}%
  }%
  \providecommand\transparent[1]{%
    \errmessage{(Inkscape) Transparency is used (non-zero) for the text in Inkscape, but the package 'transparent.sty' is not loaded}%
    \renewcommand\transparent[1]{}%
  }%
  \providecommand\rotatebox[2]{#2}%
  \newcommand*\fsize{\dimexpr\f@size pt\relax}%
  \newcommand*\lineheight[1]{\fontsize{\fsize}{#1\fsize}\selectfont}%
  \ifx\svgwidth\undefined%
    \setlength{\unitlength}{87.28937224bp}%
    \ifx\svgscale\undefined%
      \relax%
    \else%
      \setlength{\unitlength}{\unitlength * \real{\svgscale}}%
    \fi%
  \else%
    \setlength{\unitlength}{\svgwidth}%
  \fi%
  \global\let\svgwidth\undefined%
  \global\let\svgscale\undefined%
  \makeatother%
  \begin{picture}(1,1.29896498)%
    \lineheight{1}%
    \setlength\tabcolsep{0pt}%
    \put(0,0){\includegraphics[width=\unitlength,page=1]{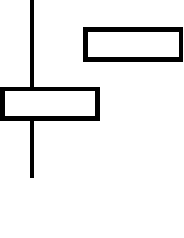}}%
    \put(0.84796925,0.5864876){\color[rgb]{0,0,0}\makebox(0,0)[lt]{\lineheight{1.25}\smash{\begin{tabular}[t]{l}$\scriptscriptstyle m$\end{tabular}}}}%
    \put(0.40405723,0.47309129){\color[rgb]{0,0,0}\makebox(0,0)[lt]{\lineheight{1.25}\smash{\begin{tabular}[t]{l}$\scriptscriptstyle 1$\end{tabular}}}}%
    \put(0.02395898,0.69418708){\color[rgb]{0,0,0}\makebox(0,0)[lt]{\lineheight{1.25}\smash{\begin{tabular}[t]{l}$\scriptscriptstyle n-m-1$\end{tabular}}}}%
    \put(0,0){\includegraphics[width=\unitlength,page=2]{2.1-JWrecursionlemma1-RHS.pdf}}%
    \put(0.11539043,0.20641462){\color[rgb]{0,0,0}\makebox(0,0)[lt]{\lineheight{1.25}\smash{\begin{tabular}[t]{l}$\scriptscriptstyle n-2$\end{tabular}}}}%
    \put(0,0){\includegraphics[width=\unitlength,page=3]{2.1-JWrecursionlemma1-RHS.pdf}}%
  \end{picture}%
\endgroup%

}.
\end{equation}
\end{lemma}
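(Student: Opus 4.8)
The plan is to prove the identity by induction on $m$, using the Jones--Wenzl recursion (\ref{JW recursion}) applied to the projector $JW_{n-1}$ on the left-hand side as the main engine. The base case $m=0$ is immediate: the asserted scalar is $[n-1]/[n-1]=1$ and the two displayed pictures coincide verbatim (the case $m=1$ is equally immediate, being essentially a single term of (\ref{JW recursion}) read off directly). At the opposite extreme $m=n-1$ the scalar $[0]/[n-1]$ vanishes, which is consistent since the left-hand side then has $JW_{n-1}$ capped off and hence is zero by (\ref{uncappable}); one could equally well organize the whole argument as an induction on $n$, with this last observation furnishing the base case.

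For the inductive step, expand $JW_{n-1}$ on the left-hand side via (\ref{JW recursion}) as $(JW_{n-2}\otimes\mathrm{id}_1)+\tfrac{[n-2]}{[n-1]}\,E$, where $E$ denotes the cap--cup term built from two copies of $JW_{n-2}$. In the ambient configuration the extra ``identity'' strand produced by the first summand is capped directly against an output of $JW_{n-2}$, so that summand is killed by the uncappability axiom (\ref{uncappable}); only the $\tfrac{[n-2]}{[n-1]}E$ term survives.

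It then remains to recognize this surviving term. Using the absorption property (\ref{absorption}) to merge the lower $JW_{n-2}$ with the copy appearing inside $E$, and sliding the cap--cup fragment introduced by the recursion past the bundle of $m$ strands and past the distinguished single strand by a planar isotopy, one exhibits it as $\tfrac{[n-2]}{[n-1]}$ times the left-hand side of the lemma for the parameters $(n-1,m-1)$; note that $m-1\le n-2$, so the inductive hypothesis is available and the smaller projectors it needs all exist. Applying the inductive hypothesis and simplifying the product of scalars $\tfrac{[n-2]}{[n-1]}\cdot\tfrac{[(n-1)-(m-1)-1]}{[(n-1)-1]}=\tfrac{[n-2]}{[n-1]}\cdot\tfrac{[n-m-1]}{[n-2]}=\tfrac{[n-m-1]}{[n-1]}$, and checking that the right-hand side of the $(n-1,m-1)$ instance matches (after a final appeal to (\ref{absorption}) if needed) the right-hand side displayed in the statement, yields the claimed formula.

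The step I expect to be the real obstacle is this last identification of diagrams: one must verify carefully that the cap--cup produced by the recursion meshes with the turned-back strands in the ambient picture so that precisely the smaller instance of the lemma reappears, and that along the way no spurious closed loop (which would contribute a factor $-(q+q^{-1})$) nor cap-orientation sign has been dropped. Everything else is routine bookkeeping with (\ref{uncappable}) and (\ref{absorption}).
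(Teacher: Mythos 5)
Your proof is correct and is essentially the paper's own argument: the paper establishes this lemma by repeatedly applying the recursion (\ref{JW recursion}) at the spot occupied by $JW_{n-1}$ and killing one of the two resulting terms with the uncappable property (\ref{uncappable}) at each stage, which is exactly your induction on $m$ with the telescoping product $\prod_{j}\tfrac{[n-j-1]}{[n-j]}=\tfrac{[n-m-1]}{[n-1]}$ made explicit. The paper only sketches this and defers the diagram bookkeeping you rightly flag as the delicate point to \cite[Lemma 2]{MV94} and \cite[Lemma 4.29]{STWZ23}.
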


\begin{proof}
The identity can be proven by repeatedly applying the recursion (\ref{JW recursion}) to the spot occupied by $JW_{n-1}$ and using the uncappable property (\ref{uncappable}) of the blank $JW$ projector. Each time the recursion is applied, only one of the two terms survives. See \cite[Lemma 2]{MV94}  or \cite[Lemma 4.29]{STWZ23} for a full proof of a more general identity.
\end{proof}

The following sequel to Lemma \ref{triangle} will be used by us to create terms later involved in a telescoping sum in the proof of Theorem \ref{Steinberg skein identities}.
\begin{lemma}\label{telescoping}
Suppose that $JW_k$ exists for $1 \leq k \leq n-1.$ Then the following identity holds for any $m \leq n-2.$
\begin{equation}
\centerv {
\def\svgscale{0.6}
\begingroup%
  \makeatletter%
  \providecommand\color[2][]{%
    \errmessage{(Inkscape) Color is used for the text in Inkscape, but the package 'color.sty' is not loaded}%
    \renewcommand\color[2][]{}%
  }%
  \providecommand\transparent[1]{%
    \errmessage{(Inkscape) Transparency is used (non-zero) for the text in Inkscape, but the package 'transparent.sty' is not loaded}%
    \renewcommand\transparent[1]{}%
  }%
  \providecommand\rotatebox[2]{#2}%
  \newcommand*\fsize{\dimexpr\f@size pt\relax}%
  \newcommand*\lineheight[1]{\fontsize{\fsize}{#1\fsize}\selectfont}%
  \ifx\svgwidth\undefined%
    \setlength{\unitlength}{115.63581812bp}%
    \ifx\svgscale\undefined%
      \relax%
    \else%
      \setlength{\unitlength}{\unitlength * \real{\svgscale}}%
    \fi%
  \else%
    \setlength{\unitlength}{\svgwidth}%
  \fi%
  \global\let\svgwidth\undefined%
  \global\let\svgscale\undefined%
  \makeatother%
  \begin{picture}(1,0.85797463)%
    \lineheight{1}%
    \setlength\tabcolsep{0pt}%
    \put(0,0){\includegraphics[width=\unitlength,page=1]{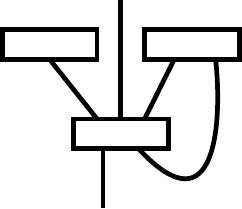}}%
    \put(0.67205195,0.44464093){\color[rgb]{0,0,0}\makebox(0,0)[lt]{\lineheight{1.25}\smash{\begin{tabular}[t]{l}$\scriptscriptstyle m$\end{tabular}}}}%
    \put(0.50849586,0.77870328){\color[rgb]{0,0,0}\makebox(0,0)[lt]{\lineheight{1.25}\smash{\begin{tabular}[t]{l}$\scriptscriptstyle 1$\end{tabular}}}}%
    \put(0.75506164,0.16199399){\color[rgb]{0,0,0}\makebox(0,0)[lt]{\lineheight{1.25}\smash{\begin{tabular}[t]{l}$\scriptscriptstyle 1$\end{tabular}}}}%
    \put(0.38126687,0.27838273){\color[rgb]{0,0,0}\makebox(0,0)[lt]{\lineheight{1.25}\smash{\begin{tabular}[t]{l}$\scriptscriptstyle n-1$\end{tabular}}}}%
  \end{picture}%
\endgroup%

}
=
\tfrac{[n-m-1]}{[n-1]}
\centerv {
\def\svgscale{0.6}
\begingroup%
  \makeatletter%
  \providecommand\color[2][]{%
    \errmessage{(Inkscape) Color is used for the text in Inkscape, but the package 'color.sty' is not loaded}%
    \renewcommand\color[2][]{}%
  }%
  \providecommand\transparent[1]{%
    \errmessage{(Inkscape) Transparency is used (non-zero) for the text in Inkscape, but the package 'transparent.sty' is not loaded}%
    \renewcommand\transparent[1]{}%
  }%
  \providecommand\rotatebox[2]{#2}%
  \newcommand*\fsize{\dimexpr\f@size pt\relax}%
  \newcommand*\lineheight[1]{\fontsize{\fsize}{#1\fsize}\selectfont}%
  \ifx\svgwidth\undefined%
    \setlength{\unitlength}{115.63581812bp}%
    \ifx\svgscale\undefined%
      \relax%
    \else%
      \setlength{\unitlength}{\unitlength * \real{\svgscale}}%
    \fi%
  \else%
    \setlength{\unitlength}{\svgwidth}%
  \fi%
  \global\let\svgwidth\undefined%
  \global\let\svgscale\undefined%
  \makeatother%
  \begin{picture}(1,0.85840815)%
    \lineheight{1}%
    \setlength\tabcolsep{0pt}%
    \put(0,0){\includegraphics[width=\unitlength,page=1]{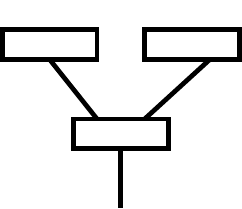}}%
    \put(0.51009211,0.78189604){\color[rgb]{0,0,0}\makebox(0,0)[lt]{\lineheight{1.25}\smash{\begin{tabular}[t]{l}$\scriptscriptstyle 1$\end{tabular}}}}%
    \put(0.38126683,0.2783828){\color[rgb]{0,0,0}\makebox(0,0)[lt]{\lineheight{1.25}\smash{\begin{tabular}[t]{l}$\scriptscriptstyle n-2$\end{tabular}}}}%
    \put(0,0){\includegraphics[width=\unitlength,page=2]{2.1-JWrecursionlemma2-RHS-term1.pdf}}%
    \put(0.75582966,0.44309077){\color[rgb]{0,0,0}\makebox(0,0)[lt]{\lineheight{1.25}\smash{\begin{tabular}[t]{l}$\scriptscriptstyle m$\end{tabular}}}}%
  \end{picture}%
\endgroup%

}
+
\tfrac{[n-m-2]}{[n-1]}
\centerv {
\def\svgscale{0.6}
\begingroup%
  \makeatletter%
  \providecommand\color[2][]{%
    \errmessage{(Inkscape) Color is used for the text in Inkscape, but the package 'color.sty' is not loaded}%
    \renewcommand\color[2][]{}%
  }%
  \providecommand\transparent[1]{%
    \errmessage{(Inkscape) Transparency is used (non-zero) for the text in Inkscape, but the package 'transparent.sty' is not loaded}%
    \renewcommand\transparent[1]{}%
  }%
  \providecommand\rotatebox[2]{#2}%
  \newcommand*\fsize{\dimexpr\f@size pt\relax}%
  \newcommand*\lineheight[1]{\fontsize{\fsize}{#1\fsize}\selectfont}%
  \ifx\svgwidth\undefined%
    \setlength{\unitlength}{115.63580731bp}%
    \ifx\svgscale\undefined%
      \relax%
    \else%
      \setlength{\unitlength}{\unitlength * \real{\svgscale}}%
    \fi%
  \else%
    \setlength{\unitlength}{\svgwidth}%
  \fi%
  \global\let\svgwidth\undefined%
  \global\let\svgscale\undefined%
  \makeatother%
  \begin{picture}(1,0.85840823)%
    \lineheight{1}%
    \setlength\tabcolsep{0pt}%
    \put(0,0){\includegraphics[width=\unitlength,page=1]{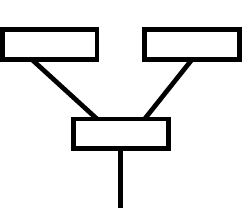}}%
    \put(0.51009212,0.78189609){\color[rgb]{0,0,0}\makebox(0,0)[lt]{\lineheight{1.25}\smash{\begin{tabular}[t]{l}$\scriptscriptstyle 1$\end{tabular}}}}%
    \put(0.38126686,0.2783828){\color[rgb]{0,0,0}\makebox(0,0)[lt]{\lineheight{1.25}\smash{\begin{tabular}[t]{l}$\scriptscriptstyle n-2$\end{tabular}}}}%
    \put(0,0){\includegraphics[width=\unitlength,page=2]{2.1-JWrecursionlemma2-RHS-term2.pdf}}%
    \put(0.72469746,0.39486282){\color[rgb]{0,0,0}\rotatebox{52.127712}{\makebox(0,0)[lt]{\lineheight{1.25}\smash{\begin{tabular}[t]{l}$\scriptscriptstyle m+1$\end{tabular}}}}}%
  \end{picture}%
\endgroup%

}.
\end{equation}
\end{lemma}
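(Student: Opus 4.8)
The plan is to prove this by imitating the proof of Lemma \ref{triangle}: repeatedly apply the Jones--Wenzl recursion (\ref{JW recursion}) to the projector $JW_{n-1}$ appearing on the left, and at each stage use the uncappable property (\ref{uncappable}) of the blank projector to discard terms. The only structural difference from Lemma \ref{triangle} is that the configuration here carries one extra through-strand adjacent to the $m$-strand; this extra strand obstructs exactly one of the cap-killing steps, so that at the last application of the recursion \emph{both} terms survive instead of one, and these two surviving terms are precisely the two diagrams on the right-hand side.

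Concretely, I would first apply (\ref{JW recursion}) to $JW_{n-1}$ to write it as $JW_{n-2}\otimes\mathrm{Id}_1$ plus the turn-back term with coefficient $[n-2]/[n-1]$, tracking how the detached strand and the turn-back cup and cap interact with the rest of the picture. In the identity term the residual configuration is, after (\ref{uncappable}) removes the spurious cap, of exactly the shape of the right-hand side with $JW_{n-2}$ and parameter $m$; in the turn-back term the cup fuses the extra strand with the $m$-strand, leaving the right-hand-side shape with $JW_{n-2}$ and parameter $m+1$. If one prefers, the same conclusion follows by first fusing the $m$-strand and the extra strand into an $(m{+}1)$-bundle via an inserted $JW_{m+1}$ --- legal since $m+1\le n-1$ --- applying Lemma \ref{triangle} with parameter $m+1$ to obtain the $m+1$ term, and handling the turn-back correction from that insertion with Lemma \ref{triangle} at parameter $m$ to obtain the $m$ term. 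Either way, all other candidate terms vanish by uncappability, exactly as in Lemma \ref{triangle}.

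The main obstacle is the coefficient bookkeeping: one must check that the recursion coefficient $[n-2]/[n-1]$ (respectively the factor $[m]/[m+1]$ in the second approach) combines with the quantum-integer ratios $[n-m-1]/[n-1]$ and $[n-m-2]/[n-1]$ supplied by Lemma \ref{triangle} to leave exactly the two scalars in the statement, with no leftover factors, and that every cup used is genuinely capping $JW_{n-1}$, $JW_{n-2}$, or the blank projector so that (\ref{uncappable}) applies. The hypothesis that $JW_k$ exists for all $1\le k\le n-1$ together with $m\le n-2$ ensures that every projector invoked ($JW_{n-1}$, $JW_{n-2}$, $JW_{m+1}$) exists, so there is no obstruction beyond this routine verification.
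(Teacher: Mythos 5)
Your overall instinct --- that this is Lemma \ref{triangle} with one extra strand, and that the two terms on the right come from the one stage of the Jones--Wenzl recursion at which both terms survive --- is sound, but the proposal has a real gap exactly where you flag it: the coefficients. Your primary concrete route applies the recursion (\ref{JW recursion}) \emph{once} to $JW_{n-1}$ and asserts that the identity term and the turn-back term are already the two diagrams on the right-hand side. A single application produces coefficients $1$ and $[n-2]/[n-1]$, which agree with the stated $[n-m-1]/[n-1]$ and $[n-m-2]/[n-1]$ only when $m=0$. For general $m$ one must iterate, and the coefficients force a survival pattern opposite to the one you describe: the first $m$ applications must each have a unique surviving term, accumulating the telescoping product $\prod_{i=1}^{m}[n-1-i]/[n-i]=[n-m-1]/[n-1]$, and only the \emph{final} application has both terms survive, the turn-back contributing the extra factor $[n-m-2]/[n-m-1]$. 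Tracking which term dies at which stage, and why the extra strand changes this only at the last stage, is the entire content of the lemma and is not carried out. Also, ``after (\ref{uncappable}) removes the spurious cap'' is not what uncappability does: a cap on a Jones--Wenzl projector annihilates the whole term rather than being erased from it.

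For comparison, the paper sidesteps all of this by ordering the operations differently: it applies Lemma \ref{triangle} as a black box \emph{first}, which produces the prefactor $[n-m-1]/[n-1]$ together with a $JW_{n-m-1}$ in the resulting diagram, and then applies the recursion exactly once to that $JW_{n-m-1}$ before absorbing. The identity term keeps the prefactor $[n-m-1]/[n-1]$, and the turn-back term multiplies it by $[n-m-2]/[n-m-1]$, so both stated coefficients fall out with no further work. Your second suggested route (fusing the $m$-strand with the extra strand through an inserted $JW_{m+1}$ and invoking Lemma \ref{triangle} at parameters $m+1$ and $m$) is genuinely different and can be made to work, but it is not free either: the insertion carries a factor $-[m]/[m+1]$ on the correction term, and you must identify where the compensating $-[m+1]/[m]$ comes from (a partial trace of the shape (\ref{JW trace})) before Lemma \ref{triangle} at parameter $m$ can deliver the clean $[n-m-1]/[n-1]$. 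Until one of these coefficient computations is actually performed, the proposal remains a plan rather than a proof.
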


\begin{proof}
Apply Lemma \ref{triangle} and then apply the $JW$ recursion (\ref{JW recursion}) once on the resulting $JW_{n-m-1}$ before using the absorption property (\ref{absorption}).
\end{proof}

\section{The Jones-Wenzl projector $JW_{2n-1}$ at a root of unity}

For the rest of the paper, we assume that $q$ is a root of unity and that $n$ is the smallest positive integer such that $q^n \in \{-1,1\}.$ In this case, for $n>1$ we have that $[n]=0$ and $[k] \neq 0$ for all $1 \leq k <n.$

The recursive formula (\ref{JW recursion}) for Jones-Wenzl projectors guarantees that the projectors $JW_1,\dots, JW_{n-1}$ exist. Although for $n>1$ we have $[n]=0,$ the element $JW_{2n-1}$ exists and is given by the following formula in terms of $JW_{n-1}$.

\begin{equation}\label{JW_2n-1}
\begin{tikzpicture}[baseline=6ex]
\draw [ultra thick] (0,0)--(0,2);
\node [draw, minimum width=.75cm, ultra thick, fill = white] (node) at (0,1) {$2n-1$};
\end{tikzpicture}=
\begin{tikzpicture}[baseline=6ex]
\draw [ultra thick] (0,0)--(0,2);
\node [draw, minimum width=.75cm, ultra thick, fill = white] (node) at (0,1) {$n-1$};
\draw [ultra thick] (.75,0)--(.75,2);
\draw [ultra thick] (1.5,0)--(1.5,2);
\node [draw, minimum width=.75cm, ultra thick, fill = white] (node) at (1.5,1) {$n-1$};
\end{tikzpicture}+\sum_{k=1}^{n-1}(-1)^k\left(
\begin{tikzpicture}[baseline=6ex]
\draw [ultra thick] (0,0)--(0,2);
\draw [ultra thick] (.75,1.4) arc (0: -180: .25);
\draw [ultra thick] (.75, 1.4)--(.75,2);
\draw [ultra thick] (1.5,0)--(1.5,2);
\draw [ultra thick] (.75,.6) arc (180:0:.25);
\draw [ultra thick] (.75,0)--(.75,.6);
\draw [ultra thick] (.4,0)--(1.1,2);
\node (node) at (.75,2) [above] {$k$};
\node (node) at (1.1,2) [above] {$1$};
\node (node) at (.75,0) [below] {$k$};
\node (node) at (.4,0) [below] {$1$};
\node [draw, minimum width=.75cm, ultra thick, fill=white] (node) at (0,1.5) {$n-1$};
\node [draw, minimum width=.75cm, ultra thick, fill = white] (node) at (1.5,0.5) {$n-1$};
\end{tikzpicture}+
\begin{tikzpicture}[baseline=6ex]
\draw [ultra thick] (0,0)--(0,2);
\draw [ultra thick] (1.5,0)--(1.5,2);
\draw [ultra thick] (.75,.6) arc (0:180:.25);
\draw [ultra thick] (.75,.6)--(.75,0);
\draw [ultra thick] (.75,1.4) arc (-180:0:.25);
\draw [ultra thick] (.75,1.4)--(.75,2);
\draw [ultra thick] (1.1,0)--(.4,2);
\node (node) at (.4,2) [above] {$1$};
\node (node) at (.75,2) [above] {$k$};
\node (node) at (.75,0) [below] {$k$};
\node (node) at (1.1,0) [below] {$1$};
\node [draw, minimum width=.75cm, ultra thick, fill=white] (node) at (0,0.5) {$n-1$};
\node [draw, minimum width=.75cm, ultra thick, fill = white] (node) at (1.5,1.5) {$n-1$};
\end{tikzpicture}\right).
\end{equation}

Later, in the proof of Theorem \ref{Steinberg skein identities} we will make use of the following notation for the diagrams appearing in (\ref{JW_2n-1}):
\begin{equation}\label{JW diagram notation}
JW_{2n-1}=A_0+\sum_{k=1}^{n-1} (-1)^k(A_k+A_k').    
\end{equation}

It is easy to check that the element defined by (\ref{JW_2n-1}) satisfies the axioms \ref{axiom 1} and \ref{axiom 2} for $JW_{2n-1}$ using the fact that $JW_{n-1}$ does. See \cite[Proposition 2.7]{MS22} for a proof. The formula (\ref{JW_2n-1}) was found already (and in a more general setting of positive characteristic) from representation theoretic techniques in \cite{MS22}. In our setting, it is also possible to obtain the formula (\ref{JW_2n-1}) by specializing the expression for generic $q$ of $JW_{2n-1}$ from \cite{Mor17} in which coefficients of diagrams are certain ratios of quantum binomial coefficients. After canceling denominators of $[n]$ the coefficients can be specialized for $q$ a root of unity. Thus, we view the existence of $JW_{2n-1}$ at a root of unity as encoding ``most" of the miraculous cancellations from \cite{BW16,Bon19}, with the later Theorem \ref{Steinberg skein identities} and Lemma \ref{lem:ncross} encoding the rest of the cancellations.

\section{Quantum $\mathrm{SL}_2$ at a root of unity}\label{Quantum group root of unity}

We recall the fact that a quantum group admits a Frobenius homomorphism at a root of unity \cite{Lus90}. Our results will not mathematically rely on this fact, but a consequence of it serves as inspiration for our main skein identities.

Suppose $q$ is a root of unity and $n$ is the smallest positive integer such that $q^n \in \{-1,1\}.$ Set $t=q^{n^2} \in \{-1,1\}.$ For later use, we note here that it follows from checking cases that

\begin{equation}\label{t,q relation}
(-1)^{n-1}q^n=t.
\end{equation}

Lusztig's version \cite{Lus90} of the quantum group $U_q(sl_2)$ admits a Hopf algebra map called the Frobenius map denoted by $Fr: U_q(sl_2) \rightarrow U_t(sl_2).$ Similarly, Parshall and Wang \cite{PW91} describe a Hopf algebra map for the dual situation $Fr: \mathcal{O}_t(\mathrm{SL}_2) \rightarrow \mathcal{O}_q(\mathrm{SL}_2).$ The existence of either map gives a ribbon functor between the categories of representations $Fr: \text{Rep}_t(\mathrm{SL}_2) \rightarrow \text{Rep}_q(\mathrm{SL}_2).$

The representation category $\text{Rep}_t(\mathrm{SL}_2)$ is semisimple while $\text{Rep}_q(\mathrm{SL}_2)$ is not, unless $q=\pm 1.$ Let $V^{(t)}=V_1^{(t)}$ denote the defining $2\text{-dimensional}$ representation of $U_t(sl_2)$ and $V_k^{(t)}=\text{Sym}^k(V^{(t)})$ denote the $k+1\text{-dimensional}$ irreducible representation of $U_t(sl_2).$ Similarly, let $V^{(q)}$ be the defining representation of $U_q(sl_2)$ and let $V_m^{(q)}$ denote the tilting module $\text{Sym}^m(V)$ whenever $m\equiv -1 \mod{n}$. The representation $V_{n-1}^{(q)}$ is a tilting module known as the Steinberg module and corresponds to $JW_{n-1}$. Similarly, the modules $V_{n-1+kn}^{(q)}$ are tilting and correspond to $JW_{n-1+kn}.$ The following identities hold for all $k\geq 1$ and are known as Steinberg tensor product formulas \cite{AW92}:

\begin{equation}\label{Tensor product formulae}
V_{n-1}^{(q)} \otimes Fr(V_k^{(t)})\cong V_{n-1+kn}^{(q)}.
\end{equation}

For more discussion of these formulas in the specific cases of $\mathrm{SL}_2$, see \cite[Section 2.5]{Ost08}.

\section{First Steinberg skein identities}

In the theory of skein modules, strands which appear to be labeled by the Frobenius representation arise in two common ways. For a knot $K$, the Frobenius image is $K^{[T_n]}$, the knot threaded by the polynomial $T_n.$ For an arc in the context of Muller skein algebras or stated skein algebras \cite{Mul16,LP19,BL22,KQ19}, the Frobenius image is $n$ parallel copies of the arc. In this section we will prove the following skein identities.

\begin{theorem}\label{Steinberg skein identities}
If $q$ is a root of unity and $n$ is the smallest positive integer such that $q^{n} \in \{-1,1\}$ then the following local skein identities hold in $\Sq(M)$, the skein module of any oriented $3\text{-manifold}$ $M.$

\begin{equation*}
\begin{tikzpicture}[baseline=6ex]
\draw [ultra thick] (0,0)--(0,2);
\node [draw, minimum width=.75cm, ultra thick, fill = white] (node) at (0,1) {$n-1$};
\draw [ultra thick] (.9,.25)--(.9,1.75);
\draw [ultra thick] (.9,1.75) arc (180:0:.25);
\draw [ultra thick] (1.4,1.75)--(1.4,.25);
\draw [ultra thick] (.9,.25) arc (-180:0:.25);
\filldraw [fill=white, draw=black] (1.15,1) circle [radius=.075];
\node (node) at (.9,1.75) [left] {$T_n$};
\end{tikzpicture}=
\begin{tikzpicture}[baseline=6ex]
\draw [ultra thick] (0,0)--(0,2);
\draw [ultra thick] (1.4,1.75) arc (0:90:.25);
\draw [ultra thick] (1.15,2) -- (.65,2);
\draw [ultra thick] (.65,2) arc (90:180:.25);
\draw [ultra thick] (.4,1.75)--(.4,.25);
\draw [ultra thick] (1.4,1.75)--(1.4,.25);
\draw [ultra thick] (1.4,.25) arc (0:-90:.25);
\draw [ultra thick] (1.15,0)--(.65,0);
\draw [ultra thick] (.65,0) arc (-90:-180:.25);
\node [draw, minimum width=.75cm, ultra thick, fill = white] (node) at (0,1) {$2n-1$};
\filldraw [fill=white, draw=black] (1.15,1) circle [radius=.075];
\node (node) at (.9,1.75) [right] {$n$};
\end{tikzpicture}
\text{ and }
\begin{tikzpicture}[baseline=6ex]
\draw [ultra thick] (0,0)--(0,2);
\node [draw, minimum width=.75cm, ultra thick, fill = white] (node) at (0,1) {$n-1$};
\draw [ultra thick] (.9,.25)--(.9,1.75);
\node (node) at (.9,1) [right] {$n$};
\node [draw, minimum width=.5cm, ultra thick, fill = white] (node) at (.9,1.75) {};
\node [draw, minimum width=.5cm, ultra thick, fill = white] (node) at (.9,.25) {};
\end{tikzpicture}=
\begin{tikzpicture}[baseline=6ex]
\draw [ultra thick] (-.25,0)--(-.25,2);
\draw [ultra thick] (.9, 1.75)--(.25,1);
\draw [ultra thick] (.9,.25)--(.25,1);
\node [draw, minimum width=.75cm, ultra thick, fill = white] (node) at (0,1) {$2n-1$};
\node [draw, minimum width=.5cm, ultra thick, fill = white] (node) at (.9,1.75) {};
\node [draw, minimum width=.5cm, ultra thick, fill = white] (node) at (.9,.25) {};
\node (node) at (.55,1.4) [right] {$n$};
\node (node) at (.55,.6) [right] {$n$};
\end{tikzpicture}.
\end{equation*}
\end{theorem}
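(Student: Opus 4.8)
The plan is to prove both identities by expanding the projector $JW_{2n-1}$ occurring on the right-hand side via the explicit formula (\ref{JW_2n-1}), written as $JW_{2n-1}=A_0+\sum_{k=1}^{n-1}(-1)^k(A_k+A_k')$ in the notation of (\ref{JW diagram notation}), where $A_0=JW_{n-1}\otimes\text{Id}_1\otimes JW_{n-1}$ and each $A_k$, $A_k'$ is a diagram built from two copies of $JW_{n-1}$ together with $k$ cups, $k$ caps, and a single strand running diagonally between them. The term $A_0$ will reproduce the main part of the left-hand side, and the content of the proof is to show that the correction terms $A_k$, $A_k'$ collapse to whatever remains.

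For the second (arc) identity, the $n$-strand bundle appearing in the picture terminates at two uncappable blank Jones--Wenzl projectors, and I expect the correction terms to vanish outright: after using Lemma \ref{crossing absorption}, together with Lemma \ref{JW crossing} to resolve the crossing of the diagonal strand with the bundle, each $A_k$ and $A_k'$ is left with a cup or a cap running into one of these blank projectors, which is zero by the uncappable axiom (\ref{uncappable}). Only the $A_0$ term survives, and it becomes the left-hand side once the extra copy of $JW_{n-1}$ in $A_0$ is absorbed into the neighbouring blank projector via (\ref{absorption}).

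For the first (knot) identity, I would first rewrite the threaded loop on the left using the Chebyshev identity (\ref{Tk and Sk}), $T_n=xS_{n-1}-2S_{n-2}$, so that $K^{[T_n]}=K^{[xS_{n-1}]}-2\,K^{[S_{n-2}]}$; by (\ref{JW closed}) the loop $K$ coloured by $JW_m$ is $K^{[S_m]}$, so $K^{[xS_{n-1}]}$ is $n$ parallel copies of $K$ carrying $JW_{n-1}$ on $n-1$ of the strands. Looping the appropriate $n$ of the $2n-1$ strands of $JW_{2n-1}$ around the (possibly non-contractible) region, the $A_0$ term becomes exactly $JW_{n-1}$ placed next to $K^{[xS_{n-1}]}$, so the identity reduces to showing
\begin{equation*}
\sum_{k=1}^{n-1}(-1)^k(A_k+A_k')_{\mathrm{looped}}=-2\,JW_{n-1}\,\sqcup\,K^{[S_{n-2}]}.
\end{equation*}
To prove this I would drag the diagonal strand of each looped $A_k$ once around the region using Lemma \ref{crossing absorption}; the framing phases this produces should be arranged so that $A_k$ and $A_k'$ yield the same configuration of two copies of $JW_{n-1}$ joined by $k$ through-strands, and Lemma \ref{telescoping} then rewrites this $k$-th term through the $(k-1)$-st and $(k+1)$-st with the coefficients $[n-m-1]/[n-1]$. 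The sum over $k$ telescopes, and the two boundary terms that survive are evaluated using (\ref{JW trace}), (\ref{JW closed}) and the sign relation (\ref{t,q relation}) $(-1)^{n-1}q^n=t$, yielding exactly $-2\,JW_{n-1}\sqcup K^{[S_{n-2}]}$. Because every move used is local and threading is functorial with respect to the embedded annulus, the possible non-contractibility of the region never enters the argument.

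I expect the last step to be the main obstacle: assembling the telescoping sum for the knot identity requires keeping precise track of the $q$-power framing phases picked up when a strand is dragged around a possibly knotted or non-separating loop, of the rational coefficients emitted by Lemma \ref{telescoping}, and of the signs in (\ref{t,q relation}), and then verifying that the surviving boundary terms of the telescope assemble into exactly the coefficient $-2$ and the projector $JW_{n-2}$, with nothing left over. The remaining steps are routine applications of the lemmas already in place.
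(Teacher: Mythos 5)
Your proposal follows essentially the same route as the paper's proof: expand $JW_{2n-1}$ via the explicit formula (\ref{JW_2n-1}), observe that for the arc identity only the $A_0$ term survives by uncappability and absorption, and for the knot identity show that the looped $A_j$ and $A_j'$ families each telescope via Lemma \ref{telescoping} down to a single boundary term equal to $-S_{n-2}$ threaded next to $JW_{n-1}$, so that together with the $xS_{n-1}$ contribution of $A_0$ the Chebyshev relation $T_n=xS_{n-1}-2S_{n-2}$ from (\ref{Tk and Sk}) closes the argument. The one point where you overcomplicate matters is your anticipated ``main obstacle'' of tracking framing phases: the diagrams $A_j$ are crossingless and looping the $n$ strands around the possibly non-contractible region is just an isotopy inside the embedded annulus, so Lemmas \ref{crossing absorption} and \ref{JW crossing} and the sign relation (\ref{t,q relation}) are never needed here --- the only inputs are that isotopy, Lemma \ref{telescoping}, and (\ref{JW closed}).
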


Since the skein module is functorial with respect to embeddings \cite{Prz91}, the first identity can be applied to any framed knot in $M.$

\begin{notation}
We introduce the following notation involving green strands. A green knot is equal to the knot threaded by the polynomial $T_n$ and a green arc ending at $JW$ projectors is equal to $n$ parallel copies of the arc.

\begin{equation}\label{green strands}
\begin{tikzpicture}[baseline=6ex]
\draw [ultra thick, color=ForestGreen] (1.4,1) arc (0:360:.4);
\filldraw [fill=white, draw=black] (1,1) circle [radius=.075];
\end{tikzpicture}=
\begin{tikzpicture}[baseline=6ex]
\draw [ultra thick] (1.4,1) arc (0:360:.4);
\filldraw [fill=white, draw=black] (1,1) circle [radius=.075];
\node (node) at (1,1.4) [above] {$T_n$};
\end{tikzpicture} \text{ and }
\begin{tikzpicture}[baseline=6ex]
\draw [ultra thick, color=ForestGreen] (.9,.25)--(.9,1.75);
\node [draw, minimum width=.5cm, ultra thick, fill = white] (node) at (.9,1.75) {};
\node [draw, minimum width=.5cm, ultra thick, fill = white] (node) at (.9,.25) {};    
\end{tikzpicture}=
\begin{tikzpicture}[baseline=6ex]
\node (node) at (.9,1) [right] {$n$};
\draw [ultra thick] (.9,.25)--(.9,1.75);
\node [draw, minimum width=.5cm, ultra thick, fill = white] (node) at (.9,1.75) {};
\node [draw, minimum width=.5cm, ultra thick, fill = white] (node) at (.9,.25) {};    
\end{tikzpicture}.
\end{equation}
\end{notation}

Using green strands, both identities of Theorem \ref{Steinberg skein identities} can be locally depicted by the single identity

\begin{equation}\label{(green strand Steinberg identity)}
\begin{tikzpicture}[baseline=6ex]
\draw [ultra thick] (0,0)--(0,2);
\node [draw, minimum width=.75cm, ultra thick, fill = white] (node) at (0,1) {$n-1$};
\draw [ultra thick, color=ForestGreen] (.9,0)--(.9,2);
\end{tikzpicture}=
\begin{tikzpicture}[baseline=6ex]
\draw [ultra thick] (-.25,0)--(-.25,2);
\draw [ultra thick, color=ForestGreen] (.9, 2)--(.25,1);
\draw [ultra thick, color=ForestGreen] (.9,0)--(.25,1);
\node [draw, minimum width=.75cm, ultra thick, fill = white] (node) at (0,1) {$2n-1$};
\node (node) at (-.25,2) [above] {$n-1$};
\node (node) at (-.25,0) [below] {$n-1$};
\end{tikzpicture}.
\end{equation}

We view these skein identities as incarnations of the first Steinberg tensor product formula from Equation (\ref{Tensor product formulae}) which states

\begin{equation}
V_{n-1}^{(q)} \otimes Fr(V^{(t)})\cong V_{2n-1}^{(q)}.
\end{equation}

Here $V_{n-1}^{(q)}$ corresponds diagrammatically to $JW_{n-1}$ and $V_{2n-1}^{(q)}$  corresponds to the element $JW_{2n-1}$ given in (\ref{JW_2n-1}).

\begin{proof}[Proof of Theorem \ref{Steinberg skein identities}]
The second identity follows from applying the definition of $JW_{2n-1}$ (\ref{JW_2n-1}) to the right side of the identity. After applying the absorption property (\ref{absorption}) of $JW$ projectors and killing diagrams with a cap attached to a $JW$ projector, the only surviving term is the left hand side of the second identity.

For the first identity, we begin with the right hand side. Recall the notation of $A_j$ from (\ref{JW diagram notation}), in which we write $JW_{2n-1}=A_0+\sum_{j=1}^{n-1}(-1)^j(A_j+A_j')$. We then compute

\begin{align*}
&\sum_{j=1}^{n-1} (-1)^j \centerv { \def\svgscale{0.6}
\begingroup%
  \makeatletter%
  \providecommand\color[2][]{%
    \errmessage{(Inkscape) Color is used for the text in Inkscape, but the package 'color.sty' is not loaded}%
    \renewcommand\color[2][]{}%
  }%
  \providecommand\transparent[1]{%
    \errmessage{(Inkscape) Transparency is used (non-zero) for the text in Inkscape, but the package 'transparent.sty' is not loaded}%
    \renewcommand\transparent[1]{}%
  }%
  \providecommand\rotatebox[2]{#2}%
  \newcommand*\fsize{\dimexpr\f@size pt\relax}%
  \newcommand*\lineheight[1]{\fontsize{\fsize}{#1\fsize}\selectfont}%
  \ifx\svgwidth\undefined%
    \setlength{\unitlength}{129.3495022bp}%
    \ifx\svgscale\undefined%
      \relax%
    \else%
      \setlength{\unitlength}{\unitlength * \real{\svgscale}}%
    \fi%
  \else%
    \setlength{\unitlength}{\svgwidth}%
  \fi%
  \global\let\svgwidth\undefined%
  \global\let\svgscale\undefined%
  \makeatother%
  \begin{picture}(1,0.87658495)%
    \lineheight{1}%
    \setlength\tabcolsep{0pt}%
    \put(0,0){\includegraphics[width=\unitlength,page=1]{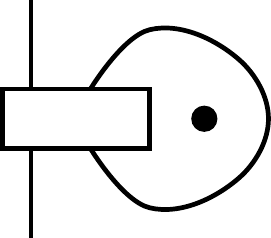}}%
    \put(0.07088844,0.59169482){\color[rgb]{0,0,0}\rotatebox{90}{\makebox(0,0)[lt]{\lineheight{1.25}\smash{\begin{tabular}[t]{l}$\scriptstyle n-1$\end{tabular}}}}}%
    \put(0.0734854,0.04029221){\color[rgb]{0,0,0}\rotatebox{90}{\makebox(0,0)[lt]{\lineheight{1.25}\smash{\begin{tabular}[t]{l}$\scriptstyle n-1$\end{tabular}}}}}%
    \put(0.08926594,0.41378095){\color[rgb]{0,0,0}\makebox(0,0)[lt]{\lineheight{1.25}\smash{\begin{tabular}[t]{l}$A_j$\end{tabular}}}}%
    \put(0.61521689,0.67675644){\color[rgb]{0,0,0}\makebox(0,0)[lt]{\lineheight{1.25}\smash{\begin{tabular}[t]{l}$\scriptstyle n$\end{tabular}}}}%
  \end{picture}%
\endgroup%

}= \sum_{j=1}^{n-1} (-1)^j\centerv { \def\svgscale{0.5}
\begingroup%
  \makeatletter%
  \providecommand\color[2][]{%
    \errmessage{(Inkscape) Color is used for the text in Inkscape, but the package 'color.sty' is not loaded}%
    \renewcommand\color[2][]{}%
  }%
  \providecommand\transparent[1]{%
    \errmessage{(Inkscape) Transparency is used (non-zero) for the text in Inkscape, but the package 'transparent.sty' is not loaded}%
    \renewcommand\transparent[1]{}%
  }%
  \providecommand\rotatebox[2]{#2}%
  \newcommand*\fsize{\dimexpr\f@size pt\relax}%
  \newcommand*\lineheight[1]{\fontsize{\fsize}{#1\fsize}\selectfont}%
  \ifx\svgwidth\undefined%
    \setlength{\unitlength}{267.13360644bp}%
    \ifx\svgscale\undefined%
      \relax%
    \else%
      \setlength{\unitlength}{\unitlength * \real{\svgscale}}%
    \fi%
  \else%
    \setlength{\unitlength}{\svgwidth}%
  \fi%
  \global\let\svgwidth\undefined%
  \global\let\svgscale\undefined%
  \makeatother%
  \begin{picture}(1,0.58362377)%
    \lineheight{1}%
    \setlength\tabcolsep{0pt}%
    \put(0,0){\includegraphics[width=\unitlength,page=1]{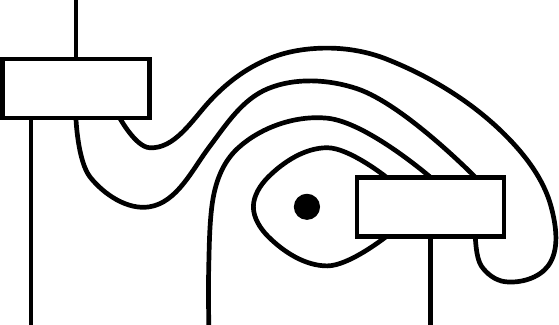}}%
    \put(0.04322379,0.41258483){\color[rgb]{0,0,0}\makebox(0,0)[lt]{\lineheight{1.25}\smash{\begin{tabular}[t]{l}$\scriptstyle n-1$\end{tabular}}}}%
    \put(0.67100555,0.2016155){\color[rgb]{0,0,0}\makebox(0,0)[lt]{\lineheight{1.25}\smash{\begin{tabular}[t]{l}$\scriptstyle n-1$\end{tabular}}}}%
    \put(0.46939015,0.21222687){\color[rgb]{0,0,0}\rotatebox{27.301137}{\makebox(0,0)[lt]{\lineheight{1.25}\smash{\begin{tabular}[t]{l}$\scriptscriptstyle n-j-1$\end{tabular}}}}}%
    \put(0.68896999,0.49747555){\color[rgb]{0,0,0}\makebox(0,0)[lt]{\lineheight{1.25}\smash{\begin{tabular}[t]{l}$\scriptstyle 1$\end{tabular}}}}%
    \put(0.15104986,0.16978148){\color[rgb]{0,0,0}\makebox(0,0)[lt]{\lineheight{1.25}\smash{\begin{tabular}[t]{l}$\scriptscriptstyle j-1$\end{tabular}}}}%
    \put(0.32972999,0.03057661){\color[rgb]{0,0,0}\makebox(0,0)[lt]{\lineheight{1.25}\smash{\begin{tabular}[t]{l}$\scriptstyle 1$\end{tabular}}}}%
    \put(0.751685,0.0088987){\color[rgb]{0,0,0}\rotatebox{90}{\makebox(0,0)[lt]{\lineheight{1.25}\smash{\begin{tabular}[t]{l}$\scriptscriptstyle j-1$\end{tabular}}}}}%
    \put(0.03558264,0.01951004){\color[rgb]{0,0,0}\rotatebox{90}{\makebox(0,0)[lt]{\lineheight{1.25}\smash{\begin{tabular}[t]{l}$\scriptscriptstyle n-j-1$\end{tabular}}}}}%
  \end{picture}%
\endgroup%

}\\
&=\sum_{j=1}^{n-1} (-1)^j\left(\tfrac{[n-j]}{[n-1]}
\centerv { \def\svgscale{0.5}
\begingroup%
  \makeatletter%
  \providecommand\color[2][]{%
    \errmessage{(Inkscape) Color is used for the text in Inkscape, but the package 'color.sty' is not loaded}%
    \renewcommand\color[2][]{}%
  }%
  \providecommand\transparent[1]{%
    \errmessage{(Inkscape) Transparency is used (non-zero) for the text in Inkscape, but the package 'transparent.sty' is not loaded}%
    \renewcommand\transparent[1]{}%
  }%
  \providecommand\rotatebox[2]{#2}%
  \newcommand*\fsize{\dimexpr\f@size pt\relax}%
  \newcommand*\lineheight[1]{\fontsize{\fsize}{#1\fsize}\selectfont}%
  \ifx\svgwidth\undefined%
    \setlength{\unitlength}{200.6751882bp}%
    \ifx\svgscale\undefined%
      \relax%
    \else%
      \setlength{\unitlength}{\unitlength * \real{\svgscale}}%
    \fi%
  \else%
    \setlength{\unitlength}{\svgwidth}%
  \fi%
  \global\let\svgwidth\undefined%
  \global\let\svgscale\undefined%
  \makeatother%
  \begin{picture}(1,0.70627701)%
    \lineheight{1}%
    \setlength\tabcolsep{0pt}%
    \put(0,0){\includegraphics[width=\unitlength,page=1]{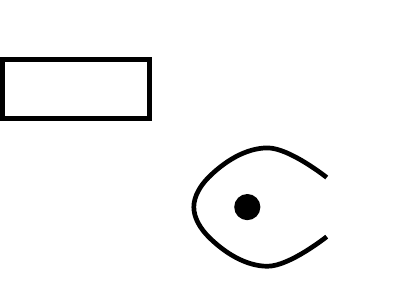}}%
    \put(0.05753839,0.47859443){\color[rgb]{0,0,0}\makebox(0,0)[lt]{\lineheight{1.25}\smash{\begin{tabular}[t]{l}$\scriptstyle n-1$\end{tabular}}}}%
    \put(0.75196978,0.19775753){\color[rgb]{0,0,0}\makebox(0,0)[lt]{\lineheight{1.25}\smash{\begin{tabular}[t]{l}$\scriptstyle n-2$\end{tabular}}}}%
    \put(0.48358455,0.21188315){\color[rgb]{0,0,0}\rotatebox{27.301137}{\makebox(0,0)[lt]{\lineheight{1.25}\smash{\begin{tabular}[t]{l}$\scriptscriptstyle n-j-1$\end{tabular}}}}}%
    \put(0,0){\includegraphics[width=\unitlength,page=2]{2.3-steinberg-prelim-Ajloop-Bj.pdf}}%
    \put(0.57062773,0.49439389){\color[rgb]{0,0,0}\makebox(0,0)[lt]{\lineheight{1.25}\smash{\begin{tabular}[t]{l}$\scriptscriptstyle j-1$\end{tabular}}}}%
    \put(0.89551518,-0.00000005){\color[rgb]{0,0,0}\rotatebox{90}{\makebox(0,0)[lt]{\lineheight{1.25}\smash{\begin{tabular}[t]{l}$\scriptscriptstyle j-1$\end{tabular}}}}}%
    \put(0.07623378,0.07062765){\color[rgb]{0,0,0}\rotatebox{90}{\makebox(0,0)[lt]{\lineheight{1.25}\smash{\begin{tabular}[t]{l}$\scriptscriptstyle n-j$\end{tabular}}}}}%
  \end{picture}%
\endgroup%

}+
\tfrac{[n-j-1]}{[n-1]}
\centerv { \def\svgscale{0.5}
\begingroup%
  \makeatletter%
  \providecommand\color[2][]{%
    \errmessage{(Inkscape) Color is used for the text in Inkscape, but the package 'color.sty' is not loaded}%
    \renewcommand\color[2][]{}%
  }%
  \providecommand\transparent[1]{%
    \errmessage{(Inkscape) Transparency is used (non-zero) for the text in Inkscape, but the package 'transparent.sty' is not loaded}%
    \renewcommand\transparent[1]{}%
  }%
  \providecommand\rotatebox[2]{#2}%
  \newcommand*\fsize{\dimexpr\f@size pt\relax}%
  \newcommand*\lineheight[1]{\fontsize{\fsize}{#1\fsize}\selectfont}%
  \ifx\svgwidth\undefined%
    \setlength{\unitlength}{200.6751882bp}%
    \ifx\svgscale\undefined%
      \relax%
    \else%
      \setlength{\unitlength}{\unitlength * \real{\svgscale}}%
    \fi%
  \else%
    \setlength{\unitlength}{\svgwidth}%
  \fi%
  \global\let\svgwidth\undefined%
  \global\let\svgscale\undefined%
  \makeatother%
  \begin{picture}(1,0.70627701)%
    \lineheight{1}%
    \setlength\tabcolsep{0pt}%
    \put(0,0){\includegraphics[width=\unitlength,page=1]{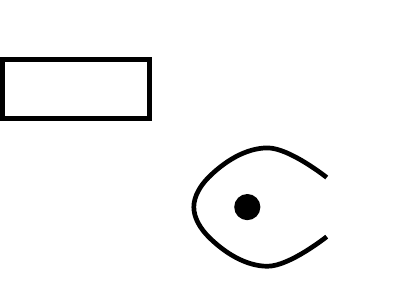}}%
    \put(0.05753839,0.47859445){\color[rgb]{0,0,0}\makebox(0,0)[lt]{\lineheight{1.25}\smash{\begin{tabular}[t]{l}$\scriptstyle n-1$\end{tabular}}}}%
    \put(0.75196972,0.1977575){\color[rgb]{0,0,0}\makebox(0,0)[lt]{\lineheight{1.25}\smash{\begin{tabular}[t]{l}$\scriptstyle n-2$\end{tabular}}}}%
    \put(0.48358455,0.21188316){\color[rgb]{0,0,0}\rotatebox{27.301137}{\makebox(0,0)[lt]{\lineheight{1.25}\smash{\begin{tabular}[t]{l}$\scriptscriptstyle n-j-2$\end{tabular}}}}}%
    \put(0,0){\includegraphics[width=\unitlength,page=2]{2.3-steinberg-prelim-Ajloop-Bj+1.pdf}}%
    \put(0.58475321,0.48026832){\color[rgb]{0,0,0}\makebox(0,0)[lt]{\lineheight{1.25}\smash{\begin{tabular}[t]{l}$\scriptscriptstyle j$\end{tabular}}}}%
    \put(0.89551512,-0.00000008){\color[rgb]{0,0,0}\rotatebox{90}{\makebox(0,0)[lt]{\lineheight{1.25}\smash{\begin{tabular}[t]{l}$\scriptscriptstyle j$\end{tabular}}}}}%
    \put(0.07623378,0.07062762){\color[rgb]{0,0,0}\rotatebox{90}{\makebox(0,0)[lt]{\lineheight{1.25}\smash{\begin{tabular}[t]{l}$\scriptscriptstyle n-j-1$\end{tabular}}}}}%
  \end{picture}%
\endgroup%

}\right)\\
&=(-1)
\centerv { \def\svgscale{0.6}
\begingroup%
  \makeatletter%
  \providecommand\color[2][]{%
    \errmessage{(Inkscape) Color is used for the text in Inkscape, but the package 'color.sty' is not loaded}%
    \renewcommand\color[2][]{}%
  }%
  \providecommand\transparent[1]{%
    \errmessage{(Inkscape) Transparency is used (non-zero) for the text in Inkscape, but the package 'transparent.sty' is not loaded}%
    \renewcommand\transparent[1]{}%
  }%
  \providecommand\rotatebox[2]{#2}%
  \newcommand*\fsize{\dimexpr\f@size pt\relax}%
  \newcommand*\lineheight[1]{\fontsize{\fsize}{#1\fsize}\selectfont}%
  \ifx\svgwidth\undefined%
    \setlength{\unitlength}{171.20373583bp}%
    \ifx\svgscale\undefined%
      \relax%
    \else%
      \setlength{\unitlength}{\unitlength * \real{\svgscale}}%
    \fi%
  \else%
    \setlength{\unitlength}{\svgwidth}%
  \fi%
  \global\let\svgwidth\undefined%
  \global\let\svgscale\undefined%
  \makeatother%
  \begin{picture}(1,0.66228594)%
    \lineheight{1}%
    \setlength\tabcolsep{0pt}%
    \put(0,0){\includegraphics[width=\unitlength,page=1]{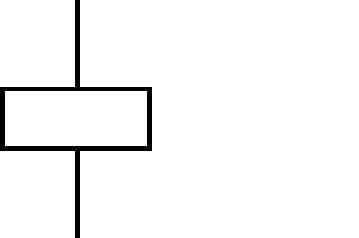}}%
    \put(0.09629334,0.31458582){\color[rgb]{0,0,0}\makebox(0,0)[lt]{\lineheight{1.25}\smash{\begin{tabular}[t]{l}$n-1$\end{tabular}}}}%
    \put(0,0){\includegraphics[width=\unitlength,page=2]{2.3-steinberg-prelim-Ajloop-laststep.pdf}}%
    \put(0.79375713,0.31382505){\color[rgb]{0,0,0}\makebox(0,0)[lt]{\lineheight{1.25}\smash{\begin{tabular}[t]{l}$\scriptstyle n-2$\end{tabular}}}}%
    \put(0,0){\includegraphics[width=\unitlength,page=3]{2.3-steinberg-prelim-Ajloop-laststep.pdf}}%
  \end{picture}%
\endgroup%

}
= (-1)
\centerv { \def\svgscale{0.6}
\begingroup%
  \makeatletter%
  \providecommand\color[2][]{%
    \errmessage{(Inkscape) Color is used for the text in Inkscape, but the package 'color.sty' is not loaded}%
    \renewcommand\color[2][]{}%
  }%
  \providecommand\transparent[1]{%
    \errmessage{(Inkscape) Transparency is used (non-zero) for the text in Inkscape, but the package 'transparent.sty' is not loaded}%
    \renewcommand\transparent[1]{}%
  }%
  \providecommand\rotatebox[2]{#2}%
  \newcommand*\fsize{\dimexpr\f@size pt\relax}%
  \newcommand*\lineheight[1]{\fontsize{\fsize}{#1\fsize}\selectfont}%
  \ifx\svgwidth\undefined%
    \setlength{\unitlength}{144.04484065bp}%
    \ifx\svgscale\undefined%
      \relax%
    \else%
      \setlength{\unitlength}{\unitlength * \real{\svgscale}}%
    \fi%
  \else%
    \setlength{\unitlength}{\svgwidth}%
  \fi%
  \global\let\svgwidth\undefined%
  \global\let\svgscale\undefined%
  \makeatother%
  \begin{picture}(1,0.78715646)%
    \lineheight{1}%
    \setlength\tabcolsep{0pt}%
    \put(0,0){\includegraphics[width=\unitlength,page=1]{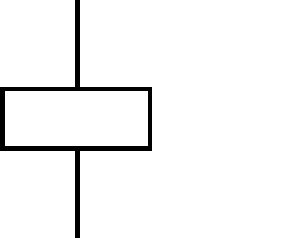}}%
    \put(0.11444895,0.37389932){\color[rgb]{0,0,0}\makebox(0,0)[lt]{\lineheight{1.25}\smash{\begin{tabular}[t]{l}$n-1$\end{tabular}}}}%
    \put(0.72494348,0.60771423){\color[rgb]{0,0,0}\makebox(0,0)[lt]{\lineheight{1.25}\smash{\begin{tabular}[t]{l}$S_{n-2}$\end{tabular}}}}%
    \put(0,0){\includegraphics[width=\unitlength,page=2]{2.3-steinberg-prelim-Ajloop-sum-Sn2.pdf}}%
  \end{picture}%
\endgroup%

}.
\end{align*}

The first equality is given by applying the definition of $A_j$ and applying an isotopy. The second equality is given by applying Lemma \ref{telescoping}. The final line is deduced by recognizing that the alternating sum telescopes. The corresponding computation involving $A_j'$ gives the same result by symmetry. The contribution of the $A_0$ term to the polynomial threading the knot is $xS_{n-1}.$ Thus, we conclude that

\begin{equation*}
\begin{tikzpicture}[baseline=6ex]
\draw [ultra thick] (0,0)--(0,2);
\draw [ultra thick] (1.4,1.75) arc (0:90:.25);
\draw [ultra thick] (1.15,2) -- (.65,2);
\draw [ultra thick] (.65,2) arc (90:180:.25);
\draw [ultra thick] (.4,1.75)--(.4,.25);
\draw [ultra thick] (1.4,1.75)--(1.4,.25);
\draw [ultra thick] (1.4,.25) arc (0:-90:.25);
\draw [ultra thick] (1.15,0)--(.65,0);
\draw [ultra thick] (.65,0) arc (-90:-180:.25);
\node [draw, minimum width=.75cm, ultra thick, fill = white] (node) at (0,1) {$2n-1$};
\filldraw [fill=white, draw=black] (1.15,1) circle [radius=.075];
\node (node) at (.9,1.75) [right] {$n$};
\end{tikzpicture}=
\begin{tikzpicture}[baseline=6ex]
\draw [ultra thick] (0,0)--(0,2);
\node [draw, minimum width=.75cm, ultra thick, fill = white] (node) at (0,1) {$n-1$};
\draw [ultra thick] (.9,.25)--(.9,1.75);
\draw [ultra thick] (.9,1.75) arc (180:0:.25);
\draw [ultra thick] (1.4,1.75)--(1.4,.25);
\draw [ultra thick] (.9,.25) arc (-180:0:.25);
\filldraw [fill=white, draw=black] (1.15,1) circle [radius=.075];
\node (node) at (1.4,1.75) [right] {$xS_{n-1}-2S_{n-2}$};
\end{tikzpicture}.
\end{equation*}

Using the identity $T_n=xS_{n-1}-2S_{n-2}$ from (\ref{Tk and Sk}) we complete the proof.
\end{proof}

\section{A new proof of the Chebyshev-Frobenius homomorphism}

In this section we show that the skein identities from Theorem \ref{Steinberg skein identities} give a new proof of the existence of the Chebyshev-Frobenius homomorphism of Bonahon-Wong \cite{BW16} in Theorem \ref{Chebyshev-Frobenius}. We first begin by showing Theorem \ref{Steinberg skein identities} gives a new proof of the following identity.

\begin{lemma}\label{lem:ncross}
Suppose $q^{1/2}$ is a root of unity and $n$ the smallest positive integer such that $q^{n} \in \{-1,1\}$. Let $t^{1/2}=(q^{1/2})^{n^2}.$ Then the following identity holds.

\begin{equation} \label{eqn:ncross}
\centerv { \def\svgscale{0.5}
\begingroup%
  \makeatletter%
  \providecommand\color[2][]{%
    \errmessage{(Inkscape) Color is used for the text in Inkscape, but the package 'color.sty' is not loaded}%
    \renewcommand\color[2][]{}%
  }%
  \providecommand\transparent[1]{%
    \errmessage{(Inkscape) Transparency is used (non-zero) for the text in Inkscape, but the package 'transparent.sty' is not loaded}%
    \renewcommand\transparent[1]{}%
  }%
  \providecommand\rotatebox[2]{#2}%
  \newcommand*\fsize{\dimexpr\f@size pt\relax}%
  \newcommand*\lineheight[1]{\fontsize{\fsize}{#1\fsize}\selectfont}%
  \ifx\svgwidth\undefined%
    \setlength{\unitlength}{158.15550316bp}%
    \ifx\svgscale\undefined%
      \relax%
    \else%
      \setlength{\unitlength}{\unitlength * \real{\svgscale}}%
    \fi%
  \else%
    \setlength{\unitlength}{\svgwidth}%
  \fi%
  \global\let\svgwidth\undefined%
  \global\let\svgscale\undefined%
  \makeatother%
  \begin{picture}(1,0.64153696)%
    \lineheight{1}%
    \setlength\tabcolsep{0pt}%
    \put(0,0){\includegraphics[width=\unitlength,page=1]{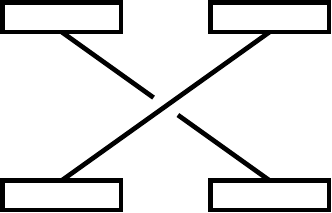}}%
    \put(0.26546933,0.22219113){\color[rgb]{0,0,0}\makebox(0,0)[lt]{\lineheight{1.25}\smash{\begin{tabular}[t]{l}$\scriptstyle n$\end{tabular}}}}%
    \put(0.22219105,0.41258952){\color[rgb]{0,0,0}\makebox(0,0)[lt]{\lineheight{1.25}\smash{\begin{tabular}[t]{l}$\scriptstyle n$\end{tabular}}}}%
  \end{picture}%
\endgroup%

}
= t^{1/2}
\centerv { \def\svgscale{0.5}
\begingroup%
  \makeatletter%
  \providecommand\color[2][]{%
    \errmessage{(Inkscape) Color is used for the text in Inkscape, but the package 'color.sty' is not loaded}%
    \renewcommand\color[2][]{}%
  }%
  \providecommand\transparent[1]{%
    \errmessage{(Inkscape) Transparency is used (non-zero) for the text in Inkscape, but the package 'transparent.sty' is not loaded}%
    \renewcommand\transparent[1]{}%
  }%
  \providecommand\rotatebox[2]{#2}%
  \newcommand*\fsize{\dimexpr\f@size pt\relax}%
  \newcommand*\lineheight[1]{\fontsize{\fsize}{#1\fsize}\selectfont}%
  \ifx\svgwidth\undefined%
    \setlength{\unitlength}{158.15549235bp}%
    \ifx\svgscale\undefined%
      \relax%
    \else%
      \setlength{\unitlength}{\unitlength * \real{\svgscale}}%
    \fi%
  \else%
    \setlength{\unitlength}{\svgwidth}%
  \fi%
  \global\let\svgwidth\undefined%
  \global\let\svgscale\undefined%
  \makeatother%
  \begin{picture}(1,0.641537)%
    \lineheight{1}%
    \setlength\tabcolsep{0pt}%
    \put(0,0){\includegraphics[width=\unitlength,page=1]{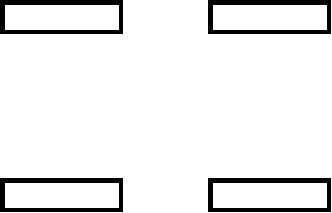}}%
    \put(0.20426794,0.32973009){\color[rgb]{0,0,0}\makebox(0,0)[lt]{\lineheight{1.25}\smash{\begin{tabular}[t]{l}$\scriptstyle n$\end{tabular}}}}%
    \put(0.74436945,0.32973009){\color[rgb]{0,0,0}\makebox(0,0)[lt]{\lineheight{1.25}\smash{\begin{tabular}[t]{l}$\scriptstyle n$\end{tabular}}}}%
    \put(0,0){\includegraphics[width=\unitlength,page=2]{2.4-ncross-end-vertterm.pdf}}%
  \end{picture}%
\endgroup%

}
+ t^{-1/2}
\centerv { \def\svgscale{0.5}
\begingroup%
  \makeatletter%
  \providecommand\color[2][]{%
    \errmessage{(Inkscape) Color is used for the text in Inkscape, but the package 'color.sty' is not loaded}%
    \renewcommand\color[2][]{}%
  }%
  \providecommand\transparent[1]{%
    \errmessage{(Inkscape) Transparency is used (non-zero) for the text in Inkscape, but the package 'transparent.sty' is not loaded}%
    \renewcommand\transparent[1]{}%
  }%
  \providecommand\rotatebox[2]{#2}%
  \newcommand*\fsize{\dimexpr\f@size pt\relax}%
  \newcommand*\lineheight[1]{\fontsize{\fsize}{#1\fsize}\selectfont}%
  \ifx\svgwidth\undefined%
    \setlength{\unitlength}{158.15551397bp}%
    \ifx\svgscale\undefined%
      \relax%
    \else%
      \setlength{\unitlength}{\unitlength * \real{\svgscale}}%
    \fi%
  \else%
    \setlength{\unitlength}{\svgwidth}%
  \fi%
  \global\let\svgwidth\undefined%
  \global\let\svgscale\undefined%
  \makeatother%
  \begin{picture}(1,0.64153692)%
    \lineheight{1}%
    \setlength\tabcolsep{0pt}%
    \put(0,0){\includegraphics[width=\unitlength,page=1]{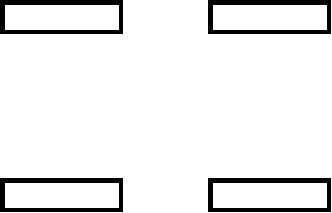}}%
    \put(0.52399186,0.22006716){\color[rgb]{0,0,0}\makebox(0,0)[lt]{\lineheight{1.25}\smash{\begin{tabular}[t]{l}$\scriptstyle n$\end{tabular}}}}%
    \put(0.41934587,0.38349957){\color[rgb]{0,0,0}\makebox(0,0)[lt]{\lineheight{1.25}\smash{\begin{tabular}[t]{l}$\scriptstyle n$\end{tabular}}}}%
    \put(0,0){\includegraphics[width=\unitlength,page=2]{2.4-ncross-end-cupcapterm.pdf}}%
  \end{picture}%
\endgroup%

}
.
\end{equation}

\end{lemma}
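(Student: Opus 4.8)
The identity \eqref{eqn:ncross} asserts that the green (Frobenius) strands satisfy the Kauffman bracket skein relation with the parameter $q^{1/2}$ replaced by $t^{1/2}$; since $t^{1/2}=(q^{1/2})^{n^2}$, the straight-through coefficient $t^{1/2}$ is literally $q^{n^2/2}$, so the substance of the statement is that the complicated cabled crossing collapses to just two terms with these specific scalars. The plan is to deduce this from the Steinberg skein identities of Theorem \ref{Steinberg skein identities}. First I would introduce an auxiliary Steinberg projector $JW_{n-1}$ alongside one of the two green $n$-strand bundles and use the green-strand identity \eqref{(green strand Steinberg identity)} to fuse it into a $JW_{2n-1}$, so that the bare crossing of two green bundles becomes a crossing of a green bundle past the $2n-1$ strands of a Jones-Wenzl projector; the auxiliary projector is then removed at the end, e.g. by a non-zero-divisor argument in the spirit of Lemma \ref{Steinberg loop}.

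With the crossing now adjacent to $JW_{2n-1}$, I would resolve it strand by strand using Lemma \ref{JW crossing}, together with Lemma \ref{crossing absorption} for absorbing crossings into the projector. Each of the $n$ elementary crossings of the passing green bundle over the $2n-1$ strands of $JW_{2n-1}$ produces a ``straight'' term carrying a factor $q^{(2n-1)/2}$ and a ``turnback'' term carrying $q^{-(2n-1)/2}$; the absorption and uncappable properties \eqref{absorption} and \eqref{uncappable}, applied to $JW_{2n-1}$, then eliminate all but one of the configurations arising from the turnback terms. Collecting the accumulated powers of $q$ and the sign $(-1)^{n-1}$ contributed by the closed loops in the defining formula \eqref{JW_2n-1} for $JW_{2n-1}$, and simplifying with $(-1)^{n-1}q^{n}=t$ from \eqref{t,q relation}, I expect to arrive at precisely the coefficients $t^{\pm 1/2}$; undoing the fusion through \eqref{(green strand Steinberg identity)} one last time then recognizes the two surviving diagrams as those on the right of \eqref{eqn:ncross}. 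That only two terms appear is the diagrammatic shadow of $Fr(V)\otimes Fr(V)\cong V_0\oplus Fr(V_2)$: the self-braiding of an object with two summands satisfies a quadratic relation.

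The hard part will be controlling the turnback contributions. Passing an $n$-strand bundle over $JW_{2n-1}$ a priori generates a profusion of diagrams, and one must show that once the $JW_{2n-1}$ projectors (equivalently, $JW_{n-1}$ together with a green V) absorb all the caps, only the identity configuration and a single cup-cap configuration survive, with coefficients that recombine into perfect powers of $t^{1/2}$. This is exactly the residual batch of the ``miraculous cancellations'' of \cite{BW16,Bon19}; the role of Theorem \ref{Steinberg skein identities} is to have already folded the bulk of these cancellations into the single projector $JW_{2n-1}$, so that what remains is a bounded and explicit computation. As a sanity check I would first confirm the $n=1$ case, where $t=q$ and \eqref{eqn:ncross} is the defining Kauffman relation, and then carry out $n=2$ by hand.
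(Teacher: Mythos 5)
Your overall architecture coincides with the paper's: fuse an auxiliary $JW_{n-1}$ with one of the green cables via the Steinberg identity (\ref{(green strand Steinberg identity)}) to produce a $JW_{2n-1}$, resolve the cabled crossing against that projector, expand $JW_{2n-1}$ by the explicit formula (\ref{JW_2n-1}) so that only two diagrams survive, and convert the resulting powers of $q$ into $t^{\pm 1/2}$ using (\ref{t,q relation}). However, two of your execution steps would not go through as written. First, the auxiliary projector: the paper does not introduce $JW_{n-1}$ from outside and divide it out afterwards; it pulls $JW_{n-1}$ out of one of the blank Jones--Wenzl boxes that are already present at the endpoints of the green cables in (\ref{eqn:ncross}), using the absorption property (\ref{absorption}) read backwards, so nothing ever needs to be cancelled. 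Your proposed removal ``by a non-zero-divisor argument in the spirit of Lemma \ref{Steinberg loop}'' is not available here: that lemma concerns closed curves around punctures in a surface skein algebra, and is precisely what the paper reserves for the later proof of Theorem \ref{Chebyshev-Frobenius}, after everything has been closed up. For a local identity with open endpoints the blank boxes are the only legitimate source of the extra projector.

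Second, the crossing resolution and its bookkeeping. Resolving the passing $n$-cable strand by strand against the full $2n-1$ strands, with factors $q^{\pm(2n-1)/2}$ per crossing, gives a straight-through coefficient $q^{n(2n-1)/2}=q^{n^2-n/2}$, which is not $t^{1/2}=q^{n^2/2}$ for $n>1$; and it produces up to $2^n$ turnback configurations whose cancellation you assert but do not control. The paper avoids both problems: it absorbs $n(n-1)$ of the $n^2$ elementary crossings wholesale into $JW_{2n-1}$ by Lemma \ref{crossing absorption} (a single factor $q^{n(n-1)/2}$, with no branching), and then applies Lemma \ref{JW crossing} exactly once, to the one remaining crossing of a $1$-labeled strand with the $n$-cable, yielding precisely two terms with factors $q^{\pm n/2}$. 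The surviving cup-cap diagram and its sign $(-1)^{n-1}$ then come from the $A_{n-1}'$ term of the expansion (\ref{JW_2n-1}) applied in the $q^{-n/2}$ branch (with $A_0$ surviving in the other branch), not from turnbacks of the passing bundle or from closed loops. With these two corrections your outline becomes the paper's proof; your $n=1$ sanity check and the representation-theoretic heuristic at the end are both sound.
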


\begin{proof}

Starting from the left-hand-side of (\ref{eqn:ncross}), we bring a copy of $JW_{n-1}$ out of one of the blank Jones-Wenzl projectors and then use Theorem \ref{Steinberg skein identities} in the following way.

\begin{equation*}
\centerv { \def\svgscale{0.5}

}
\overset{(\ref{absorption})}{=}
\centerv { \def\svgscale{0.5}
\begingroup%
  \makeatletter%
  \providecommand\color[2][]{%
    \errmessage{(Inkscape) Color is used for the text in Inkscape, but the package 'color.sty' is not loaded}%
    \renewcommand\color[2][]{}%
  }%
  \providecommand\transparent[1]{%
    \errmessage{(Inkscape) Transparency is used (non-zero) for the text in Inkscape, but the package 'transparent.sty' is not loaded}%
    \renewcommand\transparent[1]{}%
  }%
  \providecommand\rotatebox[2]{#2}%
  \newcommand*\fsize{\dimexpr\f@size pt\relax}%
  \newcommand*\lineheight[1]{\fontsize{\fsize}{#1\fsize}\selectfont}%
  \ifx\svgwidth\undefined%
    \setlength{\unitlength}{158.15551397bp}%
    \ifx\svgscale\undefined%
      \relax%
    \else%
      \setlength{\unitlength}{\unitlength * \real{\svgscale}}%
    \fi%
  \else%
    \setlength{\unitlength}{\svgwidth}%
  \fi%
  \global\let\svgwidth\undefined%
  \global\let\svgscale\undefined%
  \makeatother%
  \begin{picture}(1,0.82076818)%
    \lineheight{1}%
    \setlength\tabcolsep{0pt}%
    \put(0,0){\includegraphics[width=\unitlength,page=1]{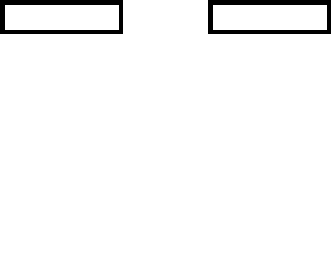}}%
    \put(0.26544976,0.60418217){\color[rgb]{0,0,0}\makebox(0,0)[lt]{\lineheight{1.25}\smash{\begin{tabular}[t]{l}$\scriptstyle n$\end{tabular}}}}%
    \put(0.11175925,0.22006695){\color[rgb]{0,0,0}\makebox(0,0)[lt]{\lineheight{1.25}\smash{\begin{tabular}[t]{l}$\scriptstyle 1$\end{tabular}}}}%
    \put(0,0){\includegraphics[width=\unitlength,page=2]{2.4-ncross-step1.9.pdf}}%
    \put(0.40338436,0.31814423){\color[rgb]{0,0,0}\makebox(0,0)[lt]{\lineheight{1.25}\smash{\begin{tabular}[t]{l}$\scriptscriptstyle n-1$\end{tabular}}}}%
    \put(0,0){\includegraphics[width=\unitlength,page=3]{2.4-ncross-step1.9.pdf}}%
    \put(0.8024178,0.28565195){\color[rgb]{0,0,0}\makebox(0,0)[lt]{\lineheight{1.25}\smash{\begin{tabular}[t]{l}$\scriptstyle n$\end{tabular}}}}%
    \put(0,0){\includegraphics[width=\unitlength,page=4]{2.4-ncross-step1.9.pdf}}%
  \end{picture}%
\endgroup%

}
\overset{(\ref{(green strand Steinberg identity)})}{=}
\centerv { \def\svgscale{0.5}
\begingroup%
  \makeatletter%
  \providecommand\color[2][]{%
    \errmessage{(Inkscape) Color is used for the text in Inkscape, but the package 'color.sty' is not loaded}%
    \renewcommand\color[2][]{}%
  }%
  \providecommand\transparent[1]{%
    \errmessage{(Inkscape) Transparency is used (non-zero) for the text in Inkscape, but the package 'transparent.sty' is not loaded}%
    \renewcommand\transparent[1]{}%
  }%
  \providecommand\rotatebox[2]{#2}%
  \newcommand*\fsize{\dimexpr\f@size pt\relax}%
  \newcommand*\lineheight[1]{\fontsize{\fsize}{#1\fsize}\selectfont}%
  \ifx\svgwidth\undefined%
    \setlength{\unitlength}{158.15550316bp}%
    \ifx\svgscale\undefined%
      \relax%
    \else%
      \setlength{\unitlength}{\unitlength * \real{\svgscale}}%
    \fi%
  \else%
    \setlength{\unitlength}{\svgwidth}%
  \fi%
  \global\let\svgwidth\undefined%
  \global\let\svgscale\undefined%
  \makeatother%
  \begin{picture}(1,0.82076838)%
    \lineheight{1}%
    \setlength\tabcolsep{0pt}%
    \put(0,0){\includegraphics[width=\unitlength,page=1]{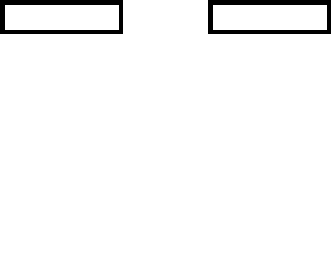}}%
    \put(0.25514449,0.5964533){\color[rgb]{0,0,0}\makebox(0,0)[lt]{\lineheight{1.25}\smash{\begin{tabular}[t]{l}$\scriptstyle n$\end{tabular}}}}%
    \put(0.11175925,0.22006711){\color[rgb]{0,0,0}\makebox(0,0)[lt]{\lineheight{1.25}\smash{\begin{tabular}[t]{l}$\scriptstyle 1$\end{tabular}}}}%
    \put(0.39351964,0.08839563){\color[rgb]{0,0,0}\rotatebox{34.008417}{\makebox(0,0)[lt]{\lineheight{1.25}\smash{\begin{tabular}[t]{l}$\scriptstyle n-1$\end{tabular}}}}}%
    \put(0,0){\includegraphics[width=\unitlength,page=2]{2.4-ncross-step2.pdf}}%
    \put(0.50606874,0.34552919){\color[rgb]{0,0,0}\makebox(0,0)[lt]{\lineheight{1.25}\smash{\begin{tabular}[t]{l}$\scriptstyle 2n-1$\end{tabular}}}}%
    \put(0,0){\includegraphics[width=\unitlength,page=3]{2.4-ncross-step2.pdf}}%
    \put(0.75988571,0.56273088){\color[rgb]{0,0,0}\makebox(0,0)[lt]{\lineheight{1.25}\smash{\begin{tabular}[t]{l}$\scriptstyle n-1$\end{tabular}}}}%
    \put(0.77780887,0.18634468){\color[rgb]{0,0,0}\makebox(0,0)[lt]{\lineheight{1.25}\smash{\begin{tabular}[t]{l}$\scriptstyle n$\end{tabular}}}}%
  \end{picture}%
\endgroup%

}.
\end{equation*}

We then absorb $n(n-1)$ crossings into $JW_{2n-1}$ by using Lemma \ref{crossing absorption} to obtain
\begin{equation*}
q^{n(n-1)/2} 
\centerv { \def\svgscale{0.5}
\begingroup%
  \makeatletter%
  \providecommand\color[2][]{%
    \errmessage{(Inkscape) Color is used for the text in Inkscape, but the package 'color.sty' is not loaded}%
    \renewcommand\color[2][]{}%
  }%
  \providecommand\transparent[1]{%
    \errmessage{(Inkscape) Transparency is used (non-zero) for the text in Inkscape, but the package 'transparent.sty' is not loaded}%
    \renewcommand\transparent[1]{}%
  }%
  \providecommand\rotatebox[2]{#2}%
  \newcommand*\fsize{\dimexpr\f@size pt\relax}%
  \newcommand*\lineheight[1]{\fontsize{\fsize}{#1\fsize}\selectfont}%
  \ifx\svgwidth\undefined%
    \setlength{\unitlength}{158.15550316bp}%
    \ifx\svgscale\undefined%
      \relax%
    \else%
      \setlength{\unitlength}{\unitlength * \real{\svgscale}}%
    \fi%
  \else%
    \setlength{\unitlength}{\svgwidth}%
  \fi%
  \global\let\svgwidth\undefined%
  \global\let\svgscale\undefined%
  \makeatother%
  \begin{picture}(1,0.82076838)%
    \lineheight{1}%
    \setlength\tabcolsep{0pt}%
    \put(0,0){\includegraphics[width=\unitlength,page=1]{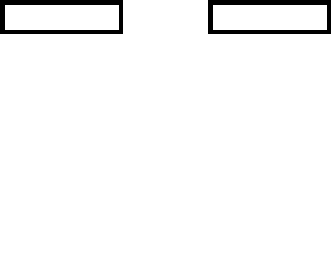}}%
    \put(0.24803131,0.57732591){\color[rgb]{0,0,0}\makebox(0,0)[lt]{\lineheight{1.25}\smash{\begin{tabular}[t]{l}$\scriptstyle n$\end{tabular}}}}%
    \put(0.11175929,0.2200671){\color[rgb]{0,0,0}\makebox(0,0)[lt]{\lineheight{1.25}\smash{\begin{tabular}[t]{l}$\scriptstyle 1$\end{tabular}}}}%
    \put(0.3935197,0.08839556){\color[rgb]{0,0,0}\rotatebox{34.008417}{\makebox(0,0)[lt]{\lineheight{1.25}\smash{\begin{tabular}[t]{l}$\scriptstyle n-1$\end{tabular}}}}}%
    \put(0,0){\includegraphics[width=\unitlength,page=2]{2.4-ncross-step2.1.pdf}}%
    \put(0.50606878,0.34552918){\color[rgb]{0,0,0}\makebox(0,0)[lt]{\lineheight{1.25}\smash{\begin{tabular}[t]{l}$\scriptstyle 2n-1$\end{tabular}}}}%
    \put(0,0){\includegraphics[width=\unitlength,page=3]{2.4-ncross-step2.1.pdf}}%
    \put(0.82446515,0.55192101){\color[rgb]{0,0,0}\makebox(0,0)[lt]{\lineheight{1.25}\smash{\begin{tabular}[t]{l}$\scriptstyle n-1$\end{tabular}}}}%
    \put(0.77780891,0.18634468){\color[rgb]{0,0,0}\makebox(0,0)[lt]{\lineheight{1.25}\smash{\begin{tabular}[t]{l}$\scriptstyle n$\end{tabular}}}}%
    \put(0,0){\includegraphics[width=\unitlength,page=4]{2.4-ncross-step2.1.pdf}}%
  \end{picture}%
\endgroup%

}.
\end{equation*}
We then apply Lemma \ref{JW crossing} to the crossing of the $1$ and $n$ labeled strands to obtain
\begin{equation*}
q^{n(n-1)/2+n/2} 
\centerv { \def\svgscale{0.5}
\begingroup%
  \makeatletter%
  \providecommand\color[2][]{%
    \errmessage{(Inkscape) Color is used for the text in Inkscape, but the package 'color.sty' is not loaded}%
    \renewcommand\color[2][]{}%
  }%
  \providecommand\transparent[1]{%
    \errmessage{(Inkscape) Transparency is used (non-zero) for the text in Inkscape, but the package 'transparent.sty' is not loaded}%
    \renewcommand\transparent[1]{}%
  }%
  \providecommand\rotatebox[2]{#2}%
  \newcommand*\fsize{\dimexpr\f@size pt\relax}%
  \newcommand*\lineheight[1]{\fontsize{\fsize}{#1\fsize}\selectfont}%
  \ifx\svgwidth\undefined%
    \setlength{\unitlength}{158.15551397bp}%
    \ifx\svgscale\undefined%
      \relax%
    \else%
      \setlength{\unitlength}{\unitlength * \real{\svgscale}}%
    \fi%
  \else%
    \setlength{\unitlength}{\svgwidth}%
  \fi%
  \global\let\svgwidth\undefined%
  \global\let\svgscale\undefined%
  \makeatother%
  \begin{picture}(1,0.82076832)%
    \lineheight{1}%
    \setlength\tabcolsep{0pt}%
    \put(0,0){\includegraphics[width=\unitlength,page=1]{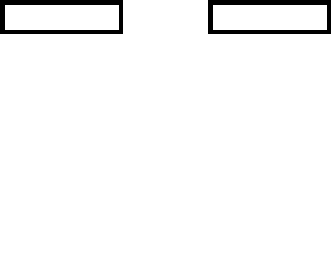}}%
    \put(0.39351967,0.08839555){\color[rgb]{0,0,0}\rotatebox{34.008417}{\makebox(0,0)[lt]{\lineheight{1.25}\smash{\begin{tabular}[t]{l}$\scriptstyle n-1$\end{tabular}}}}}%
    \put(0,0){\includegraphics[width=\unitlength,page=2]{2.4-ncross-step3-term1.pdf}}%
    \put(0.50606871,0.34552917){\color[rgb]{0,0,0}\makebox(0,0)[lt]{\lineheight{1.25}\smash{\begin{tabular}[t]{l}$\scriptstyle 2n-1$\end{tabular}}}}%
    \put(0,0){\includegraphics[width=\unitlength,page=3]{2.4-ncross-step3-term1.pdf}}%
    \put(0.82446512,0.55192098){\color[rgb]{0,0,0}\makebox(0,0)[lt]{\lineheight{1.25}\smash{\begin{tabular}[t]{l}$\scriptstyle n-1$\end{tabular}}}}%
    \put(0.77780888,0.18634467){\color[rgb]{0,0,0}\makebox(0,0)[lt]{\lineheight{1.25}\smash{\begin{tabular}[t]{l}$\scriptstyle n$\end{tabular}}}}%
    \put(0,0){\includegraphics[width=\unitlength,page=4]{2.4-ncross-step3-term1.pdf}}%
    \put(0.6523468,0.61650037){\color[rgb]{0,0,0}\makebox(0,0)[lt]{\lineheight{1.25}\smash{\begin{tabular}[t]{l}$\scriptstyle 1$\end{tabular}}}}%
    \put(0.03223257,0.36557621){\color[rgb]{0,0,0}\makebox(0,0)[lt]{\lineheight{1.25}\smash{\begin{tabular}[t]{l}$\scriptstyle 1$\end{tabular}}}}%
    \put(0.24011427,0.65234671){\color[rgb]{0,0,0}\rotatebox{-42.83218}{\makebox(0,0)[lt]{\lineheight{1.25}\smash{\begin{tabular}[t]{l}$\scriptstyle n-1$\end{tabular}}}}}%
  \end{picture}%
\endgroup%

}
+ q^{n(n-1)/2-n/2} 
\centerv { \def\svgscale{0.5}
\begingroup%
  \makeatletter%
  \providecommand\color[2][]{%
    \errmessage{(Inkscape) Color is used for the text in Inkscape, but the package 'color.sty' is not loaded}%
    \renewcommand\color[2][]{}%
  }%
  \providecommand\transparent[1]{%
    \errmessage{(Inkscape) Transparency is used (non-zero) for the text in Inkscape, but the package 'transparent.sty' is not loaded}%
    \renewcommand\transparent[1]{}%
  }%
  \providecommand\rotatebox[2]{#2}%
  \newcommand*\fsize{\dimexpr\f@size pt\relax}%
  \newcommand*\lineheight[1]{\fontsize{\fsize}{#1\fsize}\selectfont}%
  \ifx\svgwidth\undefined%
    \setlength{\unitlength}{158.15551397bp}%
    \ifx\svgscale\undefined%
      \relax%
    \else%
      \setlength{\unitlength}{\unitlength * \real{\svgscale}}%
    \fi%
  \else%
    \setlength{\unitlength}{\svgwidth}%
  \fi%
  \global\let\svgwidth\undefined%
  \global\let\svgscale\undefined%
  \makeatother%
  \begin{picture}(1,0.82076832)%
    \lineheight{1}%
    \setlength\tabcolsep{0pt}%
    \put(0,0){\includegraphics[width=\unitlength,page=1]{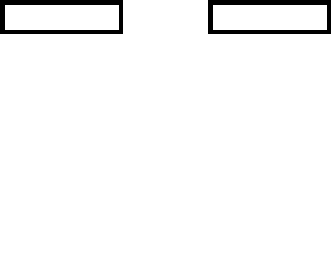}}%
    \put(0.39351968,0.08839554){\color[rgb]{0,0,0}\rotatebox{34.008417}{\makebox(0,0)[lt]{\lineheight{1.25}\smash{\begin{tabular}[t]{l}$\scriptstyle n-1$\end{tabular}}}}}%
    \put(0,0){\includegraphics[width=\unitlength,page=2]{2.4-ncross-step3-term2.pdf}}%
    \put(0.50606871,0.34552917){\color[rgb]{0,0,0}\makebox(0,0)[lt]{\lineheight{1.25}\smash{\begin{tabular}[t]{l}$\scriptstyle 2n-1$\end{tabular}}}}%
    \put(0,0){\includegraphics[width=\unitlength,page=3]{2.4-ncross-step3-term2.pdf}}%
    \put(0.82446512,0.55192098){\color[rgb]{0,0,0}\makebox(0,0)[lt]{\lineheight{1.25}\smash{\begin{tabular}[t]{l}$\scriptstyle n-1$\end{tabular}}}}%
    \put(0.77780888,0.18634467){\color[rgb]{0,0,0}\makebox(0,0)[lt]{\lineheight{1.25}\smash{\begin{tabular}[t]{l}$\scriptstyle n$\end{tabular}}}}%
    \put(0,0){\includegraphics[width=\unitlength,page=4]{2.4-ncross-step3-term2.pdf}}%
    \put(0.4702224,0.66814588){\color[rgb]{0,0,0}\makebox(0,0)[lt]{\lineheight{1.25}\smash{\begin{tabular}[t]{l}$\scriptstyle 1$\end{tabular}}}}%
    \put(0.07591301,0.22006702){\color[rgb]{0,0,0}\makebox(0,0)[lt]{\lineheight{1.25}\smash{\begin{tabular}[t]{l}$\scriptstyle 1$\end{tabular}}}}%
    \put(0.04295959,0.65234669){\color[rgb]{0,0,0}\rotatebox{-34.896941}{\makebox(0,0)[lt]{\lineheight{1.25}\smash{\begin{tabular}[t]{l}$\scriptstyle n-1$\end{tabular}}}}}%
    \put(0,0){\includegraphics[width=\unitlength,page=5]{2.4-ncross-step3-term2.pdf}}%
  \end{picture}%
\endgroup%

}.
\end{equation*}

We then replace $JW_{2n-1}$ in each diagram by the formula (\ref{JW_2n-1}) and use the properties (\ref{absorption}) and \ref{axiom 1}. For the first diagram, only the $A_0$ term survives and for the second diagram, only the $A_{n-1}'$ term survives, leaving us with

\begin{equation*}
q^{n^2/2} 
\centerv { \def\svgscale{0.5}

}
+ (-1)^{n-1}q^{n^2/2-n}
\centerv { \def\svgscale{0.5}

} .
\end{equation*} 

This is the desired result since $t^{1/2}=(q^{1/2})^{n^2}$ and $(-1)^{n-1}q^{-n}=t^{-1}$ by (\ref{t,q relation}).
\end{proof}

\begin{theorem}[Bonahon-Wong \cite{BW16}]\label{Chebyshev-Frobenius}
Let $M$ be an oriented $3\text{-manifold}.$ Suppose $q^{1/2}$ is a root of unity. Let $n$ be the smallest positive integer such that $q^n \in \{-1,1\}$ and set $t^{1/2}=(q^{1/2})^{n^2}$ so that $t=q^{n^2} \in \{-1,1\}.$ Then there exists a homomorphism  of skein modules
\begin{equation*}
Fr: \mathcal{S}_t(M) \rightarrow \Sq(M)    
\end{equation*} defined on a framed link $L$ by $L \mapsto L^{[T_n]}.$
\end{theorem}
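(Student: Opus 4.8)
The plan is to verify that the linear extension of the map $L\mapsto L^{[T_n]}$ from the free $\mathbb{C}$-module on framed links in $M$ to $\Sq(M)$ kills the defining relations of $\mathcal{S}_t(M)$; since $\mathcal{S}_t(M)$ is presented by the two Kauffman bracket relations (\ref{Kauffman bracket relations}) at parameter $t$, it suffices to check that threading $T_n$ is compatible with (a) the trivial-loop relation and (b) the crossing-resolution relation, both at $t$.

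Relation (a) is immediate: threading $T_n$ along a small $0$-framed unknot $U$ disjoint from a link $L$ rescales $L^{[T_n]}$ by $T_n([U])=T_n(-(q+q^{-1}))$. Since $T_k(-x)=(-1)^kT_k(x)$ (visible from the recursion $T_k=xT_{k-1}-T_{k-2}$), this scalar equals $(-1)^n(q^n+q^{-n})$ by (\ref{Tk defining property}), and $(-1)^n q^{\pm n}=-t^{\pm1}$ by (\ref{t,q relation}); hence $(U\sqcup L)^{[T_n]}=-(t+t^{-1})\,L^{[T_n]}$, which is the $t$-loop relation.

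Relation (b) is the substance of the theorem, and it is exactly what Lemma \ref{lem:ncross} is designed for. The $t$-crossing relation compares three links $L,L_0,L_\infty$ that agree outside a ball $B$, with $B$ containing a crossing and its two smoothings; by functoriality of skein modules under embeddings \cite{Prz91} together with the fact that threading takes place in a neighborhood of the link, it is enough to verify the corresponding identity in $\Sq$ of a neighborhood of $B$. After threading $T_n$, one sees two $T_n$-cables crossing inside $B$; using the green-strand dictionary (\ref{green strands}) and Theorem \ref{Steinberg skein identities} in the form (\ref{(green strand Steinberg identity)}), this configuration is rewritten so that the crossing strands become honest $n$-strand cables emanating from $JW_{2n-1}$ projectors---precisely the left-hand side of (\ref{eqn:ncross}). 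Lemma \ref{lem:ncross} then gives that this crossing equals $t^{1/2}$ times the parallel smoothing plus $t^{-1/2}$ times the cup-cap smoothing, and undoing the rewriting via (\ref{(green strand Steinberg identity)}) produces $L^{[T_n]}=t^{1/2}L_0^{[T_n]}+t^{-1/2}L_\infty^{[T_n]}$ in $\Sq(M)$.

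The step I expect to be the main obstacle is this last reduction: organizing the local picture near $B$ so that Lemma \ref{lem:ncross} applies verbatim. One must introduce and later discard the auxiliary $JW_{n-1}$ and $JW_{2n-1}$ boxes using Theorem \ref{Steinberg skein identities}, treat uniformly the case in which the two strands at the crossing lie on the same component of $L$ (so that the $T_n$-threaded loop has to be opened up compatibly with how it closes up outside $B$) as well as the case of two distinct components, and keep track of the powers of $q^{1/2}$ produced by the crossing-absorption lemmas so that they combine to exactly $t^{\pm1/2}=(q^{1/2})^{\pm n^2}$---which is once more where (\ref{t,q relation}) enters. Beyond this bookkeeping the argument is formal.
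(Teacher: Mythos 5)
Your treatment of the loop relation is correct and coincides with the paper's computation. For the crossing relation your skeleton is also the intended one (reduce to a local picture, convert $T_n$-cables into $n$-cables hanging off $JW_{2n-1}$ projectors via (\ref{(green strand Steinberg identity)}), then invoke Lemma \ref{lem:ncross}), but the step you defer as ``introduce and later discard the auxiliary $JW_{n-1}$ and $JW_{2n-1}$ boxes'' is not bookkeeping --- it is the one genuinely nonformal point of the whole proof, and as stated it cannot be carried out. The identity (\ref{(green strand Steinberg identity)}) only converts the pair consisting of a $JW_{n-1}$ \emph{already present} next to a green strand into a $JW_{2n-1}$ with green legs; there is no skein identity that creates a $JW_{n-1}$ on $n-1$ new strands out of the empty diagram. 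So in a neighborhood of the crossing ball $B$ alone there is nothing to feed into Theorem \ref{Steinberg skein identities}, and the rewriting you describe never gets started.

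The paper's resolution is global rather than local: it reduces to a thickened surface containing the links in question, removes a point to obtain a punctured surface $\Sigma$, and multiplies both sides of the desired crossing identity by the closed $JW_{n-1}$ loop encircling the puncture. By (\ref{JW closed}) this closed loop is the threading of the nonzero polynomial $S_{n-1}$, hence by Lemma \ref{Steinberg loop} it is not a zero divisor in $\mathcal{S}_q(\Sigma)$; it therefore suffices to prove the multiplied identity (\ref{puncture Steinberg}), in which the needed $JW_{n-1}$ is now genuinely available, (\ref{(green strand Steinberg identity)}) can be applied (four times) to reach exactly the configuration of Lemma \ref{lem:ncross}, and the non-zero-divisor factor is cancelled at the end. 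This detour through the punctured surface and the zero-divisor lemma is also what lets the argument treat the two strands of the crossing uniformly whether they lie on one component or two, since everything is phrased for closed curves on $\Sigma$ rather than for open $T_n$-labelled tangles in $B$. Without this device (or some substitute, such as L\^e's transparency argument) your proposal does not close.
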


We now present the new proof.

\begin{proof}

We have to check that the map $L \mapsto L^{[T_n]}$ respects both the crossing and loop relations (\ref{Kauffman bracket relations}) which are the defining skein relations of $\mathcal{S}_t(M).$ We must show the following identities hold in $\Sq(M).$

\begin{equation} \label{Frobenius crossing}
\centerv { \def\svgscale{0.4}
\begingroup%
  \makeatletter%
  \providecommand\color[2][]{%
    \errmessage{(Inkscape) Color is used for the text in Inkscape, but the package 'color.sty' is not loaded}%
    \renewcommand\color[2][]{}%
  }%
  \providecommand\transparent[1]{%
    \errmessage{(Inkscape) Transparency is used (non-zero) for the text in Inkscape, but the package 'transparent.sty' is not loaded}%
    \renewcommand\transparent[1]{}%
  }%
  \providecommand\rotatebox[2]{#2}%
  \newcommand*\fsize{\dimexpr\f@size pt\relax}%
  \newcommand*\lineheight[1]{\fontsize{\fsize}{#1\fsize}\selectfont}%
  \ifx\svgwidth\undefined%
    \setlength{\unitlength}{115.14277421bp}%
    \ifx\svgscale\undefined%
      \relax%
    \else%
      \setlength{\unitlength}{\unitlength * \real{\svgscale}}%
    \fi%
  \else%
    \setlength{\unitlength}{\svgwidth}%
  \fi%
  \global\let\svgwidth\undefined%
  \global\let\svgscale\undefined%
  \makeatother%
  \begin{picture}(1,1.24313342)%
    \lineheight{1}%
    \setlength\tabcolsep{0pt}%
    \put(0,0){\includegraphics[width=\unitlength,page=1]{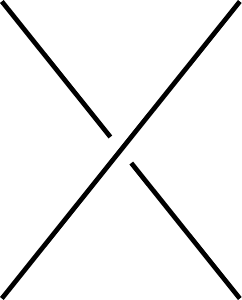}}%
    \put(0.69420953,0.42170102){\color[rgb]{0,0,0}\makebox(0,0)[lt]{\lineheight{1.25}\smash{\begin{tabular}[t]{l}$\scriptstyle T_n$\end{tabular}}}}%
    \put(0.81606742,0.91407159){\color[rgb]{0,0,0}\makebox(0,0)[lt]{\lineheight{1.25}\smash{\begin{tabular}[t]{l}$\scriptstyle T_n$\end{tabular}}}}%
  \end{picture}%
\endgroup%

}
= t^{1/2} 
\centerv { \def\svgscale{0.4}
\begingroup%
  \makeatletter%
  \providecommand\color[2][]{%
    \errmessage{(Inkscape) Color is used for the text in Inkscape, but the package 'color.sty' is not loaded}%
    \renewcommand\color[2][]{}%
  }%
  \providecommand\transparent[1]{%
    \errmessage{(Inkscape) Transparency is used (non-zero) for the text in Inkscape, but the package 'transparent.sty' is not loaded}%
    \renewcommand\transparent[1]{}%
  }%
  \providecommand\rotatebox[2]{#2}%
  \newcommand*\fsize{\dimexpr\f@size pt\relax}%
  \newcommand*\lineheight[1]{\fontsize{\fsize}{#1\fsize}\selectfont}%
  \ifx\svgwidth\undefined%
    \setlength{\unitlength}{115.53914708bp}%
    \ifx\svgscale\undefined%
      \relax%
    \else%
      \setlength{\unitlength}{\unitlength * \real{\svgscale}}%
    \fi%
  \else%
    \setlength{\unitlength}{\svgwidth}%
  \fi%
  \global\let\svgwidth\undefined%
  \global\let\svgscale\undefined%
  \makeatother%
  \begin{picture}(1,1.2323506)%
    \lineheight{1}%
    \setlength\tabcolsep{0pt}%
    \put(0.13198894,0.37083453){\color[rgb]{0,0,0}\makebox(0,0)[lt]{\lineheight{1.25}\smash{\begin{tabular}[t]{l}$\scriptstyle T_n$\end{tabular}}}}%
    \put(0.92138548,0.78500637){\color[rgb]{0,0,0}\makebox(0,0)[lt]{\lineheight{1.25}\smash{\begin{tabular}[t]{l}$\scriptstyle T_n$\end{tabular}}}}%
    \put(0,0){\includegraphics[width=\unitlength,page=1]{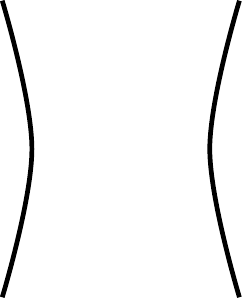}}%
  \end{picture}%
\endgroup%

}
+ t^{-1/2} 
\centerv { \def\svgscale{0.4}
\begingroup%
  \makeatletter%
  \providecommand\color[2][]{%
    \errmessage{(Inkscape) Color is used for the text in Inkscape, but the package 'color.sty' is not loaded}%
    \renewcommand\color[2][]{}%
  }%
  \providecommand\transparent[1]{%
    \errmessage{(Inkscape) Transparency is used (non-zero) for the text in Inkscape, but the package 'transparent.sty' is not loaded}%
    \renewcommand\transparent[1]{}%
  }%
  \providecommand\rotatebox[2]{#2}%
  \newcommand*\fsize{\dimexpr\f@size pt\relax}%
  \newcommand*\lineheight[1]{\fontsize{\fsize}{#1\fsize}\selectfont}%
  \ifx\svgwidth\undefined%
    \setlength{\unitlength}{115.16368716bp}%
    \ifx\svgscale\undefined%
      \relax%
    \else%
      \setlength{\unitlength}{\unitlength * \real{\svgscale}}%
    \fi%
  \else%
    \setlength{\unitlength}{\svgwidth}%
  \fi%
  \global\let\svgwidth\undefined%
  \global\let\svgscale\undefined%
  \makeatother%
  \begin{picture}(1,1.24117465)%
    \lineheight{1}%
    \setlength\tabcolsep{0pt}%
    \put(0.64534167,0.27067049){\color[rgb]{0,0,0}\makebox(0,0)[lt]{\lineheight{1.25}\smash{\begin{tabular}[t]{l}$\scriptstyle T_n$\end{tabular}}}}%
    \put(0.32535892,1.0829343){\color[rgb]{0,0,0}\makebox(0,0)[lt]{\lineheight{1.25}\smash{\begin{tabular}[t]{l}$\scriptstyle T_n$\end{tabular}}}}%
    \put(0,0){\includegraphics[width=\unitlength,page=1]{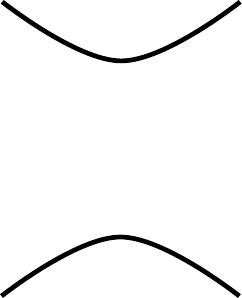}}%
  \end{picture}%
\endgroup%

}
.
\end{equation}

\begin{equation}\label{Frobenius loop}
\begin{tikzpicture}[baseline=0ex]
\draw[ultra thick] (0,0) circle [radius=.75];
\node (node) at (.75,0) [left] {$T_n$};
\end{tikzpicture}=-t-t^{-1}.
\end{equation}

We will use the standard argument that identity (\ref{Frobenius loop}) holds, but will provide a new argument that (\ref{Frobenius crossing}) holds. That (\ref{Frobenius loop}) holds follows by applying equations (\ref{Tk defining property})  and (\ref{t,q relation}) as shown in the following computation:

\begin{equation}
T_n(-q-q^{-1})\overset{(\ref{Tk defining property})}{=}(-1)^nq^n+(-1)^nq^{-n}\overset{(\ref{t,q relation})}{=}-t-t^{-1}.
\end{equation}

To show that identity (\ref{Frobenius crossing}) holds, it will suffice to show that the identity holds in a thickened surface contained in $M$. Furthermore, we may remove a point (or two points in the case of $S^2$) from the surface to obtain a punctured surface $\Sigma$, so that by Lemma \ref{Steinberg loop} and Equation (\ref{JW closed}), the element obtained by closing up $JW_{n-1}$ around a puncture is not a zero divisor in $\Sq(\Sigma).$ Thus, it will suffice to show that the following identity holds in $\Sq(\Sigma).$

\begin{equation} \label{puncture Steinberg}
\centerv { \def\svgscale{0.4}
\begingroup%
  \makeatletter%
  \providecommand\color[2][]{%
    \errmessage{(Inkscape) Color is used for the text in Inkscape, but the package 'color.sty' is not loaded}%
    \renewcommand\color[2][]{}%
  }%
  \providecommand\transparent[1]{%
    \errmessage{(Inkscape) Transparency is used (non-zero) for the text in Inkscape, but the package 'transparent.sty' is not loaded}%
    \renewcommand\transparent[1]{}%
  }%
  \providecommand\rotatebox[2]{#2}%
  \newcommand*\fsize{\dimexpr\f@size pt\relax}%
  \newcommand*\lineheight[1]{\fontsize{\fsize}{#1\fsize}\selectfont}%
  \ifx\svgwidth\undefined%
    \setlength{\unitlength}{174.91685522bp}%
    \ifx\svgscale\undefined%
      \relax%
    \else%
      \setlength{\unitlength}{\unitlength * \real{\svgscale}}%
    \fi%
  \else%
    \setlength{\unitlength}{\svgwidth}%
  \fi%
  \global\let\svgwidth\undefined%
  \global\let\svgscale\undefined%
  \makeatother%
  \begin{picture}(1,0.81831925)%
    \lineheight{1}%
    \setlength\tabcolsep{0pt}%
    \put(0,0){\includegraphics[width=\unitlength,page=1]{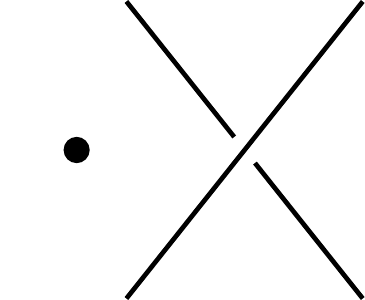}}%
    \put(0.80132244,0.27951426){\color[rgb]{0,0,0}\makebox(0,0)[lt]{\lineheight{1.25}\smash{\begin{tabular}[t]{l}$\scriptstyle T_n$\end{tabular}}}}%
    \put(0.88153807,0.60362769){\color[rgb]{0,0,0}\makebox(0,0)[lt]{\lineheight{1.25}\smash{\begin{tabular}[t]{l}$\scriptstyle T_n$\end{tabular}}}}%
    \put(0,0){\includegraphics[width=\unitlength,page=2]{3.3-Tncross-step3-lhs.pdf}}%
    \put(0.29551815,0.39103349){\color[rgb]{0,0,0}\makebox(0,0)[lt]{\lineheight{1.25}\smash{\begin{tabular}[t]{l}$\scriptstyle n-1$\end{tabular}}}}%
    \put(0,0){\includegraphics[width=\unitlength,page=3]{3.3-Tncross-step3-lhs.pdf}}%
  \end{picture}%
\endgroup%

}
= t^{1/2}
\centerv { \def\svgscale{0.4}
\begingroup%
  \makeatletter%
  \providecommand\color[2][]{%
    \errmessage{(Inkscape) Color is used for the text in Inkscape, but the package 'color.sty' is not loaded}%
    \renewcommand\color[2][]{}%
  }%
  \providecommand\transparent[1]{%
    \errmessage{(Inkscape) Transparency is used (non-zero) for the text in Inkscape, but the package 'transparent.sty' is not loaded}%
    \renewcommand\transparent[1]{}%
  }%
  \providecommand\rotatebox[2]{#2}%
  \newcommand*\fsize{\dimexpr\f@size pt\relax}%
  \newcommand*\lineheight[1]{\fontsize{\fsize}{#1\fsize}\selectfont}%
  \ifx\svgwidth\undefined%
    \setlength{\unitlength}{198.21072556bp}%
    \ifx\svgscale\undefined%
      \relax%
    \else%
      \setlength{\unitlength}{\unitlength * \real{\svgscale}}%
    \fi%
  \else%
    \setlength{\unitlength}{\svgwidth}%
  \fi%
  \global\let\svgwidth\undefined%
  \global\let\svgscale\undefined%
  \makeatother%
  \begin{picture}(1,0.71835026)%
    \lineheight{1}%
    \setlength\tabcolsep{0pt}%
    \put(0.49402712,0.21616336){\color[rgb]{0,0,0}\makebox(0,0)[lt]{\lineheight{1.25}\smash{\begin{tabular}[t]{l}$\scriptstyle T_n$\end{tabular}}}}%
    \put(0.95417479,0.45758855){\color[rgb]{0,0,0}\makebox(0,0)[lt]{\lineheight{1.25}\smash{\begin{tabular}[t]{l}$\scriptstyle T_n$\end{tabular}}}}%
    \put(0,0){\includegraphics[width=\unitlength,page=1]{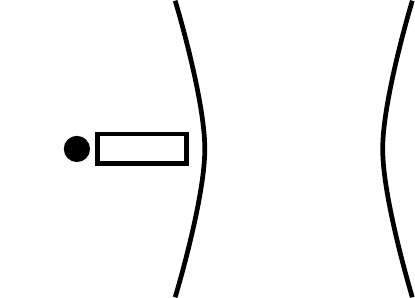}}%
    \put(0.26078868,0.34317917){\color[rgb]{0,0,0}\makebox(0,0)[lt]{\lineheight{1.25}\smash{\begin{tabular}[t]{l}$\scriptstyle n-1$\end{tabular}}}}%
    \put(0,0){\includegraphics[width=\unitlength,page=2]{3.3-Tncross-step3-vertterm.pdf}}%
  \end{picture}%
\endgroup%

}
+ t^{-1/2}
\centerv { \def\svgscale{0.4}
\begingroup%
  \makeatletter%
  \providecommand\color[2][]{%
    \errmessage{(Inkscape) Color is used for the text in Inkscape, but the package 'color.sty' is not loaded}%
    \renewcommand\color[2][]{}%
  }%
  \providecommand\transparent[1]{%
    \errmessage{(Inkscape) Transparency is used (non-zero) for the text in Inkscape, but the package 'transparent.sty' is not loaded}%
    \renewcommand\transparent[1]{}%
  }%
  \providecommand\rotatebox[2]{#2}%
  \newcommand*\fsize{\dimexpr\f@size pt\relax}%
  \newcommand*\lineheight[1]{\fontsize{\fsize}{#1\fsize}\selectfont}%
  \ifx\svgwidth\undefined%
    \setlength{\unitlength}{183.66205891bp}%
    \ifx\svgscale\undefined%
      \relax%
    \else%
      \setlength{\unitlength}{\unitlength * \real{\svgscale}}%
    \fi%
  \else%
    \setlength{\unitlength}{\svgwidth}%
  \fi%
  \global\let\svgwidth\undefined%
  \global\let\svgscale\undefined%
  \makeatother%
  \begin{picture}(1,0.77826771)%
    \lineheight{1}%
    \setlength\tabcolsep{0pt}%
    \put(0.7776146,0.16972158){\color[rgb]{0,0,0}\makebox(0,0)[lt]{\lineheight{1.25}\smash{\begin{tabular}[t]{l}$\scriptstyle T_n$\end{tabular}}}}%
    \put(0.57697216,0.67904447){\color[rgb]{0,0,0}\makebox(0,0)[lt]{\lineheight{1.25}\smash{\begin{tabular}[t]{l}$\scriptstyle T_n$\end{tabular}}}}%
    \put(0,0){\includegraphics[width=\unitlength,page=1]{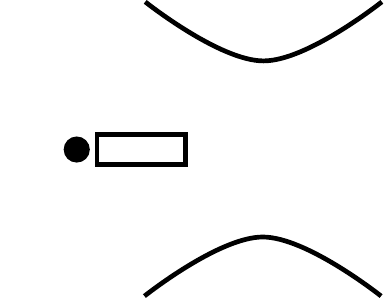}}%
    \put(0.28144691,0.3703639){\color[rgb]{0,0,0}\makebox(0,0)[lt]{\lineheight{1.25}\smash{\begin{tabular}[t]{l}$\scriptstyle n-1$\end{tabular}}}}%
    \put(0,0){\includegraphics[width=\unitlength,page=2]{3.3-Tncross-step3-cupcap.pdf}}%
  \end{picture}%
\endgroup%

}
.
\end{equation}

We begin with the left hand side of (\ref{puncture Steinberg}). After applying isotopies and the identities (\ref{(green strand Steinberg identity)}) from Theorem \ref{Steinberg skein identities} four times in the following way we obtain
\begin{equation}\label{puncture Steinberg crossing}
\centerv { \def\svgscale{0.5}
\begingroup%
  \makeatletter%
  \providecommand\color[2][]{%
    \errmessage{(Inkscape) Color is used for the text in Inkscape, but the package 'color.sty' is not loaded}%
    \renewcommand\color[2][]{}%
  }%
  \providecommand\transparent[1]{%
    \errmessage{(Inkscape) Transparency is used (non-zero) for the text in Inkscape, but the package 'transparent.sty' is not loaded}%
    \renewcommand\transparent[1]{}%
  }%
  \providecommand\rotatebox[2]{#2}%
  \newcommand*\fsize{\dimexpr\f@size pt\relax}%
  \newcommand*\lineheight[1]{\fontsize{\fsize}{#1\fsize}\selectfont}%
  \ifx\svgwidth\undefined%
    \setlength{\unitlength}{288.58614223bp}%
    \ifx\svgscale\undefined%
      \relax%
    \else%
      \setlength{\unitlength}{\unitlength * \real{\svgscale}}%
    \fi%
  \else%
    \setlength{\unitlength}{\svgwidth}%
  \fi%
  \global\let\svgwidth\undefined%
  \global\let\svgscale\undefined%
  \makeatother%
  \begin{picture}(1,0.98626411)%
    \lineheight{1}%
    \setlength\tabcolsep{0pt}%
    \put(0,0){\includegraphics[width=\unitlength,page=1]{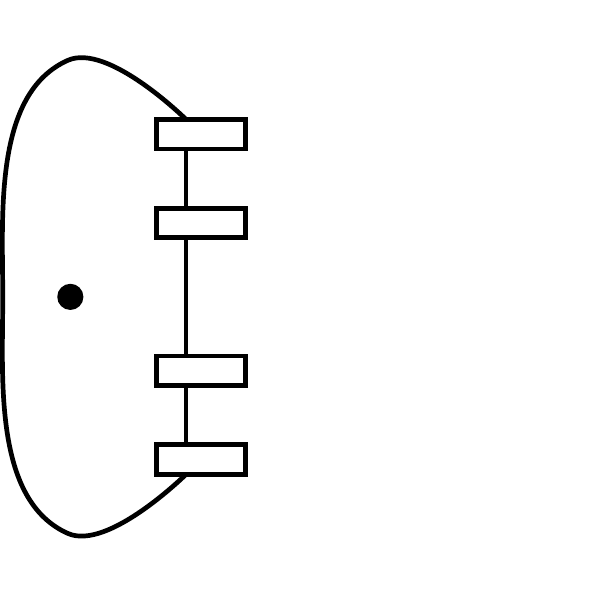}}%
    \put(0.26820472,0.74735373){\color[rgb]{0,0,0}\makebox(0,0)[lt]{\lineheight{1.25}\smash{\begin{tabular}[t]{l}$\scriptstyle n-1$\end{tabular}}}}%
    \put(0.26820472,0.20742367){\color[rgb]{0,0,0}\makebox(0,0)[lt]{\lineheight{1.25}\smash{\begin{tabular}[t]{l}$\scriptstyle n-1$\end{tabular}}}}%
    \put(0.26979011,0.35561659){\color[rgb]{0,0,0}\makebox(0,0)[lt]{\lineheight{1.25}\smash{\begin{tabular}[t]{l}$\scriptstyle n-1$\end{tabular}}}}%
    \put(0.26979011,0.60117981){\color[rgb]{0,0,0}\makebox(0,0)[lt]{\lineheight{1.25}\smash{\begin{tabular}[t]{l}$\scriptstyle n-1$\end{tabular}}}}%
    \put(0,0){\includegraphics[width=\unitlength,page=2]{3.3-Tncross-step4.0.pdf}}%
    \put(0.04503596,0.37367629){\color[rgb]{0,0,0}\rotatebox{90}{\makebox(0,0)[lt]{\lineheight{1.25}\smash{\begin{tabular}[t]{l}$\scriptstyle n-1$\end{tabular}}}}}%
  \end{picture}%
\endgroup%

}
=
\centerv { \def\svgscale{0.5}
\begingroup%
  \makeatletter%
  \providecommand\color[2][]{%
    \errmessage{(Inkscape) Color is used for the text in Inkscape, but the package 'color.sty' is not loaded}%
    \renewcommand\color[2][]{}%
  }%
  \providecommand\transparent[1]{%
    \errmessage{(Inkscape) Transparency is used (non-zero) for the text in Inkscape, but the package 'transparent.sty' is not loaded}%
    \renewcommand\transparent[1]{}%
  }%
  \providecommand\rotatebox[2]{#2}%
  \newcommand*\fsize{\dimexpr\f@size pt\relax}%
  \newcommand*\lineheight[1]{\fontsize{\fsize}{#1\fsize}\selectfont}%
  \ifx\svgwidth\undefined%
    \setlength{\unitlength}{237.00639139bp}%
    \ifx\svgscale\undefined%
      \relax%
    \else%
      \setlength{\unitlength}{\unitlength * \real{\svgscale}}%
    \fi%
  \else%
    \setlength{\unitlength}{\svgwidth}%
  \fi%
  \global\let\svgwidth\undefined%
  \global\let\svgscale\undefined%
  \makeatother%
  \begin{picture}(1,1.20090497)%
    \lineheight{1}%
    \setlength\tabcolsep{0pt}%
    \put(0,0){\includegraphics[width=\unitlength,page=1]{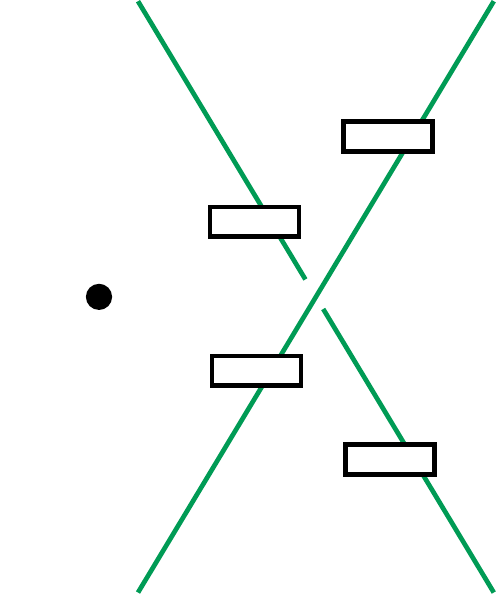}}%
    \put(0.70220785,0.90525384){\color[rgb]{0,0,0}\makebox(0,0)[lt]{\lineheight{1.25}\smash{\begin{tabular}[t]{l}$\scriptstyle 2n-1$\end{tabular}}}}%
    \put(0.70695426,0.25256523){\color[rgb]{0,0,0}\makebox(0,0)[lt]{\lineheight{1.25}\smash{\begin{tabular}[t]{l}$\scriptstyle 2n-1$\end{tabular}}}}%
    \put(0.43978005,0.4330095){\color[rgb]{0,0,0}\makebox(0,0)[lt]{\lineheight{1.25}\smash{\begin{tabular}[t]{l}$\scriptstyle 2n-1$\end{tabular}}}}%
    \put(0.4357098,0.73445689){\color[rgb]{0,0,0}\makebox(0,0)[lt]{\lineheight{1.25}\smash{\begin{tabular}[t]{l}$\scriptstyle 2n-1$\end{tabular}}}}%
    \put(0.07499377,0.54065137){\color[rgb]{0,0,0}\rotatebox{90}{\makebox(0,0)[lt]{\lineheight{1.25}\smash{\begin{tabular}[t]{l}$\scriptstyle n-1$\end{tabular}}}}}%
    \put(0,0){\includegraphics[width=\unitlength,page=2]{3.3-Tncross-step4.4.pdf}}%
    \put(0.43521723,0.51480058){\color[rgb]{0,0,0}\rotatebox{90}{\makebox(0,0)[lt]{\lineheight{1.25}\smash{\begin{tabular}[t]{l}$\scriptstyle n-1$\end{tabular}}}}}%
    \put(0,0){\includegraphics[width=\unitlength,page=3]{3.3-Tncross-step4.4.pdf}}%
    \put(0.55582761,0.85367188){\color[rgb]{0,0,0}\makebox(0,0)[lt]{\lineheight{1.25}\smash{\begin{tabular}[t]{l}$\scriptscriptstyle n-1$\end{tabular}}}}%
    \put(0,0){\includegraphics[width=\unitlength,page=4]{3.3-Tncross-step4.4.pdf}}%
    \put(0.60124286,0.37320846){\color[rgb]{0,0,0}\makebox(0,0)[lt]{\lineheight{1.25}\smash{\begin{tabular}[t]{l}$\scriptscriptstyle n-1$\end{tabular}}}}%
  \end{picture}%
\endgroup%

}.
\end{equation}

On the other hand, we apply the same moves on the right side of (\ref{puncture Steinberg}) to obtain

\begin{equation}\label{puncture steinberg cupcap}
t^{1/2}
\centerv { \def\svgscale{0.5}
\begingroup%
  \makeatletter%
  \providecommand\color[2][]{%
    \errmessage{(Inkscape) Color is used for the text in Inkscape, but the package 'color.sty' is not loaded}%
    \renewcommand\color[2][]{}%
  }%
  \providecommand\transparent[1]{%
    \errmessage{(Inkscape) Transparency is used (non-zero) for the text in Inkscape, but the package 'transparent.sty' is not loaded}%
    \renewcommand\transparent[1]{}%
  }%
  \providecommand\rotatebox[2]{#2}%
  \newcommand*\fsize{\dimexpr\f@size pt\relax}%
  \newcommand*\lineheight[1]{\fontsize{\fsize}{#1\fsize}\selectfont}%
  \ifx\svgwidth\undefined%
    \setlength{\unitlength}{237.00639139bp}%
    \ifx\svgscale\undefined%
      \relax%
    \else%
      \setlength{\unitlength}{\unitlength * \real{\svgscale}}%
    \fi%
  \else%
    \setlength{\unitlength}{\svgwidth}%
  \fi%
  \global\let\svgwidth\undefined%
  \global\let\svgscale\undefined%
  \makeatother%
  \begin{picture}(1,1.20090497)%
    \lineheight{1}%
    \setlength\tabcolsep{0pt}%
    \put(0,0){\includegraphics[width=\unitlength,page=1]{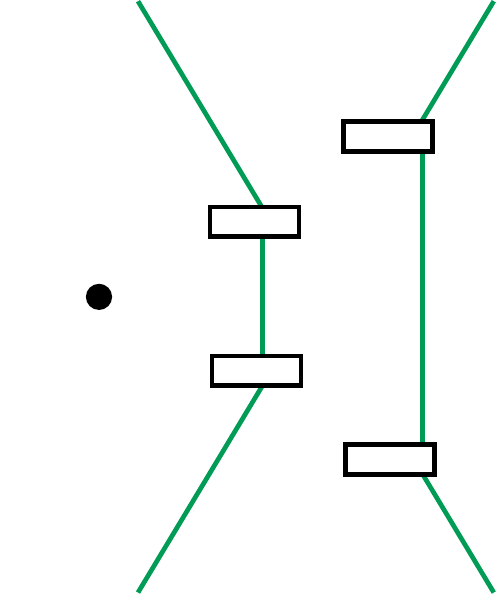}}%
    \put(0.70220787,0.90525385){\color[rgb]{0,0,0}\makebox(0,0)[lt]{\lineheight{1.25}\smash{\begin{tabular}[t]{l}$\scriptstyle 2n-1$\end{tabular}}}}%
    \put(0.70695429,0.25256543){\color[rgb]{0,0,0}\makebox(0,0)[lt]{\lineheight{1.25}\smash{\begin{tabular}[t]{l}$\scriptstyle 2n-1$\end{tabular}}}}%
    \put(0.43978008,0.43300951){\color[rgb]{0,0,0}\makebox(0,0)[lt]{\lineheight{1.25}\smash{\begin{tabular}[t]{l}$\scriptstyle 2n-1$\end{tabular}}}}%
    \put(0.43570982,0.7344569){\color[rgb]{0,0,0}\makebox(0,0)[lt]{\lineheight{1.25}\smash{\begin{tabular}[t]{l}$\scriptstyle 2n-1$\end{tabular}}}}%
    \put(0.07499379,0.54065138){\color[rgb]{0,0,0}\rotatebox{90}{\makebox(0,0)[lt]{\lineheight{1.25}\smash{\begin{tabular}[t]{l}$\scriptstyle n-1$\end{tabular}}}}}%
    \put(0,0){\includegraphics[width=\unitlength,page=2]{3.3-Tncross-step5-vert.pdf}}%
    \put(0.43521726,0.51480059){\color[rgb]{0,0,0}\rotatebox{90}{\makebox(0,0)[lt]{\lineheight{1.25}\smash{\begin{tabular}[t]{l}$\scriptstyle n-1$\end{tabular}}}}}%
    \put(0,0){\includegraphics[width=\unitlength,page=3]{3.3-Tncross-step5-vert.pdf}}%
    \put(0.55680291,0.85381123){\color[rgb]{0,0,0}\makebox(0,0)[lt]{\lineheight{1.25}\smash{\begin{tabular}[t]{l}$\scriptscriptstyle n-1$\end{tabular}}}}%
    \put(0,0){\includegraphics[width=\unitlength,page=4]{3.3-Tncross-step5-vert.pdf}}%
    \put(0.60124288,0.37320847){\color[rgb]{0,0,0}\makebox(0,0)[lt]{\lineheight{1.25}\smash{\begin{tabular}[t]{l}$\scriptscriptstyle n-1$\end{tabular}}}}%
  \end{picture}%
\endgroup%

}
+ t^{-1/2}
\centerv { \def\svgscale{0.5}
\begingroup%
  \makeatletter%
  \providecommand\color[2][]{%
    \errmessage{(Inkscape) Color is used for the text in Inkscape, but the package 'color.sty' is not loaded}%
    \renewcommand\color[2][]{}%
  }%
  \providecommand\transparent[1]{%
    \errmessage{(Inkscape) Transparency is used (non-zero) for the text in Inkscape, but the package 'transparent.sty' is not loaded}%
    \renewcommand\transparent[1]{}%
  }%
  \providecommand\rotatebox[2]{#2}%
  \newcommand*\fsize{\dimexpr\f@size pt\relax}%
  \newcommand*\lineheight[1]{\fontsize{\fsize}{#1\fsize}\selectfont}%
  \ifx\svgwidth\undefined%
    \setlength{\unitlength}{237.00639139bp}%
    \ifx\svgscale\undefined%
      \relax%
    \else%
      \setlength{\unitlength}{\unitlength * \real{\svgscale}}%
    \fi%
  \else%
    \setlength{\unitlength}{\svgwidth}%
  \fi%
  \global\let\svgwidth\undefined%
  \global\let\svgscale\undefined%
  \makeatother%
  \begin{picture}(1,1.20090497)%
    \lineheight{1}%
    \setlength\tabcolsep{0pt}%
    \put(0,0){\includegraphics[width=\unitlength,page=1]{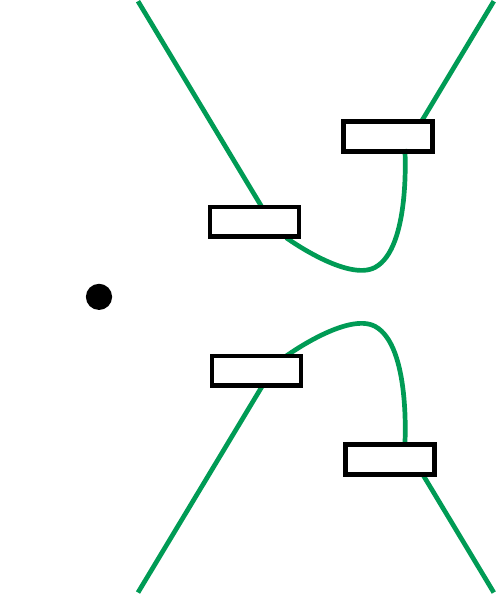}}%
    \put(0.70220769,0.90525385){\color[rgb]{0,0,0}\makebox(0,0)[lt]{\lineheight{1.25}\smash{\begin{tabular}[t]{l}$\scriptstyle 2n-1$\end{tabular}}}}%
    \put(0.7069541,0.25256534){\color[rgb]{0,0,0}\makebox(0,0)[lt]{\lineheight{1.25}\smash{\begin{tabular}[t]{l}$\scriptstyle 2n-1$\end{tabular}}}}%
    \put(0.43977999,0.43300951){\color[rgb]{0,0,0}\makebox(0,0)[lt]{\lineheight{1.25}\smash{\begin{tabular}[t]{l}$\scriptstyle 2n-1$\end{tabular}}}}%
    \put(0.43570973,0.7344569){\color[rgb]{0,0,0}\makebox(0,0)[lt]{\lineheight{1.25}\smash{\begin{tabular}[t]{l}$\scriptstyle 2n-1$\end{tabular}}}}%
    \put(0.07499379,0.54065138){\color[rgb]{0,0,0}\rotatebox{90}{\makebox(0,0)[lt]{\lineheight{1.25}\smash{\begin{tabular}[t]{l}$\scriptstyle n-1$\end{tabular}}}}}%
    \put(0,0){\includegraphics[width=\unitlength,page=2]{3.3-Tncross-step5.0-cupcap.pdf}}%
    \put(0.43521717,0.51480059){\color[rgb]{0,0,0}\rotatebox{90}{\makebox(0,0)[lt]{\lineheight{1.25}\smash{\begin{tabular}[t]{l}$\scriptstyle n-1$\end{tabular}}}}}%
    \put(0,0){\includegraphics[width=\unitlength,page=3]{3.3-Tncross-step5.0-cupcap.pdf}}%
    \put(0.55154258,0.85161678){\color[rgb]{0,0,0}\makebox(0,0)[lt]{\lineheight{1.25}\smash{\begin{tabular}[t]{l}$\scriptscriptstyle n-1$\end{tabular}}}}%
    \put(0,0){\includegraphics[width=\unitlength,page=4]{3.3-Tncross-step5.0-cupcap.pdf}}%
    \put(0.61891921,0.3757337){\color[rgb]{0,0,0}\makebox(0,0)[lt]{\lineheight{1.25}\smash{\begin{tabular}[t]{l}$\scriptscriptstyle n-1$\end{tabular}}}}%
  \end{picture}%
\endgroup%

}.
\end{equation}

We see the that (\ref{puncture Steinberg crossing}) and (\ref{puncture steinberg cupcap}) only differ in the middle regions of the diagrams, where we apply Lemma \ref{lem:ncross} to see that the desired identity (\ref{puncture Steinberg}) holds.

\end{proof}

\section{Thick Jones-Wenzl elements and $JW_{\hat{k}}$}\label{thick JW}

We begin by introducing local notation for elements $\widetilde{JW_{k}}$ which we call thick Jones-Wenzl elements in the skein module. They are meant to be thought of as elements obtained by applying the Frobenius to a neighborhood containing the projector $JW_k.$ Although $Fr(V_k^{(t)})$ is not a tilting module and thus does not exist as an object in the idempotent completion of $\text{TL},$ our elements exist in skein modules. We then use the thick Jones-Wenzl elements $\widetilde{JW_{k}}$ to give new versions of formulas for the elements $JW_{n-1+kn}$ that have appeared in \cite{GW93,MS22,STWZ23}.

Recall the notation from Equation (\ref{green strands}) involving green strands.

\begin{notation}
We will use the following notation to depict the result of starting with the element $JW_k$ at parameter $t$ and then replacing every strand with a green strand. Similar to the previous notation in (\ref{green strands}) regarding green strands, we assume that the green strands leaving $\widetilde{JW_k}$ end at either a standard JW projector or at a thick Jones-Wenzl element. The $\widetilde{JW_k}$ elements satisfy analogous properties, involving green strands, as ordinary $JW$ projectors. In particular they satisfy analogues of the properties of axioms \ref{axiom 1}, \ref{axiom 2} and the absorption property (\ref{absorption}) and equation (\ref{JW trace}). The following depicts $\widetilde{JW_3}.$

\begin{equation*}
\widetilde{JW_3}=
\begin{tikzpicture}[baseline=6ex]
\draw [ultra thick, color=ForestGreen] (1.4,0)--(1.4,2);
\draw [ultra thick, color=ForestGreen] (1.1,0)--(1.1,2);
\draw [ultra thick, color=ForestGreen] (1.7,0)--(1.7,2);
\node [draw=ForestGreen, ellipse, minimum width=.75cm, ultra thick, fill=white] (node) at (1.4,1) {$3$};
\end{tikzpicture}
\end{equation*}
\end{notation}

\begin{example}
The following examples give a sample of the results of connecting up the strands of $\widetilde{JW}_2$ in two different ways.
\begin{equation*}
\begin{tikzpicture}[baseline=6ex]
\draw [ultra thick, color=ForestGreen] (.3,.25)--(.3,1.75);
\draw [ultra thick, color=ForestGreen] (-.3,.25)--(-.3,1.75);
\node [draw, minimum width=.5cm, ultra thick, fill = white] (node) at (.3,1.75) {};
\node [draw, minimum width=.5cm, ultra thick, fill = white] (node) at (.3,.25) {};
\node [draw, minimum width=.5cm, ultra thick, fill = white] (node) at (-.3,1.75) {};
\node [draw, minimum width=.5cm, ultra thick, fill = white] (node) at (-.3,.25) {};
\node [draw=ForestGreen, ellipse, minimum width=.75cm, ultra thick, fill=white] (node) at (0,1) {$2$};
\end{tikzpicture}=
\begin{tikzpicture}[baseline=6ex]
\draw [ultra thick] (.3,.25)--(.3,1.75);
\draw [ultra thick] (-.3,.25)--(-.3,1.75);
\node [draw, minimum width=.5cm, ultra thick, fill = white] (node) at (.3,1.75) {};
\node [draw, minimum width=.5cm, ultra thick, fill = white] (node) at (.3,.25) {};
\node [draw, minimum width=.5cm, ultra thick, fill = white] (node) at (-.3,1.75) {};
\node [draw, minimum width=.5cm, ultra thick, fill = white] (node) at (-.3,.25) {};
\node (node) at (-.3,1) [right] {$n$};
\node (node) at (.3,1) [right] {$n$};
\end{tikzpicture}+ \frac{1}{[2]_t}
\begin{tikzpicture}[baseline=6ex]
\draw [ultra thick] (-.3,1.75)--(-.3,1.5);
\draw [ultra thick] (-.3,1.5) arc (180:360:.3);
\draw [ultra thick] (.3,1.5)--(.3,1.75);
\draw [ultra thick] (-.3,.25)--(-.3,.5);
\draw [ultra thick] (-.3,.5) arc (180:0:.3);
\draw [ultra thick] (.3,.5)--(.3,.25);
\node [draw, minimum width=.5cm, ultra thick, fill = white] (node) at (.3,1.75) {};
\node [draw, minimum width=.5cm, ultra thick, fill = white] (node) at (.3,.25) {};
\node [draw, minimum width=.5cm, ultra thick, fill = white] (node) at (-.3,1.75) {};
\node [draw, minimum width=.5cm, ultra thick, fill = white] (node) at (-.3,.25) {};
\node (node) at (.3,1.4) [right] {$n$};
\node (node) at (.3,.6) [right] {$n$};
\end{tikzpicture}.
\end{equation*}

\begin{equation*}
\begin{tikzpicture}[baseline=6ex]
\draw [ultra thick, color=ForestGreen] (.3,.25)--(.3,1.75);
\draw [ultra thick, color=ForestGreen] (-.3,.25)--(-.3,1.75);
\draw [ultra thick, color=ForestGreen] (.3,1.75) arc (180:0:.25);
\draw [ultra thick, color=ForestGreen] (.8,1.75)--(.8,.25);
\draw [ultra thick, color=ForestGreen] (.3,.25) arc (180:360:.25);
\node [draw, minimum width=.5cm, ultra thick, fill = white] (node) at (-.3,1.75) {};
\node [draw, minimum width=.5cm, ultra thick, fill = white] (node) at (-.3,.25) {};
\node [draw=ForestGreen, ellipse, minimum width=.75cm, ultra thick, fill=white] (node) at (0,1) {$2$};
\end{tikzpicture}=
\begin{tikzpicture}[baseline=6ex]
\draw [ultra thick] (.3,.25)--(.3,1.75);
\draw [ultra thick] (-.3,.25)--(-.3,1.75);
\draw [ultra thick] (.3,1.75) arc (180:0:.25);
\draw [ultra thick] (.8,1.75)--(.8,.25);
\draw [ultra thick] (.3,.25) arc (180:360:.25);
\node [draw, minimum width=.5cm, ultra thick, fill = white] (node) at (-.3,1.75) {};
\node [draw, minimum width=.5cm, ultra thick, fill = white] (node) at (-.3,.25) {};
\node (node) at (-.3,1) [right] {$n$};
\node (node) at (.8,1.75) [right] {$T_n$};
\end{tikzpicture}+\frac{1}{[2]_t}
\begin{tikzpicture}[baseline=6ex]
\draw [ultra thick] (-.3,1.75)--(-.3,1.5);
\draw [ultra thick] (-.3,1.5) arc (180:360:.3);
\draw [ultra thick] (.3,1.75)--(.3,1.5);
\draw [ultra thick] (-.3,.25)--(-.3,.5);
\draw [ultra thick] (-.3,.5) arc (180:0:.3);
\draw [ultra thick] (.3,.25)--(.3,.5);
\draw [ultra thick] (.3,1.75) arc (180:0:.25);
\draw [ultra thick] (.8,1.75)--(.8,.25);
\draw [ultra thick] (.3,.25) arc (180:360:.25);
\node [draw, minimum width=.5cm, ultra thick, fill = white] (node) at (-.3,1.75) {};
\node [draw, minimum width=.5cm, ultra thick, fill = white] (node) at (-.3,.25) {};
\node (node) at (-.3,1) [right] {$n$};
\end{tikzpicture}.
\end{equation*}    
\end{example}

\begin{theorem}
Given $k \geq 1,$ let $\widehat{k}=n-1+kn.$ When $q$ is a root of unity and $n$ is the smallest positive integer such that $q^n \in \{-1,1\},$ the Jones-Wenzl projector $JW_{\widehat{k}}$ is given by

\begin{equation}\label{JWhat}
\begin{tikzpicture}[baseline=6ex]
\draw [ultra thick] (0,0)--(0,2);
\node [draw, minimum width=.75cm, ultra thick, fill = white] (node) at (0,1) {$\widehat{k}$};
\end{tikzpicture}=
\begin{tikzpicture}[baseline=9ex]
\draw[ultra thick] (-.25,-.5)--(-.25,3.5);
\draw[ultra thick, color=ForestGreen] (-.25,1)--(1.4,1.5);
\draw[ultra thick, color=ForestGreen] (-.25,.75)--(1.4,1.5);
\draw[ultra thick, color=ForestGreen] (-.25,2.25)--(1.4,1.5);
\draw[ultra thick, color=ForestGreen] (-.25,2)--(1.4,1.5);
\draw[ultra thick] (.25,2.5)--(1.4,3);
\draw[ultra thick] (.25,.5)--(1.4,0);
\draw[ultra thick, color=ForestGreen] (1.4,0)--(1.4,3);
\draw[ultra thick] (1.4,3)--(1.4,3.5);
\draw[ultra thick] (1.4,0)--(1.4,-.5);
\node[draw, minimum width=.75cm, ultra thick, fill = white] (node) at (0,.75) {$\widehat{k-1}$};
\node[draw, minimum width=.75cm, ultra thick, fill = white] (node) at (0,2.25) {$\widehat{k-1}$};
\node [draw=ForestGreen, ellipse, minimum width=.75cm, ultra thick, fill=white] (node) at (1.4,1.5) {$k$};
\node[draw, minimum width=.75cm, ultra thick, fill = white] (node) at (1.4,3) {$2n-1$};
\node[draw, minimum width=.75cm, ultra thick, fill = white] (node) at (1.4,0) {$2n-1$};
\end{tikzpicture}.
\end{equation}
\end{theorem}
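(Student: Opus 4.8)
The plan is to show that the right-hand side of (\ref{JWhat}), which I will call $\Phi_k$, satisfies the two defining axioms \ref{axiom 1} and \ref{axiom 2} of the Jones-Wenzl projector on $\widehat{k}$ strands; since such a projector is unique whenever it exists, this simultaneously proves existence and the stated formula. I would argue by induction on $k \geq 1$. The only fact needed about the leftmost boxes is that $JW_{\widehat{k-1}}$ exists together with its universal properties (the absorption property (\ref{absorption}) and uncappability (\ref{uncappable})); for $k=1$ this is just the existence of $JW_{n-1}$, which holds because $[j]\neq 0$ for $1\le j<n$, and for $k\ge 2$ it is the inductive hypothesis. Note that the recursion (\ref{JW recursion}) is unavailable here, since $[\widehat k]!=0$ and indeed $JW_{\widehat k -1}$ does not exist, so the axiom-checking route is essentially forced. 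Throughout I would freely use that $JW_{2n-1}$ exists in the explicit form (\ref{JW_2n-1}), that the thick element $\widetilde{JW_k}$ exists and obeys the stated green analogues of axioms \ref{axiom 1}, \ref{axiom 2} and of (\ref{absorption}), and the already-established Steinberg identities of Theorem \ref{Steinberg skein identities} in the green form (\ref{(green strand Steinberg identity)}), which is precisely the device that lets one slide a $JW_{n-1}$ past a group of Frobenius (green) strands and repackage the local picture in terms of $JW_{2n-1}$.

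Axiom \ref{axiom 1}, that the coefficient of $\mathrm{Id}_{\widehat k}$ equals $1$, is the routine part: expand each of the three kinds of boxes occurring in $\Phi_k$ in its planar-matching basis. Every box contributes its identity term with coefficient $1$, and each non-identity term contains a turnback. Selecting the identity term from every box leaves the identity $\widehat k$-tangle — one checks that the through-strands in the diagram of (\ref{JWhat}) are routed by the identity permutation — while any other choice yields a diagram that is either zero or distinct from $\mathrm{Id}_{\widehat k}$. Hence the coefficient of $\mathrm{Id}_{\widehat k}$ in $\Phi_k$ is $1$, consistent with the Steinberg tensor product formula (\ref{Tensor product formulae}) which predicts $\widehat{k}=n-1+kn$.

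The substance of the argument is axiom \ref{axiom 2}: capping any two adjacent strands along the top boundary of $\Phi_k$ (the bottom boundary being handled by vertical symmetry) must produce zero. I would organize this by cases according to which constituent boxes the two capped strands feed into. If both capped strands enter the top $JW_{\widehat{k-1}}$ block, the cap dies by uncappability of $JW_{\widehat{k-1}}$ (inductive hypothesis, or existence of $JW_{n-1}$ when $k=1$); if both enter the top $JW_{2n-1}$ block, it dies by uncappability of (\ref{JW_2n-1}); if both enter $\widetilde{JW_k}$, it dies by the green analogue of uncappability for $\widetilde{JW_k}$. The genuinely delicate cases are the ``mixed'' ones, in which the cap straddles the interface between the Steinberg group of $n-1$ strands and an adjacent group of Frobenius strands. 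In such a case I would apply the green Steinberg identity (\ref{(green strand Steinberg identity)}) to slide the relevant $JW_{n-1}$ across the green strand, which converts the local neighbourhood into one controlled by $JW_{2n-1}$; the cap is then absorbed into that projector and killed, with any crossings of green strands that are produced en route disposed of by Lemma \ref{lem:ncross}. The main obstacle is precisely the bookkeeping of these mixed caps: there are a handful of subcases depending on whether the neighbouring green strand belongs to the $\widetilde{JW_k}$ block or to a $JW_{2n-1}$ block, and one must verify that in each the Steinberg identity applies with the correct local orientation and that no spurious nonzero terms survive. Once all caps are shown to vanish, uniqueness of Jones-Wenzl projectors forces $\Phi_k = JW_{\widehat k}$, which closes the induction.
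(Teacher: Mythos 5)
Your overall framework --- verifying axioms \ref{axiom 1} and \ref{axiom 2} for the right-hand side and inducting on $k$, with the coefficient-of-identity check done essentially as you describe --- is the same as the paper's, and your reduction of uncappability to the boundary positions not internal to a single constituent box is correct. But note that at the top boundary of (\ref{JWhat}) there is exactly \emph{one} such position: the $(k-1)n$ strands emanating from the top $JW_{\widehat{k-1}}$ and the $2n-1$ strands emanating from the top $JW_{2n-1}$ are each handled by the uncappability of their own box (no boundary strand comes directly from $\widetilde{JW_k}$, and no green strand reaches the boundary), so the only cap left to check joins the last strand of the first group to the first strand of the second --- both black. Your proposed mechanism for killing this cap --- slide $JW_{n-1}$ across the green strands via (\ref{(green strand Steinberg identity)}), then ``absorb the cap into $JW_{2n-1}$ and kill it,'' cleaning up crossings with Lemma \ref{lem:ncross} --- is where the gap lies. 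A cap is annihilated only when both of its feet land on the same projector, and here they do not; no single application of uncappability or absorption makes this term vanish, and no crossings arise in this computation that would call for Lemma \ref{lem:ncross}.

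What actually happens is a cancellation between two nonzero terms. The paper expands the capped top $JW_{2n-1}$ by (\ref{JW_2n-1}); because every green strand is $n$ parallel strands ending on a projector, only the $A_{n-1}'$ term survives, with coefficient $(-1)^{n-1}$. One then applies the green analogue of the recursion (\ref{JW recursion}) to $\widetilde{JW_k}$ and absorbs the resulting $\widetilde{JW_{k-1}}$ elements into the $JW_{\widehat{k-1}}$ boxes, leaving two diagrams with coefficients $1$ and $[k-1]_t/[k]_t$. These cancel only because of the auxiliary partial-trace identity (\ref{trace green strand}), which evaluates the closure of a green strand around $JW_{\widehat{k-1}}$ to $-[k]_t/[k-1]_t$ times $JW_{\widehat{k-2}}$, and is itself proved from the inductive hypothesis together with (\ref{(green strand Steinberg identity)}) and (\ref{JW trace}). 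This identity and the resulting two-term cancellation are the real content of the uncappability check; your proposal contains neither, so the key step does not go through as written.
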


\begin{proof}
We proceed by induction on $k,$ where the base case $k=1$ corresponds to $JW_{2n-1}$. We now assume that the formula for $JW_{\widehat{k-1}}$ satisfies the axioms \ref{axiom 1} and \ref{axiom 2}. First we check that the coefficient of $\text{Id}_{\widehat{k}}$ is $1$ in the expression of $JW_{\widehat{k}}.$ We do this by ignoring any term with a cap, beginning with the bottom $JW_{2n-1}$ and working our way up the diagram and using the fact that the coefficient of the identity term is $1$ for each element appearing in the diagram.

Before checking that $JW_{\widehat{k}}$ is uncappable we observe the following identity:

\begin{equation}\label{trace green strand}
\begin{tikzpicture}[baseline=6ex]
\draw [ultra thick] (-.25,0)--(-.25,2);
\draw [ultra thick, color=ForestGreen] (.25,1)--(.25,1.75);
\draw [ultra thick, color=ForestGreen] (.25,1)--(.25,.25);
\draw [ultra thick,color=ForestGreen] (.25,1.75) arc (180:0:.25);
\draw [ultra thick,color=ForestGreen] (.75,1.75)--(.75,.25);
\draw [ultra thick, color=ForestGreen] (.25,.25) arc (180:360:.25);
\node [draw, minimum width=.75cm, ultra thick, fill = white] (node) at (0,1) {$\widehat{k-1}$};
\end{tikzpicture}\overset{(\ref{JWhat})}{=}
\begin{tikzpicture}[baseline=9ex]
\draw[ultra thick] (-.25,-.5)--(-.25,3.5);
\draw[ultra thick, color=ForestGreen] (-.25,1)--(1.4,1.5);
\draw[ultra thick, color=ForestGreen] (-.25,.75)--(1.4,1.5);
\draw[ultra thick, color=ForestGreen] (-.25,2.25)--(1.4,1.5);
\draw[ultra thick, color=ForestGreen] (-.25,2)--(1.4,1.5);
\draw[ultra thick] (.25,2.5)--(1.4,3);
\draw[ultra thick] (.25,.5)--(1.4,0);
\draw[ultra thick, color=ForestGreen] (1.4,0)--(1.4,3);
\draw[ultra thick] (1.2,3)--(1.2,3.5);
\draw[ultra thick] (1.2,0)--(1.2,-.5);
\draw [ultra thick,color=ForestGreen] (1.8,3.25) arc (180:0:.25);
\draw [ultra thick, color=ForestGreen] (2.3,3.25)--(2.3,-.25);
\draw [ultra thick, color=ForestGreen] (1.8,-.25) arc (180:360:.25);
\node[draw, minimum width=.75cm, ultra thick, fill = white] (node) at (0,.75) {$\widehat{k-2}$};
\node[draw, minimum width=.75cm, ultra thick, fill = white] (node) at (0,2.25) {$\widehat{k-2}$};
\node [draw=ForestGreen, ellipse, minimum width=.75cm, ultra thick, fill=white] (node) at (1.4,1.5) {$k-1$};
\node[draw, minimum width=.75cm, ultra thick, fill = white] (node) at (1.4,3) {$2n-1$};
\node[draw, minimum width=.75cm, ultra thick, fill = white] (node) at (1.4,0) {$2n-1$};
\end{tikzpicture}\overset{(\ref{(green strand Steinberg identity)})}{=}
\begin{tikzpicture}[baseline=9ex]
\draw[ultra thick] (-.25,-.5)--(-.25,3.5);
\draw[ultra thick, color=ForestGreen] (-.25,1)--(1.4,1.5);
\draw[ultra thick, color=ForestGreen] (-.25,.75)--(1.4,1.5);
\draw[ultra thick, color=ForestGreen] (-.25,2.25)--(1.4,1.5);
\draw[ultra thick, color=ForestGreen] (-.25,2)--(1.4,1.5);
\draw [ultra thick,color=ForestGreen] (1.8,1.75) arc (180:0:.25);
\draw [ultra thick, color=ForestGreen] (2.3,1.75)--(2.3,1.25);
\draw [ultra thick, color=ForestGreen] (1.8,1.25) arc (180:360:.25);
\node[draw, minimum width=.75cm, ultra thick, fill = white] (node) at (0,.75) {$\widehat{k-2}$};
\node[draw, minimum width=.75cm, ultra thick, fill = white] (node) at (0,2.25) {$\widehat{k-2}$};
\node [draw=ForestGreen, ellipse, minimum width=.75cm, ultra thick, fill=white] (node) at (1.4,1.5) {$k-1$};
\end{tikzpicture}\overset{(\ref{JW trace})}{=}\frac{-[k]_t}{[k-1]_t}
\begin{tikzpicture}[baseline=6ex]
\draw [ultra thick] (0,0)--(0,2);
\node [draw, minimum width=.75cm, ultra thick, fill = white] (node) at (0,1) {$\widehat{k-2}$};
\end{tikzpicture}.
\end{equation}

Next, we check that $JW_{\widehat{k}}$ is uncappable. We have assumed that $JW_{\widehat{k-1}}$ is uncappable and we know $JW_{2n-1}$ is uncappable. Thus, the only spot that remains to check is between strands $kn-n$ and $kn-n+1.$

We insert the cap and then replace the top right $JW_{2n-1}$ with the expression (\ref{JW_2n-1}). Since each green strand is $n$ parallel copies of a strand ultimately connected to a $JW$ projector, only the $A_{n-1}'$ term survives and we obtain the following:

\begin{equation*}
\begin{tikzpicture}[baseline=9ex]
\draw[ultra thick] (-.25,-.5)--(-.25,3.5);
\draw[ultra thick, color=ForestGreen] (-.25,1)--(1.4,1.5);
\draw[ultra thick, color=ForestGreen] (-.25,.75)--(1.4,1.5);
\draw[ultra thick, color=ForestGreen] (-.25,2.25)--(1.4,1.5);
\draw[ultra thick, color=ForestGreen] (-.25,2)--(1.4,1.5);
\draw[ultra thick] (.25,2.5)--(1.4,3);
\draw[ultra thick] (.25,.5)--(1.4,0);
\draw[ultra thick, color=ForestGreen] (1.4,0)--(1.4,3);
\draw[ultra thick] (1.4,3)--(1.4,3.5);
\draw[ultra thick] (1.4,0)--(1.4,-.5);
\draw[ultra thick] (1.2,3.25) arc (0:90:.25);
\draw[ultra thick] (.95,3.5)--(.5,3.5);
\draw[ultra thick] (.5,3.5) arc (90:180:.25);
\draw[ultra thick] (.25,2.25)--(.25,3.25);
\node (node) at (.75,3.5) [above] {$1$};
\node[draw, minimum width=.75cm, ultra thick, fill = white] (node) at (0,.75) {$\widehat{k-1}$};
\node[draw, minimum width=.75cm, ultra thick, fill = white] (node) at (0,2.25) {$\widehat{k-1}$};
\node [draw=ForestGreen, ellipse, minimum width=.75cm, ultra thick, fill=white] (node) at (1.4,1.5) {$k$};
\node[draw, minimum width=.75cm, ultra thick, fill = white] (node) at (1.4,3) {$2n-1$};
\node[draw, minimum width=.75cm, ultra thick, fill = white] (node) at (1.4,0) {$2n-1$};
\end{tikzpicture}=(-1)^{n-1}
\begin{tikzpicture}[baseline=9ex]
\draw[ultra thick] (-.25,-.5)--(-.25,3.5);
\draw[ultra thick, color=ForestGreen] (-.25,1)--(1.4,1.5);
\draw[ultra thick, color=ForestGreen] (-.25,.75)--(1.4,1.5);
\draw[ultra thick, color=ForestGreen] (-.25,2.25)--(1.4,1.5);
\draw[ultra thick, color=ForestGreen] (-.25,2)--(1.4,1.5);
\draw[ultra thick] (.25,.5)--(1.4,0);
\draw[ultra thick, color=ForestGreen] (1.4,0)--(1.4,1.5);
\draw[ultra thick] (1.4,3)--(1.4,3.5);
\draw[ultra thick] (1.4,0)--(1.4,-.5);
\draw[ultra thick, color=ForestGreen] (1.4,1.5)--(.5,2.75);
\draw[ultra thick, color=ForestGreen] (.5,2.75) arc (90:180:.25);
\draw[ultra thick] (1.4,2.75) arc (0:-90:.25);
\draw[ultra thick] (1.15,2.5)--(.9,2.5);
\draw[ultra thick] (.9,2.5) arc (-90:-180:.25);
\draw[ultra thick] (.65,2.75)--(.65,3.5);
\node[draw, minimum width=.75cm, ultra thick, fill = white] (node) at (0,.75) {$\widehat{k-1}$};
\node[draw, minimum width=.75cm, ultra thick, fill = white] (node) at (0,2.25) {$\widehat{k-1}$};
\node [draw=ForestGreen, ellipse, minimum width=.75cm, ultra thick, fill=white] (node) at (1.4,1.5) {$k$};
\node[draw, minimum width=.75cm, ultra thick, fill = white] (node) at (1.4,3) {$n-1$};
\node[draw, minimum width=.75cm, ultra thick, fill = white] (node) at (1.4,0) {$2n-1$};
\end{tikzpicture}.
\end{equation*}

We then apply the analogue of $JW$ recursion (\ref{JW recursion}) to $\widetilde{JW_{k}}$ and absorb the resulting $\widetilde{JW_{k-1}}$ elements into the $JW_{\widehat{k-1}}$ elements to yield

\begin{equation*}
(-1)^{n-1}\left(
\begin{tikzpicture}[baseline=9ex]
\draw[ultra thick] (-.25,-.5)--(-.25,3.5);
\draw[ultra thick] (.25,.5)--(1.4,0);
\draw[ultra thick, color=ForestGreen] (1.4,0)--(1.4,1.5);
\draw[ultra thick] (1.4,3)--(1.4,3.5);
\draw[ultra thick] (1.4,0)--(1.4,-.5);
\draw[ultra thick, color=ForestGreen] (1.4,1.5)--(.5,2.75);
\draw[ultra thick, color=ForestGreen] (.5,2.75) arc (90:180:.25);
\draw[ultra thick] (1.4,2.75) arc (0:-90:.25);
\draw[ultra thick] (1.15,2.5)--(.9,2.5);
\draw[ultra thick] (.9,2.5) arc (-90:-180:.25);
\draw[ultra thick] (.65,2.75)--(.65,3.5);
\node[draw, minimum width=.75cm, ultra thick, fill = white] (node) at (0,.75) {$\widehat{k-1}$};
\node[draw, minimum width=.75cm, ultra thick, fill = white] (node) at (0,2.25) {$\widehat{k-1}$};
\node[draw, minimum width=.75cm, ultra thick, fill = white] (node) at (1.4,3) {$n-1$};
\node[draw, minimum width=.75cm, ultra thick, fill = white] (node) at (1.4,0) {$2n-1$};
\end{tikzpicture}+\frac{[k-1]_t}{[k]_t}
\begin{tikzpicture}[baseline=9ex]
\draw[ultra thick] (-.25,-.5)--(-.25,3.5);
\draw[ultra thick] (.25,.5)--(1.4,0);
\draw[ultra thick, color=ForestGreen] (1.4,0)--(1.4,1);
\draw[ultra thick] (1.4,3)--(1.4,3.5);
\draw[ultra thick] (1.4,0)--(1.4,-.5);
\draw[ultra thick, color=ForestGreen] (.25,2.25) arc (180:0:.25);
\draw[ultra thick, color=ForestGreen] (.75,2.25)--(.75,1.7);
\draw[ultra thick, color=ForestGreen] (.75,1.7) arc (0:-180:.25);
\draw[ultra thick] (1.4,2.75) arc (0:-90:.25);
\draw[ultra thick] (1.15,2.5)--(.9,2.5);
\draw[ultra thick] (.9,2.5) arc (-90:-180:.25);
\draw[ultra thick] (.65,2.75)--(.65,3.5);
\draw[ultra thick, color=ForestGreen] (.25,1) arc (180:90:.25);
\draw[ultra thick, color=ForestGreen] (.5,1.25)--(1.15,1.25);
\draw[ultra thick, color=ForestGreen] (1.15,1.25) arc (90:0:.25);
\node[draw, minimum width=.75cm, ultra thick, fill = white] (node) at (0,.75) {$\widehat{k-1}$};
\node[draw, minimum width=.75cm, ultra thick, fill = white] (node) at (0,2) {$\widehat{k-1}$};
\node[draw, minimum width=.75cm, ultra thick, fill = white] (node) at (1.4,3) {$n-1$};
\node[draw, minimum width=.75cm, ultra thick, fill = white] (node) at (1.4,0) {$2n-1$};
\end{tikzpicture}
\right)=0.
\end{equation*}

To obtain zero we observe that the two elements simplify to the same element with opposite sign after applying equation (\ref{trace green strand}) and then absorbing $JW$ projectors. We conclude that $JW_{\widehat{k}}$ satisfies both axioms \ref{axiom 1} and \ref{axiom 2}.
\end{proof}

\section{Other Steinberg skein identities}

We conclude by showing skein identities relating the thick Jones-Wenzl elements $\widetilde{JW_k}$ to $JW_{n-1}$ and $JW_{\widehat{k}}$.

\begin{theorem}\label{other Steinberg skein identities}
Suppose $q$ is a root of unity and $n$ is the smallest positive integer such that $q^n \in \{-1,1\}.$ Then the following skein identites hold in the skein module.

\begin{equation}\label{further skein identities}
\begin{tikzpicture}[baseline=6ex]
\draw [ultra thick] (0,0)--(0,2);
\draw [ultra thick, color=ForestGreen] (1.4,0)--(1.4,2);
\draw [ultra thick, color=ForestGreen] (1.1,0)--(1.1,2);
\draw [ultra thick, color=ForestGreen] (1.7,0)--(1.7,2);
\node [draw, minimum width=.75cm, ultra thick, fill = white] (node) at (0,1) {$n-1$};
\node [draw=ForestGreen, ellipse, minimum width=.75cm, ultra thick, fill=white] (node) at (1.4,1) {$k$};
\end{tikzpicture}=
\begin{tikzpicture}[baseline=6ex]
\draw [ultra thick] (-.25,0)--(-.25,2);
\draw [ultra thick, color=ForestGreen] (.05,0)--(.05,2);
\draw [ultra thick, color=ForestGreen] (.35,0)--(.35,2);
\draw [ultra thick, color=ForestGreen] (.65,0)--(.65,2);
\node [draw, minimum width=1.4cm, ultra thick, fill = white] (node) at (0,1) {$\widehat{k}$};
\node (node) at (-.25,2) [above] {$n-1$};
\node (node) at (-.25,0) [below] {$n-1$};
\end{tikzpicture}.
\end{equation}
\end{theorem}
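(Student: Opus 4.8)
The plan is to argue by induction on $k$. The base case $k=1$ is precisely the green-strand form (\ref{(green strand Steinberg identity)}) of Theorem \ref{Steinberg skein identities}, since $\widehat{1}=2n-1$ and, by definition, $\widetilde{JW_1}$ is a single green strand. So assume that (\ref{further skein identities}) holds with $k$ replaced by $k-1$.

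For the inductive step I would begin with the right-hand side of (\ref{further skein identities}) and expand the projector $JW_{\widehat{k}}$ using the formula (\ref{JWhat}). That formula writes $JW_{\widehat{k}}$ in terms of two copies of $JW_{\widehat{k-1}}$, two copies of $JW_{2n-1}$, and one copy of $\widetilde{JW_k}$, with the leftmost $n-1$ strands running straight through the whole diagram. I would then rewrite each $JW_{\widehat{k-1}}$ box as $JW_{n-1}\otimes\widetilde{JW_{k-1}}$ using the inductive hypothesis, and rewrite each $JW_{2n-1}$ box as $JW_{n-1}$ tensored with a green strand using the base case (\ref{(green strand Steinberg identity)}). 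After these substitutions, the diagram contains four $JW_{n-1}$ boxes, all lying in a column along the same $n-1$ strands, so by the absorption property (\ref{absorption}) they collapse to a single $JW_{n-1}$; this peels off the $JW_{n-1}$ tensor factor of the desired left-hand side.

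It then remains to identify the leftover green diagram --- assembled from the two $\widetilde{JW_{k-1}}$ boxes produced by the inductive hypothesis, the two green strands produced by the base case, and the central $\widetilde{JW_k}$ --- with $\widetilde{JW_k}$ itself. I would do this using the fact that $\widetilde{JW_k}$ satisfies the analogues of axioms (\ref{axiom 1}) and (\ref{axiom 2}), together with the green trace relation (\ref{trace green strand}) and the absorption of green boxes, following essentially verbatim the computation already made in the proof of (\ref{JWhat}): the $\widetilde{JW_{k-1}}$'s are absorbed into $\widetilde{JW_k}$, capped terms vanish by uncappability, and closed green loops are evaluated by (\ref{trace green strand}); matching the coefficient of the identity strands shows the resulting scalar is $1$. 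Since these are just the parameter-$t$ cabled versions of the ordinary Jones-Wenzl manipulations, they go through without change, giving $JW_{\widehat k}=JW_{n-1}\otimes\widetilde{JW_k}$.

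The step I expect to be the main obstacle is this last identification of the green remainder: one has to check that, once the substitutions are made, the various $\widetilde{JW_{k-1}}$'s and green strands fit together so that the uncappability of $\widetilde{JW_k}$ --- essentially verified already while proving (\ref{JWhat}) --- forces everything to simplify to $\widetilde{JW_k}$ rather than to some other green diagram, and correspondingly that all four ordinary $JW_{n-1}$ boxes really do lie on a common block of $n-1$ strands so that absorption applies. This is largely a bookkeeping matter rather than a conceptual one. One further small point to keep in mind is that (\ref{further skein identities}) is a local identity whose green strands are open-ended; since every step above is a rewrite supported in a ball, the argument is insensitive to where those strands ultimately terminate, so this causes no difficulty.
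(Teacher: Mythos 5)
Your overall strategy---induction on $k$ with base case Theorem \ref{Steinberg skein identities}, expanding $JW_{\widehat{k}}$ via (\ref{JWhat}) and feeding in the inductive hypothesis and the $k=1$ case---is the paper's proof run from right to left, and that direction is in principle just as good. But the strand bookkeeping you set aside as routine is exactly where your argument breaks, and the specific claims you make about it are false. In (\ref{JWhat}) the $n-1$ strands joining each $JW_{\widehat{k-1}}$ to its adjacent $JW_{2n-1}$ are the \emph{rightmost} legs of $JW_{\widehat{k-1}}$, and the corresponding $n-1$ outer legs of that $JW_{2n-1}$ are constituents of the $(k-1)$-st of the $k$ boundary green strands (whose remaining constituent stays attached to $JW_{\widehat{k-1}}$); the distinguished $n-1$ black strands of (\ref{further skein identities}) are the far-left legs passing only through the two $JW_{\widehat{k-1}}$ boxes. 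Consequently: (i) the inductive hypothesis does not apply to the $JW_{\widehat{k-1}}$ boxes as they sit in (\ref{JWhat}), because on their outer side the would-be $(k-1)$-st green strand consists of one leg running to an external projector together with $n-1$ legs running to the neighboring $JW_{2n-1}$, which is not $n$ parallel strands ending on a common projector; and (ii) the two $JW_{n-1}$ boxes produced by undoing the base case on the $JW_{2n-1}$'s sit on those rightmost legs, one at the top of the diagram and one at the bottom, on entirely different strands from the two $JW_{n-1}$'s the inductive hypothesis would produce---so the four boxes do \emph{not} lie on a common block of $n-1$ strands and do not collapse to one by (\ref{absorption}).

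The repair is a reordering, and it recovers the paper's proof read backwards: after undoing the base case on each $JW_{2n-1}$, immediately absorb the resulting $JW_{n-1}$ back into the adjacent $JW_{\widehat{k-1}}$ using (\ref{absorption}) (the inverse of the paper's step of ``pulling $JW_{n-1}$ out of the rightmost parts of $JW_{\widehat{k-1}}$''). Only then do all $kn-1$ outer legs of each $JW_{\widehat{k-1}}$ organize as $n-1$ black legs plus $k-1$ honest green strands, so the inductive hypothesis applies and yields $JW_{n-1}\otimes\widetilde{JW_{k-1}}$ with the two $JW_{n-1}$'s genuinely stacked on the common black block. The leftover green diagram is then literally $\widetilde{JW_{k-1}}\cdot\widetilde{JW_k}\cdot\widetilde{JW_{k-1}}$ in the absorption configuration, so your final identification needs only the green analogue of (\ref{absorption}); the machinery you propose for it (uncappability, the trace relation (\ref{trace green strand}), coefficient matching) is unnecessary once the preceding steps are aligned correctly.
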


The $k=1$ case was already shown in Theorem \ref{Steinberg skein identities}, which contained essentially two different identities. Similarly, for each $k$ the identities of Theorem \ref{other Steinberg skein identities} actually describe many skein identities in $\Sq(M).$

\begin{proof}
We proceed by induction. The base case of $k=1$ was shown in Theorem \ref{Steinberg skein identities}. We inductively assume that (\ref{further skein identities}) holds for the $k-1$ case. To complete the proof we use the absorption property on $\widetilde{JW_k}$ and then use Equation (\ref{further skein identities}) on $\widetilde{JW_{k-1}}$ in the following way.

\begin{equation}
\centerv { \def\svgscale{0.6}
\begingroup%
  \makeatletter%
  \providecommand\color[2][]{%
    \errmessage{(Inkscape) Color is used for the text in Inkscape, but the package 'color.sty' is not loaded}%
    \renewcommand\color[2][]{}%
  }%
  \providecommand\transparent[1]{%
    \errmessage{(Inkscape) Transparency is used (non-zero) for the text in Inkscape, but the package 'transparent.sty' is not loaded}%
    \renewcommand\transparent[1]{}%
  }%
  \providecommand\rotatebox[2]{#2}%
  \newcommand*\fsize{\dimexpr\f@size pt\relax}%
  \newcommand*\lineheight[1]{\fontsize{\fsize}{#1\fsize}\selectfont}%
  \ifx\svgwidth\undefined%
    \setlength{\unitlength}{129.80940331bp}%
    \ifx\svgscale\undefined%
      \relax%
    \else%
      \setlength{\unitlength}{\unitlength * \real{\svgscale}}%
    \fi%
  \else%
    \setlength{\unitlength}{\svgwidth}%
  \fi%
  \global\let\svgwidth\undefined%
  \global\let\svgscale\undefined%
  \makeatother%
  \begin{picture}(1,1.20103412)%
    \lineheight{1}%
    \setlength\tabcolsep{0pt}%
    \put(0,0){\includegraphics[width=\unitlength,page=1]{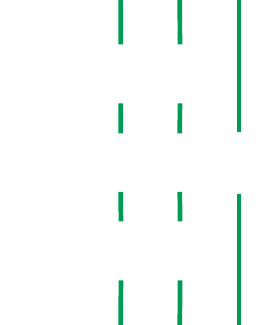}}%
    \put(0.02697905,0.90528112){\color[rgb]{0,0,0}\makebox(0,0)[lt]{\lineheight{1.25}\smash{\begin{tabular}[t]{l}$\scriptscriptstyle n-1$\end{tabular}}}}%
    \put(0,0){\includegraphics[width=\unitlength,page=2]{4.2-hS-step1.pdf}}%
    \put(0.02697905,0.25017172){\color[rgb]{0,0,0}\makebox(0,0)[lt]{\lineheight{1.25}\smash{\begin{tabular}[t]{l}$\scriptscriptstyle n-1$\end{tabular}}}}%
    \put(0,0){\includegraphics[width=\unitlength,page=3]{4.2-hS-step1.pdf}}%
    \put(0.5451399,0.55389174){\color[rgb]{0,0,0}\makebox(0,0)[lt]{\lineheight{1.25}\smash{\begin{tabular}[t]{l}$\scriptstyle k$\end{tabular}}}}%
    \put(0,0){\includegraphics[width=\unitlength,page=4]{4.2-hS-step1.pdf}}%
    \put(0.43357814,0.88727209){\color[rgb]{0,0,0}\makebox(0,0)[lt]{\lineheight{1.25}\smash{\begin{tabular}[t]{l}$\scriptstyle k-1$\end{tabular}}}}%
    \put(0,0){\includegraphics[width=\unitlength,page=5]{4.2-hS-step1.pdf}}%
    \put(0.43416167,0.23283136){\color[rgb]{0,0,0}\makebox(0,0)[lt]{\lineheight{1.25}\smash{\begin{tabular}[t]{l}$\scriptstyle k-1$\end{tabular}}}}%
    \put(0,0){\includegraphics[width=\unitlength,page=6]{4.2-hS-step1.pdf}}%
    \put(0.46743304,1.11366422){\color[rgb]{0,0,0}\makebox(0,0)[lt]{\lineheight{1.25}\smash{\begin{tabular}[t]{l}\textcolor{ForestGreen}{$\scriptstyle \ldots$}\end{tabular}}}}%
    \put(0.46567863,0.7622782){\color[rgb]{0,0,0}\makebox(0,0)[lt]{\lineheight{1.25}\smash{\begin{tabular}[t]{l}\textcolor{ForestGreen}{$\scriptstyle \ldots$}\end{tabular}}}}%
    \put(0.46312298,0.43430158){\color[rgb]{0,0,0}\makebox(0,0)[lt]{\lineheight{1.25}\smash{\begin{tabular}[t]{l}\textcolor{ForestGreen}{$\scriptstyle \ldots$}\end{tabular}}}}%
    \put(0.46739393,0.09945197){\color[rgb]{0,0,0}\makebox(0,0)[lt]{\lineheight{1.25}\smash{\begin{tabular}[t]{l}\textcolor{ForestGreen}{$\scriptstyle \ldots$}\end{tabular}}}}%
  \end{picture}%
\endgroup%

}\overset{(\ref{further skein identities})}{=}
\centerv { \def\svgscale{0.6}
\begingroup%
  \makeatletter%
  \providecommand\color[2][]{%
    \errmessage{(Inkscape) Color is used for the text in Inkscape, but the package 'color.sty' is not loaded}%
    \renewcommand\color[2][]{}%
  }%
  \providecommand\transparent[1]{%
    \errmessage{(Inkscape) Transparency is used (non-zero) for the text in Inkscape, but the package 'transparent.sty' is not loaded}%
    \renewcommand\transparent[1]{}%
  }%
  \providecommand\rotatebox[2]{#2}%
  \newcommand*\fsize{\dimexpr\f@size pt\relax}%
  \newcommand*\lineheight[1]{\fontsize{\fsize}{#1\fsize}\selectfont}%
  \ifx\svgwidth\undefined%
    \setlength{\unitlength}{115.63617496bp}%
    \ifx\svgscale\undefined%
      \relax%
    \else%
      \setlength{\unitlength}{\unitlength * \real{\svgscale}}%
    \fi%
  \else%
    \setlength{\unitlength}{\svgwidth}%
  \fi%
  \global\let\svgwidth\undefined%
  \global\let\svgscale\undefined%
  \makeatother%
  \begin{picture}(1,1.74045745)%
    \lineheight{1}%
    \setlength\tabcolsep{0pt}%
    \put(0,0){\includegraphics[width=\unitlength,page=1]{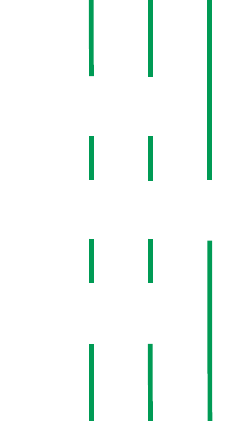}}%
    \put(0.04593196,1.24560571){\color[rgb]{0,0,0}\makebox(0,0)[lt]{\lineheight{1.25}\smash{\begin{tabular}[t]{l}$\scriptstyle \widehat{k-1}$\end{tabular}}}}%
    \put(0,0){\includegraphics[width=\unitlength,page=2]{4.2-hS-step2.pdf}}%
    \put(0.48938886,0.81788867){\color[rgb]{0,0,0}\makebox(0,0)[lt]{\lineheight{1.25}\smash{\begin{tabular}[t]{l}$\scriptstyle k$\end{tabular}}}}%
    \put(0,0){\includegraphics[width=\unitlength,page=3]{4.2-hS-step2.pdf}}%
    \put(0.04593196,0.38763354){\color[rgb]{0,0,0}\makebox(0,0)[lt]{\lineheight{1.25}\smash{\begin{tabular}[t]{l}$\scriptstyle \widehat{k-1}$\end{tabular}}}}%
    \put(0,0){\includegraphics[width=\unitlength,page=4]{4.2-hS-step2.pdf}}%
    \put(0.07966363,0.70818263){\color[rgb]{0,0,0}\transparent{0}\rotatebox{90}{\makebox(0,0)[lt]{\lineheight{1.25}\smash{\begin{tabular}[t]{l}$\scriptstyle n-1$\end{tabular}}}}}%
    \put(0.06977684,1.49770044){\color[rgb]{0,0,0}\transparent{0}\rotatebox{90}{\makebox(0,0)[lt]{\lineheight{1.25}\smash{\begin{tabular}[t]{l}$\scriptstyle n-1$\end{tabular}}}}}%
    \put(0.07947473,0.00100621){\color[rgb]{0,0,0}\transparent{0}\rotatebox{90}{\makebox(0,0)[lt]{\lineheight{1.25}\smash{\begin{tabular}[t]{l}$\scriptstyle n-1$\end{tabular}}}}}%
    \put(0.42324872,1.56354284){\color[rgb]{0,0,0}\makebox(0,0)[lt]{\lineheight{1.25}\smash{\begin{tabular}[t]{l}\textcolor{ForestGreen}{$\scriptstyle \ldots$}\end{tabular}}}}%
    \put(0.42134745,0.20139991){\color[rgb]{0,0,0}\makebox(0,0)[lt]{\lineheight{1.25}\smash{\begin{tabular}[t]{l}\textcolor{ForestGreen}{$\scriptstyle \ldots$}\end{tabular}}}}%
    \put(0.43127837,1.07813835){\color[rgb]{0,0,0}\makebox(0,0)[lt]{\lineheight{1.25}\smash{\begin{tabular}[t]{l}\textcolor{ForestGreen}{$\scriptstyle \ldots$}\end{tabular}}}}%
    \put(0.42617171,0.65644738){\color[rgb]{0,0,0}\makebox(0,0)[lt]{\lineheight{1.25}\smash{\begin{tabular}[t]{l}\textcolor{ForestGreen}{$\scriptstyle \ldots$}\end{tabular}}}}%
  \end{picture}%
\endgroup%

} \overset{(\ref{absorption})}{=} 
\centerv { \def\svgscale{0.6}
\begingroup%
  \makeatletter%
  \providecommand\color[2][]{%
    \errmessage{(Inkscape) Color is used for the text in Inkscape, but the package 'color.sty' is not loaded}%
    \renewcommand\color[2][]{}%
  }%
  \providecommand\transparent[1]{%
    \errmessage{(Inkscape) Transparency is used (non-zero) for the text in Inkscape, but the package 'transparent.sty' is not loaded}%
    \renewcommand\transparent[1]{}%
  }%
  \providecommand\rotatebox[2]{#2}%
  \newcommand*\fsize{\dimexpr\f@size pt\relax}%
  \newcommand*\lineheight[1]{\fontsize{\fsize}{#1\fsize}\selectfont}%
  \ifx\svgwidth\undefined%
    \setlength{\unitlength}{129.80940331bp}%
    \ifx\svgscale\undefined%
      \relax%
    \else%
      \setlength{\unitlength}{\unitlength * \real{\svgscale}}%
    \fi%
  \else%
    \setlength{\unitlength}{\svgwidth}%
  \fi%
  \global\let\svgwidth\undefined%
  \global\let\svgscale\undefined%
  \makeatother%
  \begin{picture}(1,1.85614335)%
    \lineheight{1}%
    \setlength\tabcolsep{0pt}%
    \put(0,0){\includegraphics[width=\unitlength,page=1]{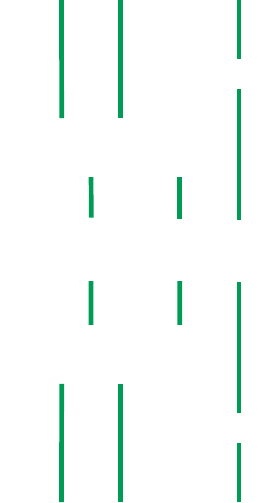}}%
    \put(0.25201516,1.27095372){\color[rgb]{0,0,0}\makebox(0,0)[lt]{\lineheight{1.25}\smash{\begin{tabular}[t]{l}$\scriptstyle \widehat{k-1}$\end{tabular}}}}%
    \put(0,0){\includegraphics[width=\unitlength,page=2]{4.2-hS-step3.pdf}}%
    \put(0.51219599,0.89513019){\color[rgb]{0,0,0}\makebox(0,0)[lt]{\lineheight{1.25}\smash{\begin{tabular}[t]{l}$\scriptstyle k$\end{tabular}}}}%
    \put(0,0){\includegraphics[width=\unitlength,page=3]{4.2-hS-step3.pdf}}%
    \put(0.25201516,0.50665908){\color[rgb]{0,0,0}\makebox(0,0)[lt]{\lineheight{1.25}\smash{\begin{tabular}[t]{l}$\scriptstyle \widehat{k-1}$\end{tabular}}}}%
    \put(0,0){\includegraphics[width=\unitlength,page=4]{4.2-hS-step3.pdf}}%
    \put(0.07963208,0.78371863){\color[rgb]{0,0,0}\transparent{0}\rotatebox{90}{\makebox(0,0)[lt]{\lineheight{1.25}\smash{\begin{tabular}[t]{l}$\scriptstyle n-1$\end{tabular}}}}}%
    \put(0.07100121,1.52548255){\color[rgb]{0,0,0}\transparent{0}\rotatebox{90}{\makebox(0,0)[lt]{\lineheight{1.25}\smash{\begin{tabular}[t]{l}$\scriptstyle n-1$\end{tabular}}}}}%
    \put(0.08100306,0.06557692){\color[rgb]{0,0,0}\transparent{0}\rotatebox{90}{\makebox(0,0)[lt]{\lineheight{1.25}\smash{\begin{tabular}[t]{l}$\scriptstyle n-1$\end{tabular}}}}}%
    \put(0,0){\includegraphics[width=\unitlength,page=5]{4.2-hS-step3.pdf}}%
    \put(0.68357879,1.55798154){\color[rgb]{0,0,0}\makebox(0,0)[lt]{\lineheight{1.25}\smash{\begin{tabular}[t]{l}$\scriptscriptstyle 2n-1$\end{tabular}}}}%
    \put(0,0){\includegraphics[width=\unitlength,page=6]{4.2-hS-step3.pdf}}%
    \put(0.68357879,0.24776258){\color[rgb]{0,0,0}\makebox(0,0)[lt]{\lineheight{1.25}\smash{\begin{tabular}[t]{l}$\scriptscriptstyle 2n-1$\end{tabular}}}}%
    \put(0,0){\includegraphics[width=\unitlength,page=7]{4.2-hS-step3.pdf}}%
    \put(0.26785144,1.63105155){\color[rgb]{0,0,0}\makebox(0,0)[lt]{\lineheight{1.25}\smash{\begin{tabular}[t]{l}\textcolor{ForestGreen}{$\scriptstyle \ldots$}\end{tabular}}}}%
    \put(0,0){\includegraphics[width=\unitlength,page=8]{4.2-hS-step3.pdf}}%
    \put(0.52082253,1.50037257){\color[rgb]{0,0,0}\makebox(0,0)[lt]{\lineheight{1.25}\smash{\begin{tabular}[t]{l}$\scriptscriptstyle 1$\end{tabular}}}}%
    \put(0,0){\includegraphics[width=\unitlength,page=9]{4.2-hS-step3.pdf}}%
    \put(0.27181759,0.19660432){\color[rgb]{0,0,0}\makebox(0,0)[lt]{\lineheight{1.25}\smash{\begin{tabular}[t]{l}\textcolor{ForestGreen}{$\scriptstyle \ldots$}\end{tabular}}}}%
    \put(0.52191195,0.32627989){\color[rgb]{0,0,0}\makebox(0,0)[lt]{\lineheight{1.25}\smash{\begin{tabular}[t]{l}$\scriptscriptstyle 1$\end{tabular}}}}%
    \put(0.43390993,1.11793182){\color[rgb]{0,0,0}\makebox(0,0)[lt]{\lineheight{1.25}\smash{\begin{tabular}[t]{l}\textcolor{ForestGreen}{$\scriptstyle \ldots$}\end{tabular}}}}%
    \put(0.42296545,0.73556188){\color[rgb]{0,0,0}\makebox(0,0)[lt]{\lineheight{1.25}\smash{\begin{tabular}[t]{l}\textcolor{ForestGreen}{$\scriptstyle \ldots$}\end{tabular}}}}%
    \put(0,0){\includegraphics[width=\unitlength,page=10]{4.2-hS-step3.pdf}}%
  \end{picture}%
\endgroup%

}\overset{(\ref{JWhat})}{=} 
\centerv { \def\svgscale{0.6}
\begingroup%
  \makeatletter%
  \providecommand\color[2][]{%
    \errmessage{(Inkscape) Color is used for the text in Inkscape, but the package 'color.sty' is not loaded}%
    \renewcommand\color[2][]{}%
  }%
  \providecommand\transparent[1]{%
    \errmessage{(Inkscape) Transparency is used (non-zero) for the text in Inkscape, but the package 'transparent.sty' is not loaded}%
    \renewcommand\transparent[1]{}%
  }%
  \providecommand\rotatebox[2]{#2}%
  \newcommand*\fsize{\dimexpr\f@size pt\relax}%
  \newcommand*\lineheight[1]{\fontsize{\fsize}{#1\fsize}\selectfont}%
  \ifx\svgwidth\undefined%
    \setlength{\unitlength}{101.46328183bp}%
    \ifx\svgscale\undefined%
      \relax%
    \else%
      \setlength{\unitlength}{\unitlength * \real{\svgscale}}%
    \fi%
  \else%
    \setlength{\unitlength}{\svgwidth}%
  \fi%
  \global\let\svgwidth\undefined%
  \global\let\svgscale\undefined%
  \makeatother%
  \begin{picture}(1,0.97782841)%
    \lineheight{1}%
    \setlength\tabcolsep{0pt}%
    \put(0,0){\includegraphics[width=\unitlength,page=1]{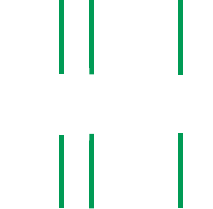}}%
    \put(0.10215888,0.70610403){\color[rgb]{0,0,0}\transparent{0}\rotatebox{90}{\makebox(0,0)[lt]{\lineheight{1.25}\smash{\begin{tabular}[t]{l}$\scriptscriptstyle n-1$\end{tabular}}}}}%
    \put(0.45325563,0.43477746){\color[rgb]{0,0,0}\makebox(0,0)[lt]{\lineheight{1.25}\smash{\begin{tabular}[t]{l}$\scriptstyle \widehat{k}$\end{tabular}}}}%
    \put(0,0){\includegraphics[width=\unitlength,page=2]{4.2-hS-end.pdf}}%
    \put(0.10279235,0.03879359){\color[rgb]{0,0,0}\transparent{0}\rotatebox{90}{\makebox(0,0)[lt]{\lineheight{1.25}\smash{\begin{tabular}[t]{l}$\scriptscriptstyle n-1$\end{tabular}}}}}%
    \put(0.58084265,0.72524552){\color[rgb]{0,0,0}\makebox(0,0)[lt]{\lineheight{1.25}\smash{\begin{tabular}[t]{l}\textcolor{ForestGreen}{$\scriptstyle \ldots$}\end{tabular}}}}%
    \put(0.58466311,0.18383397){\color[rgb]{0,0,0}\makebox(0,0)[lt]{\lineheight{1.25}\smash{\begin{tabular}[t]{l}\textcolor{ForestGreen}{$\scriptstyle \ldots$}\end{tabular}}}}%
  \end{picture}%
\endgroup%

}.
\end{equation}

The second equality follows from pulling $JW_{n-1}$ out of the rightmost parts of $JW_{\widehat{k-1}}$ and the third equality follows from the $k=1$ case of Equation (\ref{further skein identities}).
\end{proof}

\bibliographystyle{amsalpha}
\bibliography{references}
\end{document}